\newcommand{\dbar}{\ensuremath{\overline\partial}}
\newcommand\C{{\mathbb C}}
\newcommand\R{{\mathbb R}}
\newcommand\Z{{\mathbb Z}}
\newcommand\Span{{\hbox{\em Span}}}
\newcommand{\del}{\partial}
\newcommand{\delbar}{\overline{\del}}
\newcommand{\sumprime}{\if@display\sideset{}{'}\sum%
            \else\sum'\fi}
\begin{document}

\numberwithin{equation}{section}

\newtheorem{theorem}{Theorem}[section]
\newtheorem{proposition}[theorem]{Proposition}
\newtheorem{conjecture}[theorem]{Conjecture}
\def\theconjecture{\unskip}
\newtheorem{corollary}[theorem]{Corollary}
\newtheorem{lemma}[theorem]{Lemma}
\newtheorem{observation}[theorem]{Observation}
\newtheorem{definition}[theorem]{Definition}
\newtheorem{remark}[theorem]{Remark}
\def\theremark{\unskip}
\newtheorem{kl}{Key Lemma}
\def\thekl{\unskip}
\newtheorem{question}{Question}
\def\thequestion{\unskip}
\newtheorem{example}{Example}
\def\theexample{\unskip}
\newtheorem{problem}{Problem}


\address{Dipartimento di Scienze Matematiche, Fisiche e Informatiche,
Unit\`{a} di Matematica e Informatica\\
Universit\`{a} degli Studi di Parma\\
Parco Area delle Scienze 53/A, 43124\\
Parma, Italy}
\email{adriano.tomassini@unipr.it}
\address{Department of Mathematical Sciences, Norwegian University of Science and Technology, NO-7491 Trondheim, Norway}
\email{xu.wang@ntnu.no}

\keywords{Symplectic $(1,1)$ form; Hard Lefschetz Condition; Hodge Theory; Lefschetz space; 
$\overline{\partial}\,\overline{\partial}^\Lambda$-Lemma; Nakamura manifold; Kodaira-Thurston manifold}
\thanks{The first author is partially supported by the Project PRIN ``Varietà reali e complesse: geometria, topologia e analisi armonica'', Project PRIN 2017 ``Real and Complex Manifolds: Topology, Geometry and holomorphic dynamics''
and by GNSAGA of INdAM}
\subjclass[2010]{53C25; 53C55} 

\title{Cohomologies of complex manifolds with symplectic $(1,1)$-forms}

 \author{Adriano Tomassini and Xu Wang}
\date{\today}

\begin{abstract} Let $(X, J)$ be a complex manifold with a non-degenerated smooth $d$-closed $(1,1)$-form $\omega$. Then we have a natural double complex $\dbar+\dbar^\Lambda$, where $\dbar^\Lambda$ denotes the symplectic adjoint of the $\dbar$-operator. We study the Hard Lefschetz Condition on the Dolbeault cohomology groups of $X$ with respect to the symplectic form $\omega$. In \cite{TW}, we proved that such a condition is equivalent to a certain symplectic analogous of the $\partial\dbar$-Lemma, namely the $\dbar\, \dbar^\Lambda$-Lemma, which can be characterized in terms of Bott--Chern and Aeppli cohomologies associated to the above double complex. We obtain Nomizu type theorems for the Bott--Chern and Aeppli cohomologies and we show that the $\dbar\, \dbar^\Lambda$-Lemma is stable under small deformations of $\omega$, but not stable under small deformations of the complex structure. However, if we  further assume that $X$ satisfies the $\partial\dbar$-Lemma then  the $\dbar\, \dbar^\Lambda$-Lemma is stable.

\bigskip

\end{abstract}
\maketitle

\tableofcontents

\section{Introduction}\label{intro} 

It is known that the de Rham cohomology of a compact K\"ahler manifold satisfies two crucial properties: Hodge decomposition and Hard Lefschetz Condition, which do not hold for a general compact complex manifold. A natural question is to find a formal algebraic description of the above two properties. The first breakthrough is due to Fr\"olicher \cite{Fr} who proved that the first property is equivalent to that the Fr\"olicher spectral sequence degenerates at $E_1$, in particular, every compact surface satisfies the Hodge decomposition property.  In \cite{DGMS} Deligne--Griffiths--Morgan--Sullivan introduced the stronger notion of \emph{$\partial\dbar$-Lemma}, which turns out to be equivalent to the fact that the de Rham cohomology possesses both the Hodge decomposition property and the Hodge structure (see \cite[Proposition 5.12]{DGMS}); furhermore, they proved that every compact K\"ahler manifold satisfies the \emph{$\partial\dbar$-Lemma}. From \cite{Ma, Br, Merkulov, Yan}, we know that the Hard Lefschetz Condition on the de Rham cohomology  is essentially an integrability condition (the \emph{$dd^\Lambda$-Lemma}) on the associated differentiable Gerstenhaber-Batalin-Vilkovisky algebra. In particular, every compact K\"ahler manifold satisfies the $dd^\Lambda$-Lemma. For a general compact complex manifold, we know from the main theorem in \cite{AT13} that the $\partial\dbar$-Lemma is equivalent to a Fr\"olicher-type equality for Bott--Chern and Aeppli cohomologies. In  \cite[Def. 8.3]{TW}, we introduced the $\overline{\partial}\,\overline{\partial}^\Lambda$-{\em Lemma} and proved that it is equivalent to the Hard Lefschetz Condition on the \emph{Dolbeault cohomology group}. More precisely, let $(X, J)$ be a compact complex manifold with a symplectic $(1,1)$-form $\omega$. Denote by $\Lambda$ the \emph{symplectic adjoint} of  $L:=\omega \wedge$ (see \eqref{eq:sl2}), which satisfies $(\dbar^\Lambda)^2=(\dbar+\dbar^\Lambda)^2=0$. Then $(X, \omega, J)$ satisfies the $\overline{\partial}\,\overline{\partial}^\Lambda$-{\em Lemma} if every $\dbar$-closed, $\dbar^\Lambda$-closed, $\dbar+\dbar^\Lambda$-exact complex form is $\overline{\partial}\,\overline{\partial}^\Lambda$-exact. It has to be remarked that the $\overline{\partial}\,\overline{\partial}^\Lambda$-{\em Lemma} is a generalization of the $\overline{\partial}\partial^*$-Lemma on compact K\"ahler manifolds (see Sec. \ref{d-dbar-lemma}). Cohomologies associated to the  $\overline{\partial}\,\overline{\partial}^\Lambda$-{\em Lemma} are  complex symplectic Bott--Chern and Aeppli cohomologies (see Def. \ref{de:2.1}). In this paper, we shall show how to compute the above complex symplectic cohomologies and use them to study the deformation property of the $\dbar\,\dbar^\Lambda$-Lemma. Our first result is

\smallskip

\noindent {\bfseries Theorem A}{\bfseries.}\
{\itshape 
Let $(X, J)$ be a compact complex manifold with a symplectic $(1,1)$-form $\omega$. Write $H_\sharp(X):=\oplus H_\sharp^{p,q}(X)$ for $\sharp\in \{\dbar, \dbar^\Lambda, BC, A\}$ (see Def. \ref{de:2.1}). Then
\begin{enumerate}
 \item $H_{BC}(X)$ and $H_A(X)$ satisfy the Hard Lefschetz Condition with respect to $L$.
 \item With respect to an admissible Hermitian metric (see Definition \ref{de:2.5}),  both the space of $\triangle_{BC}$-harmonic forms $\mathcal{H}_{BC}(X)$ and the space of   $\triangle_{A}$-harmonic forms $\mathcal{H}_{A}(X)$ satisfy the Hard Lefschetz Condition with respect to $L$. But in general, $\mathcal{H}_{BC}(X)$ and $\mathcal{H}_{A}(X)$ are not an algebra with respect to the wedge product. In fact, the Kodaira--Thurston manifold in section \ref{se:kt} will give a counterexample.
 \item The the Kodaira--Thurston manifold in section \ref{se:kt} and the Iwasawa manifold in section \ref{se:Iwa} do not satisfy the $\overline{\partial}\,\overline{\partial}^\Lambda$-Lemma.
 \end{enumerate}
}
\smallskip

Theorem A (1) and (2) depend on a study of the harmonic representative of a complex symplectic cohomology class in Sec. \ref{harmonic} and \ref{se:kid} (see \cite{TY} for the real case). The main ingredient behind the proof of  Theorem A (2) is a certain Minkowski type K\"ahler identity associated to suitable Hermitian metric (see Def. \ref{de:2.5}). The proof of Theorem A (3) depends on an explicit computation of the associated cohomology group. The main idea is to to prove the following Nomizu type theorem (see \cite{Nomizu54, CF, CFK, CFGU, CFGU2, FRR, K, rollenske09b, rollenske10, Sa} for related results).

\smallskip

\noindent {\bfseries Theorem B}{\bfseries.}\
{\itshape 
Let $(X,J)$ be a compact complex manifold. Assume that its holomorphic cotangent bundle possesses a smooth global frame $\Psi=\{\xi^1, \cdots, \xi^n\}$. Let 
$$\omega=i\sum \omega_{j\bar k} \, \xi^j \wedge\overline{\xi^k}
$$ be a symplectic form on $X$ with constant coefficients $\omega_{j\bar k}$. Write $H_\sharp(X):=\oplus H_\sharp^{p,q}(X)$ for $\sharp\in \{\dbar, \dbar^\Lambda, BC, A\}$ (see Def. \ref{de:2.1}). Assume that  $
H_{\dbar}(X) $ is $\Psi$ reduced (see Definition \ref{de:psi-reduced}), then $H_{\sharp}(X)$ are also $\Psi$ reduced for $\sharp\in \{\dbar^\Lambda, BC, A\}$. 
In particular, if $\Psi$ is complex nilpotent (see \cite{RTW}) then $H_{\sharp}(X) $ are $\Psi$ reduced for $\sharp\in \{\dbar, \dbar^\Lambda, BC, A\}$.
}
\smallskip

The above theorem can be used to prove the following deformation property of the $\dbar\,\dbar^\Lambda$-Lemma and the Dolbeault formality (see \cite{TT} and Sec. \ref{se:defor} for the definition):

\medskip

\noindent {\bfseries Theorem C}{\bfseries.}\
{\itshape  Let $(X, J)$ be a compact complex manifold with a symplectic $(1,1)$-form $\omega$. Then 
\begin{enumerate}
\item $\overline{\partial}\,\overline{\partial}^\Lambda$-Lemma is stable with respect to $\omega$, more precisely, let $\{\omega_{t}\}_{|t|<1}$ be a smooth family of symplectic $(1,1)$-forms, if the $\overline{\partial}\,\overline{\partial}^\Lambda$-Lemma holds for $\omega_0$ then it holds for all $\omega_t$ with sufficiently small $|t|$;
\item  If $X$ satisfies the $\partial\dbar$-Lemma and $\dbar\,\dbar^\Lambda$-Lemma then so does any small deformation of $X$;
\item There exists a complex analytic family of three dimensional Nakamura manifolds such that 
the central fiber is geometrically Dolbeault formal and satisfies the $\dbar\,\dbar^\Lambda$-Lemma, but all the nearby fibers are not Dolbeault formal neither satisfy the $\dbar\,\dbar^\Lambda$-Lemma. In particular, the $\dbar\,\dbar^\Lambda$-Lemma is not a stable property under small deformations of the complex structure.
\end{enumerate}
} 

\medskip
The paper is organized as follows: in Section \ref{preliminaries} we start by recalling some facts on complex and symplectic geometry, introducing the complex symplectic cohomologies $H^{\bullet,\bullet}_{\dbar^\Lambda}(X)$, $H^{\bullet,\bullet}_{BC}(X)$, $H^{\bullet,\bullet}_{A}(X)$, and fixing some notation. In Section \ref{harmonic}, by using standard techniques, we prove a Hodge decomposition for the differential operators $\Box_{\dbar^\Lambda}$, $\triangle_{BC}$ and $\triangle_{A}$ naturally associated to the complex symplectic cohomologies. In Section \ref{d-dbar-lemma}, by applying a result in \cite[Theorem 3.4]{AT}, we give a characterization of the $\dbar\dbar^{\Lambda}$-Lemma in terms of the complex symplectic cohomologies (see Theorem \ref{d-dbar}). In Section \ref{se:kid} we prove a K\"ahler identity of Minkowsky type for complex manifolds endowed with a symplectic $(1,1)$-form admitting an admissible Hermitian metric. As a consequence, we obtain that the direct sum of the spaces of $\triangle_{BC}$-harmonic $(p,q)$-forms associated to an admissible Hermitian metric on a compact complex manifold satisfies the Hard Lefschetz Condition (see Theorem \ref{BC-HLC}). Sections \ref{se.AB} and \ref{se:defor} are devoted to the proof of Theorems A, B and C. 

\medskip

\textbf{Remark}:  Theorem C (1) suggests to study the following question:

\medskip

\textbf{Question 1}: \emph{Whether the Hard Lefschetz Condition on the Dolbeault cohomology group depends on the choice of symplectic $(1,1)$-forms or not? In particular, does the Hard Lefschetz Condition hold true with respect to any symplectic $(1,1)$-forms on a compact K\"ahler manifold? }

\medskip

\textbf{Remark}: It is known that the Hard Lefschetz Condition on the Dolbeault cohomology group does depend on the choice of symplectic (might not be $(1,1)$) form (see \cite[Theorem 1.3]{cho}), thus we believe that answer is "No"  to Question 1. But we could not find a counterexample.

\medskip

From (2) in the above theorem, one might also ask the following:

\medskip

\textbf{Question 2}: \emph{Let $(X, J)$ be a compact complex manifold with a symplectic $(1,1)$-form $\omega$. Does the  $\partial\dbar$-Lemma imply the $\dbar\,\dbar^\Lambda$-Lemma?}

\medskip

\textbf{Remark}: If the answer to Question 1 is "No" for some compact K\"ahler manifold then the answer to Question 2 is also "No", since every compact K\"ahler manifold satisfies the $\partial\dbar$-Lemma. 
\vskip.2truecm\indent
\noindent{\sl Acknowledgments.} The first author would like to thank the Department of Mathematical Sciences of Norwegian University of Science and Technology, Trondheim, for its warm hospitality.

\section{Preliminaries and notation}\label{preliminaries}
Let $(X,J)$ an $n$-dimensional compact complex manifold. Denote by $A^{p,q}(X)$ the space of $(p,q)$-forms on $X$. A $(1,1)$-{\em symplectic form} on $(X,J)$ is a symplectic form $\omega$ of type 
$(1,1)$ on $(X,J)$, that is $\omega$ is a symplectic form on $X$ which is $J$-invariant. Locally one may write $\omega=i \sum \omega_{j\bar k} d\xi^j \wedge d\bar \xi ^k$. Denote by $(\omega^{-1})^{\bar{r}j}$ the inverse matrix of $(\omega_{j\bar k} )$. Then for any given $\varphi$, $\psi\in A^{p,q}(X)$, one may define
$$
\omega^{-1}(\varphi,\psi):=\frac{1}{p!q!}\sum (\omega^{-1})^{\bar{r}_1j_1}\cdots 
(\omega^{-1})^{\bar{k}_1s_1}\cdots
(\omega^{-1})^{\bar{k}_qs_q}
\varphi_{j_1\cdots j_p\bar{k}_1\cdots  \bar{k}_q}\overline{\psi_{r_1\cdots r_p \bar{s}_1\cdots \bar{s}_q}}.
$$
Then 
the {\em symplectic star operator}
$ *_s:  \wedge ^p A^{p,q} \to  \wedge A^{n-q,n-p}(X)$ is defined by the following representation formula 
\begin{equation}
 i^{q-p} \,\varphi\wedge *_s\overline{\psi}=\omega^{-1}(\varphi, \psi) \frac{\omega^n}{n!}.
\end{equation}
Then $*_s$ is a real operator which can be extended $\mathcal{C}^\infty(X,\C)$-linearly to the space of complex differential forms $A^k(X)$ and $*_s^2=\hbox{\rm id}$. The $sl_2$-triple $\{L, \Lambda, B\}$ acting on the space of $(p,q)$-forms on  $(X,J,\omega)$ is defined by
\begin{equation}\label{eq:sl2}
L:=\omega\wedge, \ \ \Lambda:=*_s L*_s,  \ \ B:=[L, \Lambda].
\end{equation}
We define the {\em symplectic adjoint} $\dbar^\Lambda: A^{k}(X)\to A^{k-1}(X)$ of $\dbar$ as
\begin{equation}\label{eq:sl2-1}
\dbar^\Lambda:=(-1)^{k+1}*_s\dbar *_s.
\end{equation}
Then, as a consequence of \cite[Theorem A]{TW}, we have the following symplectic identity
\begin{equation}\label{eq:sl2-2}
\dbar^\Lambda=[\dbar,\Lambda].
\end{equation}
Setting as usual,
$$
H_{\dbar}^{p,q}(X):=\frac{\ker \dbar\cap A^{p,q}(X)}{{\rm Im} \,\dbar\cap A^{p,q}(X)},
$$
we recall the following two definitions
\begin{definition}[Complex-symplectic cohomologies]\label{de:2.1}
$$
H_{\dbar^\Lambda}^{p,q}(X):=\frac{\ker \dbar^\Lambda \cap A^{p,q}(X)}{{\rm Im} \,\dbar^\Lambda\cap A^{p,q}(X)},  \ \ \ \  
H_{BC}^{p,q}(X):=\frac{\ker \dbar\cap\ker \dbar^\Lambda \cap A^{p,q}(X)}{{\rm Im} \,\dbar\,\dbar^\Lambda\cap A^{p,q}(X)}, 
$$
and 
$$
H_A^{p,q}(X):=\frac{\ker {\dbar\,\dbar^\Lambda}\cap A^{p,q}(X)}{({\rm Im} \,{\dbar^\Lambda}+ {\rm Im} \,{\dbar})\cap A^{p,q}(X)} 
$$
\end{definition}
\begin{definition} (see \cite[Def. 8.3]{TW}) $(X,J,\omega)$ is said to satisfy 
 the $\overline{\partial}\,\overline{\partial}^\Lambda$-{\em Lemma} if
 $$
\ker\overline{\partial}\cap\ker\overline{\partial}^\Lambda\cap(\hbox{\rm Im}\,\overline{\partial}+\hbox{\rm Im}\,\overline{\partial}^\Lambda)=
\hbox{\rm Im}\,\overline{\partial}\overline{\partial}^\Lambda.
$$
\end{definition}
Finally, if $g$ is a Hermitian metric on $(X,J)$, with fundamental form $\omega_g$, then setting, for any given 
$\varphi,\psi\in A^{p,q}(X)$, 
$$
(\varphi,\psi)(x)=\frac{1}{p!q!}(g)^{\bar{r}_1j_1}\cdots 
(g)^{\bar{k}_1s_1}\cdots
(g)^{\bar{k}_qs_q}
\varphi_{j_1\cdots j_p\bar{k}_1\cdots  \bar{k}_q}\overline{\psi_{r_1\cdots r_p \bar{s}_1\cdots \bar{s}_q}},
$$
we denote by $\ll ,\gg$ the $L^2$-Hermitian product on $X$ defined as 
$$
\ll\varphi,\psi\gg=\int_X (\varphi,\psi)(x)\frac{\omega_g^n}{n!}
$$
\section{Complex symplectic cohomologies and Hodge Theory}\label{harmonic}
\subsection{Finiteness theorem}
Let $(X, J)$ be a compact complex manifold with a symplectic $(1,1)$-form $\omega$. For a given $J$-Hermitian metric $g$ on $X$, we will denote by $\omega_g$ the associated fundamental form. We start by giving the following 
\begin{definition} We set
\begin{equation}
\begin{array}{ll}
\Box_{\dbar}&:=\dbar\,\dbar^*+\dbar^*\,\dbar\\[10pt]
\Box_{\dbar^\Lambda}&:=\dbar^\Lambda(\dbar^\Lambda)^*+(\dbar^\Lambda)^*\dbar^\Lambda\\[10pt]
\triangle_{BC}&:=\dbar\,\dbar^\Lambda(\dbar^\Lambda)^*\dbar^*+
(\dbar^\Lambda)^*\dbar^*\dbar\,\dbar^\Lambda+
(\dbar^\Lambda)^*\dbar\,\dbar^*\dbar^\Lambda+
\dbar^*\dbar^\Lambda(\dbar^\Lambda)^*\dbar+
(\dbar^\Lambda)^*\dbar^\Lambda+\dbar^*\dbar
\\[10pt]
\triangle_{A}&:=\dbar\dbar^*+\dbar^\Lambda(\dbar^\Lambda)^*+\dbar^*(\dbar^\Lambda)^*
\dbar^\Lambda \overline{\partial}+ \dbar^\Lambda \dbar^* \overline{\partial} (\dbar^\Lambda)^* +\dbar^\Lambda\overline{\partial} \dbar^*(\dbar^\Lambda)^* +\overline{\partial} (\dbar^\Lambda)^* \dbar^\Lambda \dbar^*
\end{array}
\end{equation}
\end{definition}
We have the following 
\begin{lemma}\label{kernel}
Let $\psi\in A^{p,q}(X)$. Then,
\begin{enumerate}
\item[i)] $$
\psi\in\ker \Box_{\dbar}\iff 
\left\{\begin{array}{lll}
\dbar\psi&=0\\[5pt]
\dbar^*\psi&=0
 \end{array}
\right.
 $$
 \item[ii)] $$
\psi\in\ker \Box_{\dbar^\Lambda}\iff 
\left\{\begin{array}{lll}
\dbar^\Lambda\psi&=0\\[5pt]
(\dbar^\Lambda)^*\psi&=0
 \end{array}
\right.
 $$
 \item[iii)] $$
\psi\in\ker \triangle_{BC}\iff 
\left\{\begin{array}{lll}
\dbar\psi&=0\\[5pt]
\dbar^\Lambda\psi&=0\\[5pt]
(\dbar\,\dbar^\Lambda)^*\psi&=0
 \end{array}
\right.
 $$
 \item[iv)] $$
\psi\in\ker \triangle_{A}\iff 
\left\{\begin{array}{lll}
\dbar\,\dbar^\Lambda\psi&=0\\[5pt]
\dbar^*\psi&=0\\[5pt]
(\dbar^\Lambda)^*\psi&=0
 \end{array}
\right.
 $$
\end{enumerate}
\end{lemma}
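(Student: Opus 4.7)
The plan is to view each of the four operators as a manifestly non-negative self-adjoint operator, namely a sum of ``squares'' of the form $A_i^*A_i$ or $A_iA_i^*$, and then read off the kernel by pairing with the form itself and using integration by parts.

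The first two statements are standard: writing
\[
\langle\!\langle \Box_{\dbar}\psi,\psi\rangle\!\rangle = \|\dbar\psi\|^2+\|\dbar^*\psi\|^2,\qquad \langle\!\langle \Box_{\dbar^\Lambda}\psi,\psi\rangle\!\rangle = \|\dbar^\Lambda\psi\|^2+\|(\dbar^\Lambda)^*\psi\|^2,
\]
the compactness of $X$ (so the integration by parts has no boundary term) and the non-negativity of each summand immediately give (i) and (ii). For (iii), I would first recognize that the six summands in $\triangle_{BC}$ regroup as
\[
\triangle_{BC}=(\dbar\dbar^\Lambda)(\dbar\dbar^\Lambda)^*+(\dbar\dbar^\Lambda)^*(\dbar\dbar^\Lambda)+(\dbar^*\dbar^\Lambda)^*(\dbar^*\dbar^\Lambda)+\bigl((\dbar^\Lambda)^*\dbar\bigr)^*\bigl((\dbar^\Lambda)^*\dbar\bigr)+(\dbar^\Lambda)^*\dbar^\Lambda+\dbar^*\dbar,
\]
using $(\dbar\dbar^\Lambda)^*=(\dbar^\Lambda)^*\dbar^*$. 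Pairing with $\psi$ yields a sum of six squared $L^2$-norms; their joint vanishing is equivalent to $\dbar\psi=0$, $\dbar^\Lambda\psi=0$, and $(\dbar\dbar^\Lambda)^*\psi=0$, because the other four conditions (such as $\dbar\dbar^\Lambda\psi=0$ and $\dbar^*\dbar^\Lambda\psi=0$) follow tautologically from $\dbar^\Lambda\psi=0$. Conversely, under these three conditions each of the six operators composing $\triangle_{BC}$ annihilates $\psi$ directly, so $\triangle_{BC}\psi=0$.

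For (iv), the same strategy applies but I first need to exploit the anticommutation $\dbar\dbar^\Lambda+\dbar^\Lambda\dbar=0$, which follows from $(\dbar+\dbar^\Lambda)^2=0$ together with $\dbar^2=(\dbar^\Lambda)^2=0$ recalled in \eqref{eq:sl2-2} and the surrounding discussion. Then $\triangle_A$ rewrites as
\[
\triangle_A=\dbar\dbar^*+\dbar^\Lambda(\dbar^\Lambda)^*+(\dbar^\Lambda\dbar)^*(\dbar^\Lambda\dbar)+(\dbar^\Lambda\dbar)(\dbar^\Lambda\dbar)^*+\bigl(\dbar(\dbar^\Lambda)^*\bigr)^*\bigl(\dbar(\dbar^\Lambda)^*\bigr)+\bigl(\dbar(\dbar^\Lambda)^*\bigr)\bigl(\dbar(\dbar^\Lambda)^*\bigr)^*.
\]
Taking the inner product with $\psi$ again gives a sum of six squared norms; their vanishing is equivalent to $\dbar^*\psi=0$, $(\dbar^\Lambda)^*\psi=0$, and $\dbar^\Lambda\dbar\psi=0$ (equivalently $\dbar\dbar^\Lambda\psi=0$, by the anticommutation), the three remaining terms being automatic. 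Conversely, these three conditions make each of the six constituent operators of $\triangle_A$ kill $\psi$ individually.

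The only mildly delicate step is the algebraic regrouping of $\triangle_{BC}$ and $\triangle_A$ into the non-negative forms above; once that is done, the argument is a routine integration by parts combined with the anticommutativity of $\dbar$ and $\dbar^\Lambda$. No other input is needed beyond compactness of $X$, which is in force throughout the section.
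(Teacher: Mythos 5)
Your proposal is correct and follows essentially the same route as the paper: both compute $\ll\triangle_\sharp\psi,\psi\gg$ as a sum of six non-negative squared $L^2$-norms and observe that three of the resulting vanishing conditions are the stated ones while the remaining ones are automatic consequences. Your explicit regrouping of the summands into $A^*A$ and $AA^*$ blocks just makes transparent the integration by parts that the paper performs directly.
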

\begin{proof}
i) It is well known from Hodge-Dolbeault theory. \newline
The proof of ii) is similar to the proof of i). \newline 
iii) Let $\psi\in A^{p,q}(X)$. Assume that 
$$
\dbar\psi=0,\qquad
\dbar^\Lambda\psi=0,\qquad
(\dbar\,\dbar^\Lambda)^*\psi=0.
$$
Then, clearly $\triangle_{BC}\psi=0$. 

Conversely, let $\triangle_{BC}\psi=0$. Then, by the definition of $\triangle_{BC}$, we easily get
$$
\begin{array}{lll}
0&=&\ll\triangle_{BC}\psi,\psi\gg\\[10pt]
&=&\vert(\dbar^\Lambda)^*\dbar^*\psi\vert^2+
\vert\dbar\,\dbar^\Lambda\psi\vert^2+
\vert\dbar^*(\dbar^\Lambda)^*\psi\vert^2+
\vert(\dbar^\Lambda)^*\dbar\psi\vert^2+
\vert\dbar^\Lambda\psi\vert^2+
\vert\dbar\psi\vert^2
\end{array}
$$
The last equation implies that 
$$
\dbar\psi=0,\qquad
\dbar^\Lambda\psi=0,\qquad
(\dbar\,\dbar^\Lambda)^*\psi=0.
$$
The proof of iv) is similar.
\end{proof}
\smallskip

The following theorem is essentially known. 
\begin{theorem}\label{th:finite}  Let $(X, J)$ be a compact $n$-dimensional complex manifold endowed with a symplectic $(1,1)$-form $\omega$. If
$\sharp\in 
\{\dbar, \dbar^\Lambda, BC, A\}$,  then the cohomology groups $H^{p,q}_{\sharp}(X)$ are  finite dimensional.
\end{theorem}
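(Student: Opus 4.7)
The plan is to follow the standard Hodge-theoretic scheme of Kodaira--Spencer (for the Dolbeault case) and of Schweitzer (for the Bott--Chern and Aeppli cases), with the ordinary $L^2$-adjoint $\dbar^*$ replaced, where appropriate, by the symplectic adjoint $\dbar^\Lambda$. The whole argument will be reduced to verifying that the four operators in Lemma \ref{kernel} are elliptic, and then applying standard elliptic theory on the compact manifold $X$.

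First I would fix a Hermitian metric $g$ on $X$, with associated $L^2$ pairing $\ll\cdot,\cdot\gg$, and note that each of $\Box_{\dbar}$, $\Box_{\dbar^\Lambda}$, $\triangle_{BC}$, $\triangle_A$ is manifestly formally self-adjoint and non-negative on $A^{p,q}(X)$. By Lemma \ref{kernel} the null space of each operator is precisely the intersection of several $\dbar$- or $\dbar^\Lambda$-type kernels, so once finite-dimensionality of these null spaces is established, finite-dimensionality of the corresponding cohomology groups $H^{p,q}_\sharp(X)$ will follow from a Hodge-type orthogonal decomposition.

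The key step is to check ellipticity. For $\Box_{\dbar}$ this is classical. For $\Box_{\dbar^\Lambda}$ I would use the identity \eqref{eq:sl2-1}, namely $\dbar^\Lambda=\pm\,*_s\dbar *_s$: since $*_s$ is a pointwise $\mathcal{C}^\infty(X,\C)$-linear isomorphism with $*_s^2=\mathrm{id}$, the principal symbol of $\dbar^\Lambda$ is conjugate by $*_s$ to that of $\dbar$, so $\Box_{\dbar^\Lambda}$ is a second-order elliptic operator of the same order as $\Box_{\dbar}$. For the fourth-order operators $\triangle_{BC}$ and $\triangle_A$, I would compute the principal symbol in the spirit of Schweitzer: the leading part of $\triangle_{BC}$ is $\dbar\dbar^\Lambda(\dbar^\Lambda)^*\dbar^*+(\dbar^\Lambda)^*\dbar^*\dbar\dbar^\Lambda$, and a direct algebraic computation on covectors shows that this symbol is non-negative with kernel reduced to zero for every nonzero $\xi\in T^*_xX$; the other summands are lower-order modifications that do not spoil ellipticity. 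The analogous check works for $\triangle_A$.

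Having ellipticity, standard theory on the compact manifold $X$ then yields the orthogonal Hodge-type decompositions
\begin{equation*}
A^{p,q}(X) \;=\; \ker\Box_\sharp \oplus \mathrm{Im}\,\Box_\sharp \qquad (\sharp\in\{\dbar,\dbar^\Lambda\}),
\end{equation*}
\begin{equation*}
A^{p,q}(X) \;=\; \ker\triangle_{BC}\oplus \mathrm{Im}(\dbar\dbar^\Lambda)\oplus\bigl(\mathrm{Im}\,\dbar^*+\mathrm{Im}(\dbar^\Lambda)^*\bigr),
\end{equation*}
\begin{equation*}
A^{p,q}(X) \;=\; \ker\triangle_{A}\oplus \bigl(\mathrm{Im}\,\dbar + \mathrm{Im}\,\dbar^\Lambda\bigr)\oplus \mathrm{Im}(\dbar\dbar^\Lambda)^*,
\end{equation*}
each kernel being finite-dimensional. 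Orthogonality of the three summands in the $BC$ and $A$ decompositions is checked directly using the characterisations in parts iii) and iv) of Lemma \ref{kernel}; for instance, for $\triangle_{BC}$, any $\dbar\dbar^\Lambda\eta$ is orthogonal to $\ker\dbar\cap\ker\dbar^\Lambda$, and also to $\mathrm{Im}\,\dbar^*+\mathrm{Im}(\dbar^\Lambda)^*$ after an integration by parts using $(\dbar\dbar^\Lambda)^*=(\dbar^\Lambda)^*\dbar^*$. Combined with Lemma \ref{kernel}, these decompositions identify $\ker\triangle_\sharp$ with $H^{p,q}_\sharp(X)$, completing the proof.

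The main obstacle in this plan is the symbolic calculation establishing ellipticity of the fourth-order operators $\triangle_{BC}$ and $\triangle_A$; the novelty compared with Schweitzer's original argument is that $\dbar^\Lambda$ replaces $\partial$, which alters the form of the anticommutation relations used to simplify the principal symbol. Once that algebraic check is done, the rest of the argument is a routine adaptation of the classical Hodge-theoretic machinery.
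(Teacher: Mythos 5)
Your overall scheme---reduce everything to ellipticity of the four Laplacians, then apply standard elliptic theory to get Hodge-type decompositions and identify kernels with cohomologies via Lemma \ref{kernel}---is exactly the paper's route, and your treatment of $\Box_{\dbar}$ and $\Box_{\dbar^\Lambda}$ (conjugation by the pointwise isomorphism $*_s$) matches Proposition \ref{th2.5}. The gap is in the ellipticity of $\triangle_{BC}$ and $\triangle_A$, which you flag as the main obstacle but then dispose of with two claims that do not hold as stated. First, the ``leading part'' of $\triangle_{BC}$ is not just $\dbar\dbar^\Lambda(\dbar^\Lambda)^*\dbar^*+(\dbar^\Lambda)^*\dbar^*\dbar\dbar^\Lambda$: the summands $(\dbar^\Lambda)^*\dbar\,\dbar^*\dbar^\Lambda$ and $\dbar^*\dbar^\Lambda(\dbar^\Lambda)^*\dbar$ are also fourth order and contribute to the principal symbol; only $(\dbar^\Lambda)^*\dbar^\Lambda+\dbar^*\dbar$ are subprincipal. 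Second, the symbol of the two terms you keep is \emph{not} injective for $\xi\neq 0$: writing $a=\sigma(\dbar)(\xi)$ and $b=\sigma(\dbar^\Lambda)(\xi)$, its quadratic form is $|b^*a^*u|^2+|abu|^2$, which vanishes identically on $(0,0)$-forms (there $a^*u=0$ and $bu=0$ for degree reasons), so no ``direct algebraic computation on covectors'' can close the argument from those two terms alone.

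The missing ingredient---which is the actual content of the paper's proof---is the relation $[\dbar^*,\dbar^\Lambda]\sim 0$ modulo lower-order operators. The paper obtains it by observing that $\Lambda^*$ is wedging with a $(1,1)$-form, hence of order zero, so $[\dbar^*,\Lambda]\sim 0$, whence $[\dbar^*,\dbar^\Lambda]=[\dbar^*,[\dbar,\Lambda]]\sim[\Box_{\dbar},\Lambda]\sim 0$ because the leading term of $\Box_{\dbar}$ is scalar. With this anticommutation one rearranges all four fourth-order summands to get $\triangle_{BC}\sim\Box_{\dbar^\Lambda}\Box_{\dbar}$, whose principal symbol is a product of two invertible symbols (invertibility of $\sigma(\Box_{\dbar^\Lambda})$ coming from $\Box_{\dbar^\Lambda}=*_s\Box_{\dbar}*_s$); the same device handles $\triangle_A$. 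If you establish this relation and include all four fourth-order terms in the symbol computation, the rest of your plan---formal self-adjointness, the three orthogonal decompositions, and the identification of $\ker\triangle_\sharp$ with $H^{p,q}_\sharp(X)$---is correct and coincides with the paper's argument.
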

We shall give another proof of the above theorem using 
\emph{harmonic representatives}. 
The main idea is to use the following linear algebra lemma:

\begin{lemma}\label{star-symplectic} Let $(X, J)$ be a compact $n$-dimensional complex manifold with a symplectic $(1,1)$-form $\omega$. Fix a Hermitian metric $g$ with fundamental form $\omega_g$ on $X$. Denote by $*_s^g$ the associated symplectic star operators with respect to $\omega_g$. Then
\begin{itemize}
 \item[i)] $*_s^g*_s=*_s*_s^g$\vskip.2truecm
 \item[ii)] $(*_s)^*=*_s$
\end{itemize}
where $ (*_s)^*$ denotes the adjoint of $*_s$.
\end{lemma}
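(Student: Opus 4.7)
Both assertions are pointwise linear-algebra statements --- the operators $*_s$, $*_s^g$ and the pointwise Hermitian product $(\cdot,\cdot)$ are all tensorial --- so it suffices to verify them at each fixed $x\in X$. The natural approach is \emph{simultaneous diagonalization}: because $\omega_g|_x$ is positive-definite Hermitian and $\omega|_x$ is a non-degenerate Hermitian form on $T^{1,0}_xX$, the spectral theorem applied to the $g$-self-adjoint operator associated to $\omega$ furnishes a $g$-unitary basis $\{\xi^1,\dots,\xi^n\}$ of $T_x^{*(1,0)} X$ with
$$\omega_g|_x=i\sum_j\xi^j\wedge\bar\xi^j,\qquad \omega|_x=i\sum_j\lambda_j\,\xi^j\wedge\bar\xi^j,\qquad \lambda_j\in\R\setminus\{0\}.$$
In such a basis every symplectic-star computation can be made explicit.

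Plugging this into the defining identity $i^{q-p}\varphi\wedge *_s\bar\psi=\omega^{-1}(\varphi,\psi)\,\omega^n/n!$ and its $\omega_g$-analogue, a direct expansion on the monomial basis $\{\xi^I\wedge\bar\xi^J\}_{|I|=p,\,|J|=q}$ of $\Lambda^{p,q}T_x^*X$ yields formulas of the shape
$$*_s(\xi^I\wedge\bar\xi^J)=c_{I,J}(\lambda)\,\xi^{J^c}\wedge\bar\xi^{I^c},\qquad *_s^g(\xi^I\wedge\bar\xi^J)=c_{I,J}(\mathbf{1})\,\xi^{J^c}\wedge\bar\xi^{I^c},$$
where $I^c,J^c\subset\{1,\dots,n\}$ denote complements and $c_{I,J}(\lambda)$ is an explicit scalar: a sign $\varepsilon(I,J)$ times a monomial in the $\lambda_k$'s times a power of $i$.

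From this monomial description both parts become algebraic identities on the scalar $c_{I,J}(\lambda)$. For (i), both compositions $*_s\,*_s^g$ and $*_s^g\,*_s$ return $\xi^I\wedge\bar\xi^J$ to itself up to a scalar, and the claim reduces to $c_{I,J}(\lambda)\,c_{J^c,I^c}(\mathbf{1})=c_{I,J}(\mathbf{1})\,c_{J^c,I^c}(\lambda)$. For (ii), the basis $\{\xi^I\wedge\bar\xi^J\}$ is orthonormal for $(\cdot,\cdot)$, the matrix of $*_s$ has its only nonzero entries at the pairings $\xi^I\wedge\bar\xi^J\leftrightarrow\xi^{J^c}\wedge\bar\xi^{I^c}$, and the Hermitian relation $c_{I,J}(\lambda)=\overline{c_{J^c,I^c}(\lambda)}$ --- traceable to the Hermitian symmetry $\omega^{-1}(\varphi,\psi)=\overline{\omega^{-1}(\psi,\varphi)}$ in the defining identity --- makes this matrix Hermitian; integration against $\omega_g^n/n!$ then promotes the pointwise self-adjointness to $(*_s)^*=*_s$.

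The main obstacle is the careful bookkeeping of signs, factorials and powers of $i$ in the explicit formula for $c_{I,J}(\lambda)$, and extracting from it the two combinatorial symmetries cited above. Conceptually the argument is straightforward --- everything reduces to a pointwise matrix computation in a simultaneously-diagonalizing frame --- but the verification is genuinely fiddly; a useful cross-check is that the answers be consistent with the general properties $*_s\bar\alpha=\overline{*_s\alpha}$ (reality of $*_s$) and $*_s^2=\mathrm{id}$.
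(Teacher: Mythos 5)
Your strategy is the same as the paper's: fix $x$, simultaneously diagonalize $\omega$ against $\omega_g$, and reduce both claims to explicit scalar identities on a monomial basis. (The paper gets the product structure you assert for $*_s(\xi^I\wedge\bar\xi^J)$ from the Guillemin splitting lemma, which reduces everything to the one--dimensional case; your $c_{I,J}(\lambda)$ bookkeeping is the hands-on version of the same reduction. For (ii) the paper instead combines (i) with the symmetry $*_su\wedge v=u\wedge *_sv$, which is slicker than your matrix argument but equivalent in content.)

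The genuine gap is that the two scalar identities you defer as ``fiddly bookkeeping'' are not formal consequences of the setup, and the simplest instance already shows they can fail. Take $I=J=\emptyset$: the defining formula gives $*_s1=\omega^n/n!$ and, since $*_s^2=\mathrm{id}$, $*_s(\omega^n/n!)=1$; in your diagonalizing frame $\omega^n/n!=\bigl(\prod_j\lambda_j\bigr)\,\omega_g^n/n!$, so $*_s^g*_s\,1=\prod_j\lambda_j$ while $*_s*_s^g\,1=\prod_j\lambda_j^{-1}$. Hence your commutation identity $c_{I,J}(\lambda)c_{J^c,I^c}(\mathbf 1)=c_{I,J}(\mathbf 1)c_{J^c,I^c}(\lambda)$ fails for $I=J=\emptyset$ unless $\prod_j\lambda_j^2=1$, and the Hermitian symmetry $c_{I,J}(\lambda)=\overline{c_{J^c,I^c}(\lambda)}$ needed for (ii) fails on the same pair (both scalars are real, and they are reciprocal rather than equal). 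So the verification you postpone cannot be completed for an arbitrary Hermitian metric; it does go through under the admissibility condition $\lambda_j^2=1$ of Definition \ref{de:2.5}, which is the setting in which the lemma is actually applied later via the canonical admissible metric. You should either impose that hypothesis explicitly or carry out the scalar computation and observe where it breaks --- note that the paper's own proof reduces to $n=1$ and declares that case ``trivial,'' thereby skipping exactly the verification at issue.
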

In order to prove Lemma \ref{star-symplectic}, we need the following (see e.g., \cite[Lemma 1.6]{Wang17})
\begin{lemma} (Guillemin Lemma) Let $(V,\omega)$ be a symplectic vector space. 
Assume that the  
$$
(V,\omega) = (V_1,\omega^1)\oplus (V_2,\omega^2),
$$
where $(V_i,\omega^i)$, $i=1,2$ are symplectic vector spaces. Then
$$
*_s(u\wedge v)=(-1)^{k_1k_2}*^1_s u\wedge *^2_s v,
$$
for every $u\in\Lambda^{k_1}V_1^*$, $v\in\Lambda^{k_2}V_2^*$.
\end{lemma}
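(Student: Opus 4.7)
The plan is to prove the identity by checking it against the defining bilinear form of the symplectic star, namely $\alpha \wedge *_s \beta = \omega^{-1}(\alpha,\beta)\,\omega^n/n!$ for forms of the same degree, and exploiting the multiplicativity of all three ingredients ($\omega^{-1}$, the Liouville form, and the exterior product) under the direct sum splitting $V = V_1 \oplus V_2$.

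First I would reduce, by bilinearity of both sides of the claimed identity, to the case where the test form is itself decomposable: fix $\phi_1 \in \Lambda^{k_1}V_1^*$ and $\phi_2 \in \Lambda^{k_2}V_2^*$ and compare $\phi_1 \wedge \phi_2 \wedge *_s(u \wedge v)$ with $(-1)^{k_1 k_2}\, \phi_1 \wedge \phi_2 \wedge *_s^1 u \wedge *_s^2 v$. For the right-hand side I would move $*_s^1 u$ (of degree $2n_1-k_1$) past $\phi_2$ (of degree $k_2$), which produces the sign $(-1)^{k_2(2n_1-k_1)}=(-1)^{k_1 k_2}$ and exactly cancels the sign in the statement, reducing matters to showing
\[
\phi_1 \wedge *_s^1 u \wedge \phi_2 \wedge *_s^2 v = (\omega^{-1})(\phi_1 \wedge \phi_2,\, u \wedge v)\, \frac{\omega^n}{n!}.
\]

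Second I would exploit three factorizations coming from the splitting: (a) $\omega = \omega^1 + \omega^2$ with $\omega^1 \wedge \omega^2$ commuting in the naive way, so the binomial expansion combined with $(\omega^i)^{n_i+1}=0$ yields $\omega^n/n! = (\omega^1)^{n_1}/n_1! \wedge (\omega^2)^{n_2}/n_2!$; (b) the inverse bivector splits as $\omega^{-1} = (\omega^1)^{-1} + (\omega^2)^{-1}$, hence on pure wedges of matching bidegree the coefficient pairing factors as $\omega^{-1}(\phi_1 \wedge \phi_2, u \wedge v) = (\omega^1)^{-1}(\phi_1,u)\cdot(\omega^2)^{-1}(\phi_2,v)$, and if the bidegrees do not match both sides vanish; (c) the two pieces $\phi_i \wedge *_s^i (\cdot)$ live in $\Lambda^{2n_i}V_i^*$ so they satisfy the defining relation inside their own factor. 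Applying the defining relation of $*_s^i$ in each factor gives
\[
\phi_1 \wedge *_s^1 u \wedge \phi_2 \wedge *_s^2 v = (\omega^1)^{-1}(\phi_1,u)\,(\omega^2)^{-1}(\phi_2,v)\, \frac{(\omega^1)^{n_1}}{n_1!} \wedge \frac{(\omega^2)^{n_2}}{n_2!},
\]
which by (a) and (b) matches the right-hand side. By the nondegeneracy of the pairing $(\alpha,\beta)\mapsto \alpha\wedge\beta$ on $\Lambda^{\bullet}V^*$, this identifies $*_s(u\wedge v)$ with $(-1)^{k_1k_2}*_s^1 u \wedge *_s^2 v$.

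The main obstacle I anticipate is bookkeeping: one has to verify carefully that the Koszul sign produced by moving $*_s^1 u$ through $\phi_2$ is precisely the sign $(-1)^{k_1 k_2}$ appearing in the statement (using $(-1)^{2n_1 k_2}=1$), and that the formal identity $\omega^n/n!=\sum_{i+j=n}(\omega^1)^i/i! \wedge (\omega^2)^j/j!$ collapses to the single surviving top-degree term because $(\omega^i)^{n_i+1}=0$. Everything else is a direct unraveling of definitions, and the argument is symmetric in the two factors so there is no hidden asymmetry to worry about.
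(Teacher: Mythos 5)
Your proof is correct. Note that the paper itself does not prove this lemma at all: it is quoted as known and attributed to \cite[Lemma 1.6]{Wang17}, so there is no in-paper argument to compare against. Your route --- testing the claimed identity against decomposable forms $\phi_1\wedge\phi_2$ via the defining relation $\alpha\wedge *_s\beta=\omega^{-1}(\alpha,\beta)\,\omega^n/n!$, and then factoring all three ingredients (the Liouville volume via $\omega^n/n!=(\omega^1)^{n_1}/n_1!\wedge(\omega^2)^{n_2}/n_2!$, the pairing via the block-diagonal structure of $\omega^{-1}$, and the wedge) --- is the clean coordinate-free argument, and your sign bookkeeping checks out: $(-1)^{k_2(2n_1-k_1)}=(-1)^{k_1k_2}$ exactly cancels the sign in the statement, and you correctly dispose of the mismatched-bidegree test forms, where both sides vanish. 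The usual alternative in the literature is to pick a symplectic basis adapted to the splitting and verify the identity on basis monomials; your argument avoids that combinatorial computation at the cost of invoking nondegeneracy of the top-degree wedge pairing, which is a fair trade. The only point worth making explicit if you write this up is the factorization $\omega^{-1}(\phi_1\wedge\phi_2,u\wedge v)=(\omega^1)^{-1}(\phi_1,u)\,(\omega^2)^{-1}(\phi_2,v)$: it holds because the Gram matrix defining the extended pairing is block diagonal with the two blocks in the same order on both arguments, so no extra permutation sign appears.
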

\noindent {\em Proof of Lemma \ref{star-symplectic}}\  i) For the first formula, fix $x\in X$; then we can choose local coordinates near $x$ such that
$$
\omega_g(x)=\frac{i}{2}\sum dz^j \wedge d\bar z^j, \ \ 
\omega(x)=\frac{i}{2}\sum  \lambda_j dz^j \wedge d\bar z^j.
$$
Then by the Guillemin Lemma, it is enough to prove the one dimensional case: the proof of this fact is trivial. 

\medskip

ii) The second formula follows from the first and 
$$
*_s u \wedge v=u\wedge *_s v,
$$
where $u, v$ have the same degree.\hfill$\Box$
\smallskip

\begin{remark}
The symplectic star operator $*_s:A^{p,q}(X)\to A^{n-q,n-p}(X)$ induces an isomorphism 
$*_s:H^{p,q}_{\dbar}(X)\to H^{n-q,n-p}_{\dbar^\Lambda}(X)$, by setting, for any given $[u]_{\dbar}\in H^{p,q}_{\dbar}(X)$,
$$
*_s[u]_{\dbar} = [*_s u]_{\dbar^\Lambda}
$$
\end{remark}
 The above lemma implies the  $*_s$ isomorphism $H^{n-q,n-p}_{\dbar^\Lambda}(X)=*_s H^{p,q}_{\dbar}(X)$ is also true for the associated harmonic spaces $\mathcal{H}^{n-q,n-p}_{\dbar^\Lambda}$ and 
 $\mathcal{H}^{p,q}_{\dbar}$. More precisely, we have the following result:

\begin{proposition}\label{th2.5} We have $\Box_{\dbar^\Lambda}=*_s \Box_{\dbar}*_s$, in particular $\ker \Box_{\dbar^\Lambda}=*_s \ker\Box_{\dbar}$. Consequently, 
$$
*_s: \mathcal{H}^{p,q}_{\dbar}\to \mathcal{H}^{n-q,n-p}_{\dbar^\Lambda}
$$
is an isomorphism.
\end{proposition}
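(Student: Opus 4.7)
The plan is to prove the operator identity $\Box_{\dbar^\Lambda}=*_s\Box_{\dbar}*_s$ directly by substituting the defining formula $\dbar^\Lambda=(-1)^{k+1}*_s\dbar*_s$ (on $A^k$) and using the two pieces of Lemma \ref{star-symplectic}, together with $*_s^2=\mathrm{id}$. Once this operator identity is established, the statements on kernels and on harmonic spaces will follow formally.

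First I would compute a workable expression for $(\dbar^\Lambda)^*$. Since $(*_s)^*=*_s$ by Lemma \ref{star-symplectic} (ii) and $\dbar^\Lambda:A^{k+1}\to A^{k}$ is given by $(-1)^{k+2}*_s\dbar *_s$, taking the $L^2$-adjoint yields $(\dbar^\Lambda)^*=(-1)^k *_s\dbar^* *_s$ when acting on $A^k$. This is the main bookkeeping step; the only real issue is the degree-dependent sign, but it is fixed by comparing degrees carefully.

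Next I would plug into the definition of $\Box_{\dbar^\Lambda}$ acting on $u\in A^k$. One checks that the sign contributions from the two factors $\dbar^\Lambda$ and $(\dbar^\Lambda)^*$ in each summand of $\Box_{\dbar^\Lambda}$ always appear as a product of the form $(-1)^{k+1}(-1)^{k-1}$ or $(-1)^{k}(-1)^{k}$, both of which are $+1$. Using $*_s^2=\mathrm{id}$ to cancel the inner pair $*_s*_s$ in each composition, I obtain
\[
\dbar^\Lambda(\dbar^\Lambda)^* u = *_s\dbar\dbar^* *_s u,\qquad (\dbar^\Lambda)^*\dbar^\Lambda u = *_s\dbar^*\dbar *_s u,
\]
which sum to $*_s\Box_{\dbar}*_s u$. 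Hence $\Box_{\dbar^\Lambda}=*_s\Box_{\dbar}*_s$ as operators.

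Finally, since $*_s$ is an invertible involution, $u\in\ker\Box_{\dbar^\Lambda}$ iff $*_s u\in\ker\Box_{\dbar}$, giving $\ker\Box_{\dbar^\Lambda}=*_s\ker\Box_{\dbar}$. Restricting to the bidegree $(p,q)$-piece: $*_s$ sends $A^{p,q}(X)$ into $A^{n-q,n-p}(X)$, so it maps $\mathcal{H}_{\dbar}^{p,q}$ into $\mathcal{H}_{\dbar^\Lambda}^{n-q,n-p}$, and the inverse is given again by $*_s$ on the target (using $*_s^2=\mathrm{id}$), producing the claimed isomorphism. The only nontrivial step is the sign/degree bookkeeping in the computation of $(\dbar^\Lambda)^*$ and of the composition, which I expect to be the main (mild) obstacle.
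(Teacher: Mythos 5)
Your proposal is correct and follows essentially the same route as the paper: compute $(\dbar^\Lambda)^*=\pm\,*_s\dbar^* *_s$ from Lemma \ref{star-symplectic}(ii), observe that the degree-dependent signs cancel in each composition, and use $*_s^2=\mathrm{id}$ to conclude $\Box_{\dbar^\Lambda}=*_s\Box_{\dbar}*_s$. The paper merely states the adjoint formula and the conclusion more tersely, while you spell out the sign bookkeeping and the bidegree shift $(p,q)\mapsto(n-q,n-p)$; no substantive difference.
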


\begin{proof} By ii) of the above lemma, $\dbar^\Lambda =(-1)^{k+1}*_s\dbar*_s$ satisfies
$$
(\dbar^\Lambda)^*=(-1)^{k+1}(*_s)^{*}\dbar^*(*_s)^*=(-1)^{k+1}*_s\dbar^**_s,
$$
which gives $\Box_{\dbar^\Lambda}=*_s \Box_{\dbar}*_s$.
\end{proof}
As a consequence, we can state and prove the following Hodge decomposition, which implies Theorem \ref{th:finite}
\begin{theorem}
Let $(X, J)$ be a compact $n$-dimensional complex manifold with a symplectic $(1,1)$-form $\omega$. Denote by $g$ a Hermitian metric on $X$. Then, 
\begin{enumerate}
 \item[I)] $\Box_{\dbar}$, $\Box_{\dbar^\Lambda}$, $\triangle_{BC}$, $\triangle_{A}$ are elliptic self-adjoint differential operators and, consequently, their kernels are finite dimensional complex vector spaces.
 \item[II)] Denoting by $\mathcal{H}_{\dbar}^{p,q}$, $\mathcal{H}_{\dbar^\Lambda}^{p,q}$, $\mathcal{H}_{BC}^{p,q}$ and $\mathcal{H}_{A}^{p,q}$, respectively $\ker\Box_{\dbar}\big\vert_{A^{p,q}}$, \newline 
 $\ker\Box_{\dbar^\Lambda}\big\vert_{A^{p,q}}$, $\ker \triangle_{BC}\big\vert_{A^{p,q}}$, $\ker \triangle_{A}\big\vert_{A^{p,q}}$, then the following decompositions hold:
 \begin{eqnarray}
 A^{p,q}(X)&=&\mathcal{H}_{\dbar}^{p,q}\stackrel{\perp}{\oplus}\dbar A^{p,q-1}(X)
\stackrel{\perp}{\oplus}\dbar^* A^{p,q+1}(X)\\
  A^{p,q}(X)&=&\mathcal{H}_{\dbar^\Lambda}^{p,q}\stackrel{\perp}{\oplus}\dbar^\Lambda A^{p+1,q}(X)
\stackrel{\perp}{\oplus}(\dbar^\Lambda)^* A^{p-1,q}(X)\\
A^{p,q}(X)&=&\mathcal{H}_{BC}^{p,q}\stackrel{\perp}{\oplus}\dbar\dbar^\Lambda A^{p+1,q-1}(X)\stackrel{\perp}{\oplus}
\Big(\dbar^*A^{p,q+1}(X)+(\dbar^\Lambda)^* A^{p-1,q}(X)\Big)\\
A^{p,q}(X)&=&\mathcal{H}_{A}^{p,q}\stackrel{\perp}{\oplus}\Big(\dbar A^{p,q-1}(X)+\dbar^\Lambda A^{p+1,q}(X)\Big)\stackrel{\perp}{\oplus}
(\dbar\,\dbar^\Lambda)^*A^{p-1,q+1}(X),
\end{eqnarray}
where $\perp$ is taken with respect to the $L^2$-Hermitian product.
\item[III)] Given any pair $(p,q)$, we have the following isomorphisms
$$
H^{p,q}_{\dbar}(X)\simeq \mathcal{H}_{\dbar}^{p,q},\quad
H^{p,q}_{\dbar^\Lambda}(X)\simeq \mathcal{H}_{\dbar^\Lambda}^{p,q},\quad
H^{p,q}_{BC}(X)\simeq \mathcal{H}_{BC}^{p,q},\quad
H^{p,q}_{A}(X)\simeq \mathcal{H}_{A}^{p,q}
$$
\end{enumerate}
We will refer to II) as the Hodge decomposition.
\end{theorem}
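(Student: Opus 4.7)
The plan is to derive all three parts from standard Hodge-theoretic machinery applied to each operator $P\in\{\Box_{\dbar},\Box_{\dbar^\Lambda},\triangle_{BC},\triangle_A\}$. The required inputs are self-adjointness, ellipticity, and the kernel characterization of Lemma~\ref{kernel}. Once each $P$ is recognized as an elliptic self-adjoint operator on the compact manifold $X$, classical elliptic theory delivers the finite-dimensional kernel of (I) and the basic orthogonal splitting $A^{p,q}(X)=\ker P\stackrel{\perp}{\oplus}\mathrm{Im}\,P$; this is then refined into the three-term decomposition of (II) using the factorized form of $P$, and fed into (III) via Lemma~\ref{kernel}.

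Self-adjointness is immediate from the definitions: every summand of $\Box_{\dbar}$, $\Box_{\dbar^\Lambda}$, $\triangle_{BC}$ and $\triangle_A$ either has the form $TT^*$ or $T^*T$, or appears paired together with its formal adjoint. For ellipticity, $\Box_{\dbar}$ is classical, and $\Box_{\dbar^\Lambda}$ inherits ellipticity from Proposition~\ref{th2.5} because conjugation by the pointwise bundle isomorphism $*_s$ preserves the property. The substantive point is ellipticity of the fourth-order operators $\triangle_{BC}$ and $\triangle_A$, which is to be established by a principal symbol computation in the spirit of Schweitzer's treatment of the classical Bott--Chern Laplacian. Writing $a:=\sigma(\dbar)(\xi)$ and $b:=\sigma(\dbar^\Lambda)(\xi)$ for a nonzero covector $\xi$, the key algebraic relations $a^2=b^2=0$ and $ab+ba=0$ are read off from $\dbar^2=(\dbar^\Lambda)^2=(\dbar+\dbar^\Lambda)^2=0$. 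Combining these with invertibility of $a^*a+aa^*$ at $\xi\neq 0$ (classical ellipticity of $\Box_{\dbar}$), and the corresponding invertibility for $b$, drives the symbol analysis. The argument for $\triangle_A$ is formally dual. This algebraic step is the main technical obstacle, and it is where the specific anticommutation $\{\dbar,\dbar^\Lambda\}=0$ plays a decisive role.

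For the refinement in (II), the basic splitting $A^{p,q}=\ker P\stackrel{\perp}{\oplus}\mathrm{Im}\,P$ is sharpened by decomposing $\mathrm{Im}\,P$ along the factorization of $P$. For the BC case one shows $\mathrm{Im}\,\triangle_{BC}\subseteq\dbar\dbar^\Lambda A^{p+1,q-1}+\dbar^*A^{p,q+1}+(\dbar^\Lambda)^*A^{p-1,q}$, and checks that $\dbar\dbar^\Lambda A^{p+1,q-1}$ is $L^2$-orthogonal to the latter sum by integration by parts together with $\dbar^2=(\dbar^\Lambda)^2=0$ and $\dbar\dbar^\Lambda+\dbar^\Lambda\dbar=0$; the required orthogonality between $\mathcal{H}_{BC}^{p,q}$ and both remaining pieces uses Lemma~\ref{kernel}. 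The other three decompositions follow by the same template, with the Aeppli case dual to the BC case. Finally, part (III) is proved by sending a cohomology class to the harmonic part of any representative: Lemma~\ref{kernel} identifies harmonic forms with the correct closure/co-closure conditions, and the orthogonality in (II) simultaneously yields that the harmonic projection is independent of the chosen representative (well-definedness) and that it fails to vanish on any nontrivial class (bijectivity).
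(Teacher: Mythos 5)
Your overall architecture (formal self-adjointness from the $TT^*+T^*T$ structure, elliptic theory giving the kernel/image splitting, refinement of $\mathrm{Im}\,\triangle_{BC}$ along the factorization plus integration by parts for orthogonality, and the harmonic-projection map for III) matches the paper's for parts II and III, which the paper itself treats as routine. The genuine gap is exactly at the point you flag as ``the main technical obstacle'': ellipticity of $\triangle_{BC}$ and $\triangle_A$. You propose to deduce invertibility of the fourth-order symbol from $a^2=b^2=0$, $ab+ba=0$, and invertibility of $a^*a+aa^*$ and $b^*b+bb^*$, where $a=\sigma(\dbar)(\xi)$, $b=\sigma(\dbar^\Lambda)(\xi)$. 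These relations alone do not suffice. From $\sigma_4u=0$ one gets $|abu|^2=|a^*bu|^2=|b^*au|^2=|b^*a^*u|^2=0$, hence $bu\in\ker a\cap\ker a^*=\{0\}$ and then $au\in\ker b\cap\ker b^*=\{0\}$, but one is left with $au=bu=0$ together with only $b^*a^*u=0$, which does not force $u=0$ on abstract grounds: taking $b=a$ (which satisfies all your listed hypotheses) gives $\sigma_4=2(a^*a)^2$, whose kernel is $\ker a^*a\neq 0$. So any correct symbol computation must use the specific pointwise form of $\sigma(\dbar^\Lambda)$ (contraction with the $\omega$-dual $(1,0)$-vector of $\xi^{0,1}$), not just the anticommutation algebra.

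The paper closes this gap by a different mechanism, which is the actual content of its proof of (I): since $\Lambda^*=\sigma\wedge$ for a $(1,1)$-form $\sigma$, the commutator $[\dbar,\Lambda^*]$ is of order zero, whence $[\dbar^*,\Lambda]\sim 0$ and, via Jacobi, $[\dbar^*,\dbar^\Lambda]\sim[\Box_{\dbar},\Lambda]\sim 0$ modulo lower-order operators. This lets one commute the middle factors in each fourth-order term and conclude $\triangle_{BC}\sim\Box_{\dbar^\Lambda}\Box_{\dbar}$, so $\sigma_4(\triangle_{BC})=\sigma(\Box_{\dbar^\Lambda})\sigma(\Box_{\dbar})$ is a product of two invertible symbols and ellipticity is immediate (similarly for $\triangle_A$). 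You should either adopt this commutator argument or carry out the pointwise symbol computation in a frame where $\omega$ is diagonalized against the metric; as written, your proposal asserts the decisive step without a valid set of ingredients to prove it.
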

\begin{proof}
 I) The ellipticity of $\Box_{\dbar}$ is well known. The above Proposition \ref{th2.5} implies that $\Box_{\dbar^\Lambda}$ is elliptic.\newline
Now we compute the principal symbol $\sigma (\triangle_{A})$ of the operator $\triangle_{BC}$.\smallskip

\underline{\em Claim} The principal symbol $\sigma$ of $\triangle_{BC}$ can be written as
$$
\sigma (\triangle_{BC})=\sigma(\Box_{\dbar^\Lambda})\sigma(\Box_{\dbar}),
$$
  
The main idea is to use the local computation in \cite[Proposition 3.3 and Theorem 3.5]{TY}. Since $\Lambda$ is a linear combination of contractions of vectors, we can write
$$
\Lambda^*=\sigma\wedge ,
$$ 
for some degree $(1,1)$-form $\sigma$, which implies that
$$
[\dbar, \Lambda^*]=(\dbar\sigma)\wedge
$$
is an order zero operator. Taking the adjoint, we get
$$
[\dbar^*, \Lambda]\sim 0,
$$
where $\sim$ means differ by a lower order operator. We claim that
\begin{equation}\label{eq:claim}
[\dbar^*, \dbar^\Lambda]\sim  0. 
\end{equation}
In fact,  $[\dbar^*, \Lambda]\sim 0$ gives
$$
[\dbar^*, \dbar^\Lambda]=[\dbar^*, [\dbar, \Lambda]]\sim [\Box_{\dbar}, \Lambda].
$$
Thus our claim follows from
$$
[\Box_{\dbar}, \Lambda]^*=[\sigma\wedge, \Box_{\dbar}] \sim 0,
$$
where $[\sigma\wedge, \Box_{\dbar}] \sim 0$ follows from the fact that the leading term of $\Box_{\dbar}$ is 
$$-\sum g^{\bar k j} \partial^2/\partial z_j \partial\bar z_k
$$
and
$$
[-\sum g^{\bar k j} \partial^2/\partial z_j \partial\bar z_k, \sigma\wedge]\sim 0.
$$
Notice that our claim implies that 
$$
\dbar\,\dbar^\Lambda(\dbar^\Lambda)^*
\dbar^* = -\dbar^\Lambda\dbar(\dbar^\Lambda)^*\dbar^*
\sim \dbar^\Lambda(\dbar^\Lambda)^* \dbar\, \dbar^*.
$$ 
A similar argument gives
$$
(\dbar^\Lambda)^* \dbar^*\, \dbar\,\dbar^\Lambda\sim  (\dbar^\Lambda)^*\dbar^\Lambda \dbar^* \,\overline{\partial}, \ \ 
(\dbar^\Lambda)^*\dbar\,\dbar^*\dbar^\Lambda \sim (\dbar^\Lambda)^*\dbar^\Lambda\dbar^*\,\dbar, \ \ 
\dbar^*\dbar^\Lambda(\dbar^\Lambda)^*\dbar \sim  \dbar^\Lambda(\dbar^\Lambda)^*\dbar\,\dbar^*.
$$
Thus we have
$$
\triangle_{BC} \sim \Box_{\dbar^\Lambda}\Box_{\dbar}.
$$
From the claim, it follows immediately that $\triangle_{BC}$ is elliptic. \newline
A similar argument also shows that $\triangle_{A}$ is elliptic. I) is proved.\medskip

\noindent II) The proof of II) is a direct consequence of the theory of elliptic operators on compact manifolds.
\medskip

\noindent III) The first isomorphism is well known. The second isomorphism follows immediately from Proposition \ref{th2.5}. We show that 
$$
H^{p,q}_{BC}(X)\simeq \mathcal{H}_{BC}^{p,q}.
$$
Let $\psi\in\mathcal{H}_{BC}^{p,q}$. Then the map 
$$
F:\mathcal{H}_{BC}^{p,q}\to H^{p,q}_{BC}(X),\qquad \psi\mapsto [\psi]
$$
is an isomorphism. Indeed, $F$ is $\C$-linear. Furthermore, $F$ is injective; $0=F(\psi)=[\psi]$ if and only if 
$\psi\in\hbox{\rm Im}\,\dbar\,\dbar^\Lambda$. Therefore $\psi\in\hbox{\rm Im}\,\dbar\,\dbar^\Lambda\cap\mathcal{H}_{BC}^{p,q}$ and consequently, by II), it follows that $\psi=0$.
\newline 
The map $F$ is also surjective: let $[\psi]\in H^{p,q}_{BC}(X)$. Then, by Hodge decomposition II)
$$
\psi = (\psi)_{H} + \dbar\,\dbar^\Lambda\eta +\dbar^*\mu +(\dbar^\Lambda)^*\nu.
$$
A direct computation shows that $\dbar^*\mu=0$ and $(\dbar^\Lambda)^*\nu=0$, since $\psi\in\ker\dbar\cap\ker\dbar^\Lambda$ and $(\psi)_{H}\in\mathcal{H}_{BC}^{p,q}$. Therefore, 
$[\psi]=[(\psi)_{H}]$ and $F$ is surjective, that is the map $F$ is an isomorphism. \newline
Similarly, $H^{p,q}_{A}(X)\simeq \mathcal{H}_{A}^{p,q}$. The proof is complete.
\end{proof}
\section{The $\dbar\,\dbar^\Lambda$-Lemma} \label{d-dbar-lemma}
Let $(X, J)$ be a compact complex manifold with a symplectic degree $(1,1)$-form $\omega$. 
As already remarked in Section \ref{intro}, the $\overline{\partial}\,\overline{\partial}^\Lambda$-{\em Lemma} is a generalization of 
the $\overline{\partial}\partial^*$-Lemma on a compact K\"ahler manifold. In fact, as a consequence of Hodge theory and K\"ahler identities, any compact K\"ahler manifold $M$ satisfies 
$$
\ker \dbar\,\cap\ker \partial^*\cap(\hbox{\rm Im}\,\overline{\partial}+\hbox{\rm Im}\,\partial^*)=
\hbox{\rm Im}\,\overline{\partial}\partial^*.
$$
Therefore, since $\partial^*=-i \dbar^\Lambda$, it follows immediately that \emph{every compact K\"ahler manifold satisfies the $\overline{\partial}\,\overline{\partial}^\Lambda$-Lemma}. Let 
$$
H^k_{\dbar}(X):=\bigoplus_{p+q=k}H_{\dbar}^{p,q}(X)
$$
and consider the map 
$L^k:H^{n-k}_{\dbar}(X)\to H^{n+k}_{\dbar}(X)$ induced by the Lefschetz operator $L:A^{p,q}(X)\to A^{p+1,q+1}(X)$ defined as $L\alpha=\omega\wedge\alpha$. Then, $\Big(\bigoplus_{k} H_{\dbar}^k, L,\dbar\Big)$ is a Lefschetz complex. Since $[\dbar,L]=0$, Theorem 3.5 in \cite{TW} implies:

\medskip

\begin{theorem}\label{th4.1} Let $(X,J)$ endowed with a symplectic form of degree $(1,1)$.
Then the following conditions are equivalent:
\begin{enumerate}
 \item[i)] $(X,J,\omega)$ satisfies the $\overline{\partial}\,\overline{\partial}^\Lambda$-Lemma
 \item[ii)] The Lefschetz complex 
 $$\Big(\bigoplus_{k\geq 0} H_{\dbar}^k(X), L,\dbar\Big)$$
 satisfies the Hard Lefschetz Condition.
\end{enumerate}
\end{theorem}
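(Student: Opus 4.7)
The plan is to reduce Theorem \ref{th4.1} to a direct application of the abstract Lefschetz-complex result \cite[Theorem 3.5]{TW}. That theorem establishes, in an abstract algebraic setting, the equivalence between the $d\,d^{\Lambda}$-Lemma on a Lefschetz complex and the Hard Lefschetz Condition on its $d$-cohomology, provided the data satisfies the structural relations $d^{2}=(d^{\Lambda})^{2}=dd^{\Lambda}+d^{\Lambda}d=0$, the symplectic identity $d^{\Lambda}=[d,\Lambda]$, and the commutation $[d,L]=0$. My first task is therefore to verify that the geometric data $(A^{\bullet}(X),L,\dbar,\dbar^{\Lambda})$ fits this abstract framework.

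The verification is almost immediate from the preliminaries. The anticommutation $\dbar\,\dbar^{\Lambda}+\dbar^{\Lambda}\dbar=0$ follows from $\dbar^{2}=(\dbar^{\Lambda})^{2}=0$ together with $(\dbar+\dbar^{\Lambda})^{2}=0$, both recorded in the introduction as consequences of \cite[Theorem A]{TW}. The symplectic identity $\dbar^{\Lambda}=[\dbar,\Lambda]$ is exactly \eqref{eq:sl2-2}. Finally, the commutation $[\dbar,L]=0$ comes from the hypothesis that $\omega$ is a $d$-closed $(1,1)$-form: indeed $\dbar\omega=0$, so for any form $\alpha$ one has $[\dbar,L]\alpha=\dbar(\omega\wedge\alpha)-\omega\wedge\dbar\alpha=(\dbar\omega)\wedge\alpha=0$. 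With these three ingredients in place, \cite[Theorem 3.5]{TW} applies directly to the total complex $\bigl(\bigoplus_{k}A^{k}(X),L,\dbar\bigr)$ equipped with the twisted differential $\dbar^{\Lambda}$, yielding the desired equivalence (i) $\Leftrightarrow$ (ii).

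The real content of the proof lies in the abstract statement being invoked. If one had to reprove the equivalence from scratch here, the substantive direction is (ii) $\Rightarrow$ (i): given a $\dbar$-closed, $\dbar^{\Lambda}$-closed form $\alpha=\dbar\beta+\dbar^{\Lambda}\gamma$, one decomposes $\alpha$, $\beta$, $\gamma$ into primitive components under the $sl_{2}$-action $\{L,\Lambda,B\}$; the HLC-induced Lefschetz decomposition of $H_{\dbar}^{\bullet}(X)$ then allows one to absorb the exact contributions into a single $\dbar\,\dbar^{\Lambda}$-exact form. The converse (i) $\Rightarrow$ (ii) is softer: the $\dbar\,\dbar^{\Lambda}$-Lemma forces injectivity of $L^{k}$ on primitive classes, and combined with the star duality $*_{s}\colon H_{\dbar}^{p,q}(X)\to H_{\dbar^{\Lambda}}^{n-q,n-p}(X)$ (cf.\ Proposition \ref{th2.5}) this delivers the Hard Lefschetz isomorphism on $H_{\dbar}^{\bullet}(X)$. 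The main obstacle is thus conceptual rather than computational: the geometric verifications are routine, and the substance is already packaged into the algebraic machinery of Lefschetz complexes developed in \cite{TW}.
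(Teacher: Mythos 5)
Your proposal is correct and follows essentially the same route as the paper: the paper's proof consists precisely of observing that $[\dbar,L]=0$ makes $\bigl(\bigoplus_k H^k_{\dbar}(X),L,\dbar\bigr)$ a Lefschetz complex and then invoking \cite[Theorem 3.5]{TW}. Your additional verification of the structural relations and the sketch of the underlying Lefschetz-decomposition argument are consistent with, but go slightly beyond, what the paper records.
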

 
\medskip

Further more, in our case, all the above cohomologies are finite dimensional, thus we know that (see Lemma 5.15 in \cite{DGMS}, Lemma 5.41 in \cite{Ma} or  Lemma 2.4 in \cite{AT}) the $\dbar\dbar^\Lambda$-Lemma implies that all the above cohomologies have the same dimension. The converse is also true, a better version is the following fact proved in \cite[Theorem 3.4]{AT} : 
\begin{theorem}\label{d-dbar} Let $(X, J)$ be an $n$-dimensional compact complex manifold with a symplectic degree $(1,1)$-form $\omega$. Then the following inequalities hold
\begin{enumerate}
 \item[I)] $$
\dim H^{p,q}_{BC}(X)+ \dim H^{p,q}_{A}(X) \geq \dim H_{\dbar}^{p,q}(X)+ \dim H_{\dbar^\Lambda}^{p,q}(X).
$$ 
\item[II)] Furthermore, the equality in the above inequalities holds for all $p,q$ if and only if the 
$\dbar\,\dbar^\Lambda$-Lemma holds on $(X, J,\omega)$.
\end{enumerate}
\end{theorem}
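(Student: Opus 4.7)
The plan is to reduce the theorem to an application of the algebraic machinery developed in \cite[Theorem 3.4]{AT} for bigraded double complexes. The three structural facts needed to invoke that result are: $\dbar^2 = 0$, $(\dbar^\Lambda)^2 = 0$, and the anticommutation $\dbar\dbar^\Lambda + \dbar^\Lambda\dbar = 0$. The first two are immediate; the third follows by expanding $(\dbar + \dbar^\Lambda)^2 = 0$, which in turn follows from \eqref{eq:sl2-1} and \eqref{eq:sl2-2}. Thus $(A^{\bullet,\bullet}(X), \dbar, \dbar^\Lambda)$ is a bigraded double complex in the sense of \cite{AT}, with $H_\dbar$, $H_{\dbar^\Lambda}$, $H_{BC}$, $H_A$ as its four associated cohomologies. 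Finite-dimensionality in each bidegree was just established in Theorem \ref{th:finite}, so the dimension count is well-posed.

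For part I), the core of the argument is to construct canonical linear maps
$$H_{BC}^{p,q}(X) \stackrel{\iota}{\longrightarrow} H_{\dbar}^{p,q}(X) \oplus H_{\dbar^\Lambda}^{p,q}(X) \stackrel{\pi}{\longrightarrow} H_A^{p,q}(X)$$
given by $\iota([\alpha]_{BC}) = ([\alpha]_{\dbar}, [\alpha]_{\dbar^\Lambda})$ and $\pi([\alpha]_{\dbar}, [\beta]_{\dbar^\Lambda}) = [\alpha - \beta]_A$. Well-definedness uses the anticommutation identity together with the inclusions $\mathrm{Im}(\dbar\dbar^\Lambda) \subset \mathrm{Im}\,\dbar \cap \mathrm{Im}\,\dbar^\Lambda$ and $\ker\dbar \cup \ker\dbar^\Lambda \subset \ker(\dbar\dbar^\Lambda)$, while a direct calculation gives $\pi\circ\iota = 0$. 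Assembling this complex together with its analogues in neighbouring bidegrees into the six-term exact sequence used in \cite{AT} and running rank-nullity produces the inequality.

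For part II), equality in I) forces the complex above to be exact at the middle, i.e., $\mathrm{Im}\,\iota = \ker\pi$. Unwinding definitions, this exactness is precisely the statement
$$\ker\dbar \cap \ker\dbar^\Lambda \cap (\mathrm{Im}\,\dbar + \mathrm{Im}\,\dbar^\Lambda) = \mathrm{Im}(\dbar\dbar^\Lambda),$$
i.e., the $\dbar\dbar^\Lambda$-Lemma. Conversely, assuming the $\dbar\dbar^\Lambda$-Lemma, the standard three-space argument of Deligne--Griffiths--Morgan--Sullivan (compare \cite[Lemma 5.15]{DGMS}) shows that all four cohomology groups are canonically isomorphic in each bidegree, yielding equality of dimensions. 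The main obstacle is essentially bookkeeping: verifying that the algebraic proof of \cite[Theorem 3.4]{AT}, written for the classical $(\partial, \dbar)$-double complex, transfers verbatim to our setting. This is indeed the case, since the argument there is purely formal and invokes only the three structural facts listed above together with finite-dimensionality.
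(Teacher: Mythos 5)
Your part I) is essentially the paper's: both reduce to the abstract inequality for the double complex $(B^{\bullet,\bullet},\dbar,\dbar^\Lambda)$ with $B^{-p,q}=A^{p,q}(X)$ from \cite{AT}, and the forward direction of II) (the $\dbar\dbar^\Lambda$-Lemma implies equality) via the Deligne--Griffiths--Morgan--Sullivan argument is also fine. The gap is in the converse of II). You claim that equality in I) forces exactness of $H_{BC}^{p,q}\to H_{\dbar}^{p,q}\oplus H_{\dbar^\Lambda}^{p,q}\to H_A^{p,q}$ at the middle, and that this exactness \emph{is} the $\dbar\dbar^\Lambda$-Lemma. Both steps fail, and the second fails for a concrete reason: take the elementary double complex consisting of a single isomorphism $\delta_1:C^{p,q}\to C^{p+1,q}$ between one-dimensional spaces (a length-two zigzag). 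There one checks $\dim H_{BC}^{\bullet,\bullet}+\dim H_A^{\bullet,\bullet}=\dim H_{\delta_1}^{\bullet,\bullet}+\dim H_{\delta_2}^{\bullet,\bullet}$ in \emph{every} bidegree, and the three-term sequence is even exact at the middle, yet the $\delta_1\delta_2$-Lemma fails (the generator of $C^{p+1,q}$ lies in $\ker\delta_1\cap\ker\delta_2\cap\mathrm{Im}\,\delta_1$ but $\mathrm{Im}\,\delta_1\delta_2=0$). So no purely formal argument of the kind you sketch can close the converse: equality with $\dim H_{\delta_1}+\dim H_{\delta_2}$ is strictly weaker than the Lemma in the general double-complex setting.

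What the paper actually uses, and what your proposal is missing, is the comparison with the \emph{total} cohomology $H_D^k(X)$ of $D=\dbar+\dbar^\Lambda$ on $T^k(X)=\bigoplus_{q-p=k}A^{p,q}(X)$: by \cite[Theorem 2]{AT} the Lemma is equivalent to $\sum_{q-p=k}\bigl(\dim H_{BC}^{p,q}+\dim H_A^{p,q}\bigr)=2\dim H_D^k(X)$ for all $k$. To pass from equality in I) to this statement one needs the nontrivial identity
$$
\dim H_D^k(X)=\sum_{q-p=k}\dim H_{\dbar}^{p,q}(X)=\sum_{q-p=k}\dim H_{\dbar^\Lambda}^{p,q}(X),
$$
i.e.\ the degeneration of both Fr\"olicher-type spectral sequences of this double complex. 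This is where the specific geometry enters: the paper proves it by the conjugation trick $e^{-\Lambda}\dbar\bigl(e^{\Lambda}\alpha\bigr)=(\dbar+\dbar^\Lambda)\alpha$, which follows from $\dbar^\Lambda=[\dbar,\Lambda]$ and $[\dbar^\Lambda,\Lambda]=0$ and shows the $D$-complex is isomorphic to the $\dbar$-complex (the second equality comes from $*_s$). You should insert this lemma; with it, equality in I) summed along $q-p=k$ gives exactly the hypothesis of \cite[Theorem 2]{AT} and the converse follows.
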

\noindent{\em Proof.}\  I) Consider the following double complex
$$
(B^{\bullet, \bullet}(X), \dbar,\dbar^\Lambda), \ \ \ B^{-p,q}(X):=A^{p,q}(X).
$$
We know that $\dbar$ (resp. $\dbar^\Lambda$) is of type $(0,1)$ (resp. $(1,0)$). Thus Remark 3.5 in \cite{AT} gives
\begin{equation}\label{eq:5.1}
\dim H^{p,q}_{BC}(X)+ \dim H^{p,q}_{A}(X) \geq \dim H_{\dbar}^{p,q}(X)+ \dim H_{\dbar^\Lambda}^{p,q}(X).
\end{equation}
II) Now it suffices to prove the second part of the Theorem. Put
$$
T^k(X):= \bigoplus_{p+q=k} B^{p,q}(X)= \bigoplus_{q-p=k} A^{p,q}(X), \ \ \ D:=\dbar+\dbar^\Lambda,
$$
then one may define
$$
H_D^k(X):=\frac{\ker D\cap T^k(X)}{{\rm Im}\, D\cap T^k(X)}.
$$
By Theorem 2 in \cite{AT},  the followings are equivalent:
\begin{itemize}
\item[(1)] for every $-n\leq k\leq n$, we have
$$
\sum_{q-p=k} \Big(\dim H^{p,q}_{BC}(X)+ \dim H^{p,q}_{A}(X)\Big) = 2 \dim H_D^k(X);
$$ 
\item[(2)] the $\dbar\,\dbar^\Lambda$-Lemma holds.
\end{itemize}
In order to use the above result, we need the following 
\begin{lemma} We have 
$$
\dim H_D^k(X)= \sum_{q-p=k} \dim H_{\dbar}^{p,q}(X)=\sum_{q-p=k} \dim H_{\dbar^\Lambda}(X).
$$
\end{lemma}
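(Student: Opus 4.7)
The plan is to exhibit an invertible chain map $\Phi := e^{-\Lambda}: (T^\bullet(X), \dbar) \to (T^\bullet(X), D)$, from which the first equality of the lemma will follow immediately. Since $\Lambda : A^{p,q} \to A^{p-1,q-1}$ is nilpotent and preserves $q-p$, the exponential is a finite sum restricting to $\Phi : T^k \to T^k$, and it is invertible with inverse $e^{\Lambda}$. Granting the chain property, $\Phi$ induces
$$H_D^k(X) \cong H_\dbar(T^\bullet)\big|_{T^k} = \bigoplus_{q-p=k} H^{p,q}_\dbar(X),$$
the last equality using that $\dbar$ preserves bidegree inside $T^k$.

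To verify $D\Phi = \Phi \dbar$, the computation rests on the commutation identity $[\Lambda, \dbar^\Lambda] = 0$. Granting it, a short induction gives $\dbar \Lambda^k = \Lambda^k \dbar + k\Lambda^{k-1}\dbar^\Lambda$; summing over $k$ yields $\dbar e^{-\Lambda} = e^{-\Lambda}(\dbar - \dbar^\Lambda)$, while $[\Lambda, \dbar^\Lambda] = 0$ trivially gives $\dbar^\Lambda e^{-\Lambda} = e^{-\Lambda} \dbar^\Lambda$; adding the two produces $D e^{-\Lambda} = e^{-\Lambda} \dbar$. Thus $\Phi$ is a chain isomorphism and descends to the asserted cohomological isomorphism.

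The main obstacle is the identity $[\Lambda, \dbar^\Lambda] = 0$. I would derive it first by observing that $d\omega = 0$ with $\omega$ of type $(1,1)$ forces $[\dbar, L] = 0$, and then applying the graded Jacobi identity to obtain $[L, \dbar^\Lambda] = -\dbar$, followed by a second Jacobi computation
$$[L, [\Lambda, \dbar^\Lambda]] = [B, \dbar^\Lambda] + [\Lambda, -\dbar] = -\dbar^\Lambda + \dbar^\Lambda = 0.$$
To upgrade $[L, [\Lambda, \dbar^\Lambda]] = 0$ to $[\Lambda, \dbar^\Lambda] = 0$, I would invoke the $sl_2$-representation theory of $\{L, \Lambda, B\}$ acting fiberwise on $A^\bullet_x$: since $[\Lambda, \dbar^\Lambda]$ has $\ad_B$-weight $-3$ and lies in $\ker \ad_L$, a weight vector of negative $B$-weight killed by the raising operator cannot be a highest-weight vector in any finite-dimensional $sl_2$-module and must vanish. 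One runs the argument first on the principal symbol (to see that $[\Lambda, \dbar^\Lambda]$ is of order zero) and then on the resulting algebraic operator in $\End(A^\bullet_x)$ at each point.

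The second equality is then free: the $*_s$-isomorphism $H^{p,q}_\dbar(X) \cong H^{n-q, n-p}_{\dbar^\Lambda}(X)$ from Proposition \ref{th2.5} implements the substitution $(p,q) \mapsto (n-q, n-p)$, which preserves $q - p$, so $\sum_{q-p=k} \dim H^{p,q}_\dbar(X) = \sum_{q-p=k} \dim H^{p,q}_{\dbar^\Lambda}(X)$ after reindexing.
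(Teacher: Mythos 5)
Your argument is correct and follows essentially the same route as the paper: the identity $\dbar\Lambda^m=\Lambda^m\dbar+m\Lambda^{m-1}\dbar^\Lambda$ is summed into the conjugation $e^{-\Lambda}\dbar e^{\Lambda}=D$ (you conjugate in the opposite direction, which is equivalent), so the $D$-complex on $T^\bullet(X)$ is isomorphic to the $\dbar$-complex, and the second equality comes from the $*_s$-isomorphism $H^{p,q}_{\dbar}\simeq H^{n-q,n-p}_{\dbar^\Lambda}$, which preserves $q-p$. The only addition is your $sl_2$-theoretic derivation of $[\Lambda,\dbar^\Lambda]=0$, which the paper takes as known (it is \cite[Theorem A]{TW} and is used explicitly in Section \ref{se:kid}); this identity is indeed what makes the induction on $m$ go through, so making it explicit is a reasonable, if redundant, precaution.
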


\begin{proof} The second equality is trivial since $*_s$ gives the following isomorphism:
$$
H_{\dbar}^{p,q} \simeq H_{\dbar}^{n-q,n-p}.
$$
To prove the first equality, we use a similar argument as in the proof of \cite[Theorem 2.3]{C}. Notice that $\dbar^\Lambda=[\dbar, \Lambda] $ gives (by induction on $m$)
$$
\dbar\Lambda^m=\Lambda^m \dbar+m \Lambda^{m-1}\dbar^\Lambda,
$$
which gives
$$
\dbar(e^\Lambda \alpha)=\dbar\left(\sum \frac{\Lambda^k}{k!} \alpha\right)=\sum \frac{\Lambda^k}{k!} \dbar\alpha+ \sum \frac{\Lambda^{k-1}}{(k-1)!} \dbar^\Lambda\alpha=e^\Lambda  (\dbar+\dbar^\Lambda)\alpha,
$$
for every $\alpha\in T^k(X)$. Thus we have
$$
e^{-\Lambda}\dbar(e^\Lambda \alpha)=   (\dbar+\dbar^\Lambda)\alpha=D\alpha,
$$
hence the $D$-complex is equivalent to the $\dbar$-complex on $T^k(X)$ and the lemma follows.
\end{proof}

\begin{proof}[Proof of the second part of Theorem 2] Assume that the $\dbar\,\dbar^\Lambda$-Lemma holds, then we have that
$$
\dim H^{p,q}_{BC}(X)= \dim H^{p,q}_{A}(X) = \dim H_{\dbar}^{p,q}(X)= \dim H_{\dbar^\Lambda}^{p,q}(X),
$$
which gives 
\begin{equation}\label{eq:5.2}
\dim H^{p,q}_{BC}(X)+ \dim H^{p,q}_{A}(X) = \dim H_{\dbar}^{p,q}(X)+ \dim H_{\dbar^\Lambda}^{p,q}(X).
\end{equation}
On the other hand, \eqref{eq:5.2} and the above lemma together imply
$$
\sum_{q-p=k} \Big(\dim H^{p,q}_{BC}(X)+ \dim H^{p,q}_{A}(X)\Big) = 2 \dim H_D^k(X), \ \ \forall \ k,
$$
which is equivalent to that
the $\dbar\,\dbar^\Lambda$-Lemma holds (by \cite[Theorem 2]{AT}).
\end{proof}

\section{K\"ahler identities and admissible metrics}\label{se:kid}

\subsection{K\"ahler identitity of Minkowski type}
In this section we shall prove that if $\omega_g$ further satisfies the assumptions in the following lemma then a K\"ahler identity of Minkowski type holds.

\begin{lemma}\label{le2.6} Let $X$ be an $n$-dimensional complex manifold with Hermitian metric $\omega_g$. Let $\omega$ be a non-degenerate $(1,1)$-form on $X$. Let $\{L,\Lambda, B\}$ be the $sl_2$-triple associated to $\omega$. Let 
$$
\lambda_1 \leq \lambda_2 \leq \cdots \leq \lambda_n,
$$
be the eigenvalues of $\omega$ with respect to $\omega_g$. Assume that
$$
\lambda_j^2=1, \qquad \forall \ 1\leq j\leq n.
$$
Denote by $\Lambda^*$ the adjoint of $\Lambda$ with respect to $\omega_g$. Then
$$
\Lambda^*=L.
$$
\end{lemma}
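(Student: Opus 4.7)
The identity is fiberwise, so I work at a single point $x\in X$. Since the pair $(\omega_g(x),\omega(x))$ of $(1,1)$-forms is simultaneously diagonalizable, choose local coordinates near $x$ in which
\[
\omega_g(x)=\frac{i}{2}\sum_{j=1}^n dz^j\wedge d\bar z^j,\qquad \omega(x)=\frac{i}{2}\sum_{j=1}^n \lambda_j\, dz^j\wedge d\bar z^j,
\]
with $\lambda_j\in\{+1,-1\}$ by the hypothesis $\lambda_j^2=1$ (this is the ``Minkowski'' feature, since some eigenvalues may be negative). Split the cotangent space as an orthogonal sum $V_1\oplus\cdots\oplus V_n$ of complex lines, where the $j$-th line carries the restricted form $\omega^{(j)}=\lambda_j(i/2)\,dz^j\wedge d\bar z^j$. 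Both the exterior algebra and the Hermitian inner product induced by $\omega_g$ factor as tensor products over this splitting.

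The first step is reduction to one complex dimension. By the Guillemin Lemma recalled in the excerpt, the symplectic star operator decomposes across the direct sum up to the signs $(-1)^{k_1k_2}$. Writing $L=\sum_{j}L^{(j)}$ with $L^{(j)}=\omega^{(j)}\wedge$ acting only on $V_j$, I apply the Guillemin formula twice inside the conjugation $*_sL^{(j)}\,*_s$: the two sign factors cancel (because $L^{(j)}$ preserves the total form-degree modulo $2$) and $*_s^{(k)}$ squares to the identity on the factors $k\neq j$. This produces $\Lambda=*_sL\,*_s=\sum_j \Lambda^{(j)}$, where $\Lambda^{(j)}:=*_s^{(j)}L^{(j)}*_s^{(j)}$ acts only on $V_j$. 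Since Hermitian adjoints respect the tensor decomposition, it suffices to prove $(\Lambda^{(j)})^{*}=L^{(j)}$ separately on each $V_j$.

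The second step is the one-dimensional verification. For $\omega=\lambda(i/2)\,dz\wedge d\bar z$ with $\lambda=\pm 1$, the defining formula $i^{q-p}\varphi\wedge *_s\overline{\psi}=\omega^{-1}(\varphi,\psi)\,\omega$ yields by direct calculation
\[
*_s 1=\omega,\qquad *_s dz=-dz,\qquad *_s d\bar z=-d\bar z,\qquad *_s(dz\wedge d\bar z)=-2i\lambda,
\]
where the last identity relies on $\omega^{-1}(dz\wedge d\bar z,dz\wedge d\bar z)=4/\lambda^2=4$, which is precisely where the assumption $\lambda_j^2=1$ enters. Consequently $\Lambda$ annihilates $1,\,dz,\,d\bar z$ and sends $dz\wedge d\bar z\mapsto -2i\lambda$. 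Using the $\omega_g$-norms $|1|^2=1$ and $|dz\wedge d\bar z|^2=4$, taking the Hermitian adjoint gives $\Lambda^{*}(1)=(i\lambda/2)\,dz\wedge d\bar z=\omega$, while $\Lambda^{*}$ vanishes in positive degree. This agrees with $L$ on the full four-dimensional basis, completing the one-dimensional case.

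The main obstacle is the sign bookkeeping in the reduction step: one must verify that the two Guillemin signs $(-1)^{k_1k_2}$ surrounding $L^{(j)}$ in the expression $*_sL^{(j)}\,*_s$ genuinely cancel, so that $\Lambda$ splits as the honest sum $\sum_j\Lambda^{(j)}$ of factor-wise operators, and that both the inner product and the adjoint operation respect the tensor splitting. Once this is in place, the one-dimensional computation is elementary, and the role of the hypothesis $\lambda_j^2=1$---collapsing $4/\lambda_j^2$ to $4$ in the formula for $*_s$ applied to the top form---becomes transparent.
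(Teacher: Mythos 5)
Your proof is correct, and it rests on the same core idea as the paper's: fix a point, simultaneously diagonalize $\omega$ against $\omega_g$, and observe that $\Lambda^*$ has ``eigenvalues'' $1/\lambda_j$ where $L$ has $\lambda_j$, so the two agree exactly when $\lambda_j^2=1$. The difference is in how you compute $\Lambda$. The paper simply quotes the standard contraction formula $\Lambda = i\sum \lambda_j^{-1}(V_j\,\rfloor)(\overline{V_j}\,\rfloor)$ in the diagonalizing frame and takes the adjoint term by term (contraction $\mapsto$ wedge), which makes the conclusion a one-line eigenvalue comparison. You instead go back to the definition $\Lambda=*_sL*_s$, use the Guillemin Lemma to split $*_s$ across the orthogonal sum of complex lines, check that the two Guillemin signs cancel (they do: the degrees entering the two sign factors agree mod $2$ since $L^{(j)}$ shifts degree by $2$), and then compute $*_s$ explicitly on the four basis forms of a one-dimensional factor. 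Your route is longer but more self-contained --- it derives the contraction-type behaviour of $\Lambda$ from the representation formula for $*_s$ given in Section 2 rather than assuming it --- and it makes visible exactly where $\lambda_j^2=1$ enters, namely in $*_s$ of the top form of each line. Both arguments are sound; the paper's is the economical version of the same computation.
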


\begin{proof}  Fix $x\in X$, then we can choose local coordinates near $x$ such that
$$
\omega_g(x)=i\sum dz^j \wedge d\bar z^j,
$$
and
$$
\omega(x)=i\sum  \lambda_j \,dz^j \wedge d\bar z^j.
$$
Let $\{V_1, \cdots, V_n\}$ be the dual frame of $\{dz^1, \cdots, dz^n\}$. Then we have
$$
\Lambda=i \sum \frac{1}{\lambda_j} \,(V_j\,\rfloor)(\overline{V_j}\,\rfloor).
$$
Thus 
$$
\Lambda^*=i\sum \frac{1}{\lambda_j}  dz^j \wedge d\bar z^j.
$$
Now we know that $\Lambda^*=\omega\wedge$ if and only if $\lambda_j^2=1$ for every $j$.
\end{proof}

We will introduce the following definition

\begin{definition}\label{de:2.5} A Hermitian metric $\omega_g$ is said to be admissible with respect to $\omega$ if all eigenvalues of $\omega$ with respect to $\omega_g$ lies in $\{1,-1\}$.
\end{definition}

\begin{theorem}[K\"ahler identity of Minkowski type] Let $(X,\omega, J)$ be a complex manifold with a symplectic $(1,1)$-form $\omega$. With respect to an admissible Hermitian metric $\omega_g$ we have
$$
(\dbar^\Lambda)^*=[L,\dbar^*], \ [(\dbar^\Lambda)^*, L]=0.
$$
We call them K\"ahler identities of Minkowski type.
\end{theorem}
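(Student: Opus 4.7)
The plan is to combine Lemma~\ref{le2.6} --- which, under admissibility, supplies the purely metric equality $\Lambda^{*}=L$ --- with the symplectic identity $[L,\dbar]=0$, which holds because $\omega$ is a $d$-closed $(1,1)$-form and therefore $\dbar\omega=0$. Both asserted K\"ahler-type identities will then follow from purely algebraic manipulations with Hilbert-space adjoints.

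For the first identity $(\dbar^\Lambda)^{*}=[L,\dbar^{*}]$, I would simply dualise the defining relation \eqref{eq:sl2-2}, writing
\[
(\dbar^\Lambda)^{*}=(\dbar\Lambda-\Lambda\dbar)^{*}=\Lambda^{*}\dbar^{*}-\dbar^{*}\Lambda^{*},
\]
and then substituting $\Lambda^{*}=L$ from Lemma~\ref{le2.6} to obtain $(\dbar^\Lambda)^{*}=L\dbar^{*}-\dbar^{*}L=[L,\dbar^{*}]$.

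For the second identity $[(\dbar^\Lambda)^{*},L]=0$, the plan is first to establish the metric-free intertwining $[\Lambda,\dbar^\Lambda]=0$ and then to take adjoints. To prove the intertwining I would use the symplectic-star descriptions $\Lambda=*_{s}L\,*_{s}$ and $\dbar^\Lambda|_{A^{k}}=(-1)^{k+1}*_{s}\dbar\,*_{s}$, together with $*_{s}^{2}=\mathrm{id}$. On $\alpha\in A^{k}(X)$, tracking the fact that $\Lambda$ lowers the form degree by two, so that $\dbar^\Lambda$ acts on $\Lambda\alpha\in A^{k-2}(X)$ with the sign $(-1)^{k-1}=(-1)^{k+1}$, a short bookkeeping computation gives
\[
\Lambda\dbar^\Lambda\alpha=(-1)^{k+1}*_{s}L\dbar\,*_{s}\alpha,\qquad
\dbar^\Lambda\Lambda\alpha=(-1)^{k-1}*_{s}\dbar L\,*_{s}\alpha,
\]
so that $[\Lambda,\dbar^\Lambda]\alpha$ collapses to $(-1)^{k+1}*_{s}[L,\dbar]\,*_{s}\alpha=0$ by $[L,\dbar]=0$. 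Dualising the resulting relation $\Lambda\dbar^\Lambda=\dbar^\Lambda\Lambda$ and invoking $\Lambda^{*}=L$ once more yields $(\dbar^\Lambda)^{*}L=L(\dbar^\Lambda)^{*}$, i.e.\ $[(\dbar^\Lambda)^{*},L]=0$.

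There is no deep obstacle; the only step requiring some care is the sign bookkeeping in $\dbar^\Lambda=(-1)^{k+1}*_{s}\dbar\,*_{s}$ when commuting through $\Lambda$, but since $\Lambda$ shifts the form degree by $-2$ the parity, hence the sign, is preserved automatically. Conceptually, admissibility (eigenvalues $\pm 1$) is exactly the hypothesis that makes the classical adjoint relation $\Lambda^{*}=L$ --- which in the K\"ahler case is simply $\Lambda_{g}^{*}=L_{g}$ --- carry over to the symplectic operators $\Lambda$ and $L$ despite the Minkowski signature of the induced pairing, which is why the resulting identities are labelled ``of Minkowski type''.
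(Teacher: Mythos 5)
Your proposal is correct and follows essentially the same route as the paper, whose entire proof is the one-line remark that the identities follow by taking adjoints of $[\dbar,\Lambda]=\dbar^\Lambda$ and $[\dbar^\Lambda,\Lambda]=0$ together with $\Lambda^*=L$ from Lemma~\ref{le2.6}. The only difference is that you additionally verify $[\dbar^\Lambda,\Lambda]=0$ explicitly via the symplectic star operator and $[L,\dbar]=0$, a fact the paper simply takes as known; your sign bookkeeping there is correct.
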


\begin{proof} Follows from that the adjoint of $[\dbar, \Lambda]=\dbar^\Lambda$, $[\dbar^\Lambda, \Lambda]=0$ and $\Lambda^*=L$.
\end{proof}

\textbf{Remark 1}: In case $\omega$ is positive we know that $\omega_g$ is admissible with respect to $\omega$ if and only if $\omega=\omega_g$, in which case we have
$$
(\dbar^\Lambda)^*=-i\partial,
$$
thus the above theorem reduces to the usual K\"ahler identity.

\medskip

\textbf{Remark 2}: We know that each Bott-Chern type cohomology  $
 H^{p,q}_{BC}$
is isomorphic to 
$$
\mathcal H^{p,q}(BC):=\ker \dbar\cap\ker \dbar^\Lambda \cap \ker (\dbar\dbar^\Lambda)^* \cap A^{p,q}.
$$
Our K\"ahler identity of Minkowski type implies 

\begin{theorem}\label{BC-HLC} Let $(X,\omega, J)$ be a compact complex manifold with a symplectic $(1,1)$-form $\omega$. Let $\oplus \mathcal H^{p,q}(BC)$ be the above harmonic space associated to an arbitrary $\omega$ admissible metric, then $\{\oplus \mathcal H^{p,q}(BC), L:=\omega\wedge \cdot\}$ satisfies the Hard Lefschetz Condition.
\end{theorem}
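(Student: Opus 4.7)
The plan is to show that the natural $\mathfrak{sl}_2$-triple $\{L,\Lambda,B\}$ from \eqref{eq:sl2} restricts to an action on
$$
\mathcal H(BC):=\bigoplus_{p,q}\mathcal H^{p,q}(BC)= \ker\dbar\cap\ker\dbar^\Lambda\cap\ker(\dbar\,\dbar^\Lambda)^*,
$$
turning it into a finite-dimensional $\mathfrak{sl}_2$-module with integer weights, so that the Hard Lefschetz Condition follows from the classical representation theory of $\mathfrak{sl}_2$. Since $B$ acts as $n-k$ on $k$-forms, $\mathcal H^{p,q}(BC)$ is automatically contained in the $B$-weight space of weight $n-(p+q)$; what remains is to verify that both $L$ and $\Lambda$ map $\mathcal H(BC)$ into itself.

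For $L$, note that $[\dbar,L]=0$ (since $\omega$ is a $d$-closed $(1,1)$-form), so $\dbar(L\alpha)=0$. Applying the Jacobi identity to $\dbar^\Lambda=[\dbar,\Lambda]$ together with $[\Lambda,L]=-B$ and the elementary relation $[\dbar,B]=\dbar$ (immediate from $B$ acting by $n-k$ on $k$-forms), one obtains $[\dbar^\Lambda,L]=-\dbar$, whence $\dbar^\Lambda(L\alpha)=L\dbar^\Lambda\alpha-\dbar\alpha=0$ for $\alpha\in\mathcal H(BC)$. Combining the two Minkowski-type K\"ahler identities $(\dbar^\Lambda)^*=[L,\dbar^*]$ and $[(\dbar^\Lambda)^*,L]=0$ with the vanishing $((\dbar^\Lambda)^*)^2=0$ one computes
$$
(\dbar\,\dbar^\Lambda)^*L\alpha=(\dbar^\Lambda)^*\dbar^*L\alpha=(\dbar^\Lambda)^*\bigl(L\dbar^*-(\dbar^\Lambda)^*\bigr)\alpha=L(\dbar^\Lambda)^*\dbar^*\alpha=L(\dbar\,\dbar^\Lambda)^*\alpha=0,
$$
so $L\alpha\in\mathcal H(BC)$.

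For $\Lambda$, the first two conditions are immediate from $[\dbar,\Lambda]=\dbar^\Lambda$ and the identity $[\dbar^\Lambda,\Lambda]=0$ already used in the proof of the Minkowski K\"ahler identity. The third condition is where the admissibility hypothesis plays its essential role: by Lemma \ref{le2.6} admissibility gives $\Lambda^*=L$, hence $L^*=\Lambda$, so taking the adjoints of $[\dbar,L]=0$ and $[\dbar^\Lambda,L]=-\dbar$ produces
$$
[\dbar^*,\Lambda]=0,\qquad [(\dbar^\Lambda)^*,\Lambda]=\dbar^*.
$$
Therefore, for $\alpha\in\mathcal H(BC)$,
$$
(\dbar\,\dbar^\Lambda)^*\Lambda\alpha=(\dbar^\Lambda)^*\Lambda\dbar^*\alpha=\bigl(\Lambda(\dbar^\Lambda)^*+\dbar^*\bigr)\dbar^*\alpha=\Lambda(\dbar\,\dbar^\Lambda)^*\alpha+(\dbar^*)^2\alpha=0.
$$

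Consequently $\mathcal H(BC)$ is a finite-dimensional $\mathfrak{sl}_2$-module graded by integer $B$-weights, and the standard representation theory of $\mathfrak{sl}_2$ gives that $L^k$ restricts to an isomorphism from the weight-$k$ part onto the weight-$(-k)$ part, that is,
$$
L^k:\bigoplus_{p+q=n-k}\mathcal H^{p,q}(BC)\longrightarrow\bigoplus_{p+q=n+k}\mathcal H^{p,q}(BC)
$$
is an isomorphism for every $k\geq 0$, which is precisely the Hard Lefschetz Condition. I expect the main obstacle to be the $\Lambda$-invariance of $\mathcal H(BC)$: without admissibility of the metric the identity $\Lambda^*=L$ breaks down, the commutator $[\dbar^*,\Lambda]$ need not vanish, and $\mathcal H(BC)$ need not be stable under $\Lambda$, so the $\mathfrak{sl}_2$-action cannot be closed up on the harmonic side.
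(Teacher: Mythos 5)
Your proposal is correct, and for the heart of the argument --- showing that $L$ preserves $\mathcal H(BC)=\ker\dbar\cap\ker\dbar^\Lambda\cap\ker(\dbar\,\dbar^\Lambda)^*$ --- it coincides with the paper's proof: the same use of $[\dbar,L]=0$, the Jacobi identity applied to $\dbar^\Lambda=[\dbar,\Lambda]$ together with $[[L,\Lambda],\dbar]=\pm\dbar$ (the sign is a normalization convention for $B$ and is immaterial), and the Minkowski-type identities $(\dbar^\Lambda)^*=[L,\dbar^*]$, $[(\dbar^\Lambda)^*,L]=0$ to kill the third condition. Where you genuinely go beyond the paper is in the final step. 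The paper stops after proving $L$-stability and declares the theorem proved, which strictly speaking only yields that $L^k$ maps $\bigoplus_{p+q=n-k}\mathcal H^{p,q}(BC)$ \emph{into} $\bigoplus_{p+q=n+k}\mathcal H^{p,q}(BC)$; bijectivity requires either invoking the already-established Hard Lefschetz Condition on $H_{BC}(X)$ together with the harmonic isomorphism $\mathcal H^{p,q}_{BC}\simeq H^{p,q}_{BC}(X)$, or closing up the $\mathfrak{sl}_2$-action. You take the second route: by also checking $\Lambda$-stability (using $[\dbar^\Lambda,\Lambda]=0$ and the adjoint identities $[\dbar^*,\Lambda]=0$, $[(\dbar^\Lambda)^*,\Lambda]=\pm\dbar^*$ coming from $L^*=\Lambda$, which is exactly where admissibility enters) you make $\mathcal H(BC)$ a finite-dimensional $\mathfrak{sl}_2$-module and read off the Hard Lefschetz Condition from representation theory. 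This is a self-contained and arguably more complete version of the paper's argument; the paper's version is shorter but leans implicitly on Theorem \ref{th4.1} and the cohomology-level statement.
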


\begin{proof} It is enough to show that for every $u\in  \mathcal H^{p,q}(BC)$, we have $Lu \in \mathcal H^{p,q}(BC)$. Notice that $\dbar u=0$ gives
$$
\dbar(Lu)=\omega \wedge \dbar u=0.
$$
Moreover, since $[\dbar, \Lambda]=\dbar^\Lambda$,  by the Jacobi identity, to show $\dbar^\Lambda(Lu)=0$, it is enough to prove
$$
[[L, \Lambda], \dbar]u=0,
$$
which follows directly from $\dbar u=0$ and
$$
[[L, \Lambda], \dbar]=\dbar.
$$
Now it suffices to show that $\dbar^* (\dbar^\Lambda)^* (Lu)=0$. By the K\"ahler identity of Minkowski type, we have $(\dbar^\Lambda)^*L=L (\dbar^\Lambda)^*$, which gives
$$
\dbar^* (\dbar^\Lambda)^* (Lu)=\dbar^* L (\dbar^\Lambda)^* u=[\dbar^*, L] (\dbar^\Lambda)^* u+ L \dbar^* (\dbar^\Lambda)^* u=[\dbar^*, L] (\dbar^\Lambda)^* u.
$$
Again our K\"ahler identity of Minkowski type gives
$$
[\dbar^*, L] (\dbar^\Lambda)^* u= - (\dbar^\Lambda)^*(\dbar^\Lambda)^*u=0.
$$
Thus the theorem follows.
\end{proof}


\subsection{A canonical way of choosing admissible  metric}

In general, admissible $J$-Hermitian metric is \emph{not unique}. In this section, we shall show that if the holomorphic cotangent bundle of $X$ is smoothly trivial then associated to a global frame, say 
$$
\Psi:=\{\xi^j\},
$$
there is a unique admissible $J$-Hermitian metric. In fact, assume that our symplectic form can be written as
$$
\omega=i \sum \omega_{j\bar k}\,  \xi^j \wedge \overline{\xi^k},
$$
where $\omega_{j\bar k}$ is a \emph{constant} Hermitian matrix with eigenvalues
$$
\lambda_1 \leq \cdots  \leq \lambda_s <0 < \lambda_{s+1}\leq \cdots \leq \lambda_n.
$$
Denote by $V_j$ the associated $\lambda_j$ eigenspace. Put
$$
V(-)=\oplus_{j=1}^s V_j, \ \ \  V(+)=\oplus_{j=s+1}^n V_j,
$$
Then one may define a $\omega$-admissible Hermitian metric $\omega_g$ such that 
$$
\omega_g(u, v)=0, \  \omega_g(u,u)=\omega(u,u), \ \omega_g (v,v)=-\omega(v,v).
$$
for every $u\in V(+)$ and $v\in V(-)$.

\begin{definition}\label{de:can-m} We call $\omega_g$ the canonical $\omega$-admissible metric associated to $\{\xi^j\}$.
\end{definition}

Denote by $A_{\Psi}^{p,q}$ the space of $(p,q)$-forms
$$
u=\sum u_{j_1\cdots j_p \overline{k_1}\cdots \overline{k_q}} \, \xi^{j_1}\wedge\cdots \wedge \xi^{j_p} \wedge \overline{\xi^{k_1}} \wedge\cdots\wedge \overline{\xi^{k_q}},
$$ 
where $u_{j_1\cdots j_p \overline{k_1}\cdots \overline{k_q}}$ are complex constants. Then one may define
$$
H_\sharp^{p,q}(\Psi), \ \ \  \sharp\in \{\dbar, \dbar^\Lambda, BC, A\},
$$
by replacing $A^{p,q}(X)$ with $A^{p,q}_{\Psi}$ in Def. \ref{de:2.1}. We shall introduce the following 

\begin{definition}\label{de:psi-reduced} We say that $H_\sharp^{p,q}(X)$, $\sharp\in \{\dbar, \dbar^\Lambda, BC, A\}$,  is $\Psi$ reduced if  the following isomorphism $H_\sharp^{p,q}(X)\simeq H^{p,q}_{\sharp}(\Psi)$ holds. 
\end{definition}

\section{Proofs of Theorems A and B}\label{se.AB}
In this section we will give the proofs of the first two results. We need some preliminary computations and results.
\subsection{Complex-symplectic cohomology on Kodaira-Thurston surface}\label{se:kt}

In this case, we consider the Kodaira-Thurston manifold $(X, J)$ with symplectic structure (see \cite[Section 5]{RTW}). Let $\R^4$ with coordinates $x^1,\ldots, x^4$ and consider the following product: given any $a=(a^1,\ldots,a^4),b=(b^1\ldots, b^4)\in\R^4$, set
$$
a*b=(a^1+b^1,a^2+b^2, a^3+a^1b^2+b^3,a^4+b^4).
$$
Then $(\R^4,*)$ is a Lie group and $\Gamma=\{(\gamma^1,\ldots,\gamma^4)\in\R^4\,\,\,\vert\,\,\, \gamma_j\in\Z, j=1,\ldots,4\}$ is a 
lattice in $(\R^4,*)$, so that 
$X=\Gamma\backslash\R^4$ is a $4$-dimensional compact manifold. Then, 
$$
e^1=dx^1,\quad e^2=dx^2,\quad e^3=dx^3-x^1dx^2,\quad e^4=dx^4,\quad
$$
are $\Gamma$-invariant $1$-forms on $\R^4$, and, consequently, they give rise to a gobal coframe on $X$. It is 
$$de^3=-e^1\wedge e^2,$$ 
the other differentials vanishing.
Set
$$
Je^1=-e^2,\quad Je^2=e^1,\quad Je^3=-e^4,\quad Je^4=e^3,\quad
$$
and
$$
\omega = e^{13}+e^{24},
$$
where $e^{ij}=e^i\wedge e^j$ and so on. 
Then $J$ is a complex structure on $X$, a global coframe of $(1,0)$-forms is given by
$$
\varphi^1=e^1+ie^2,\qquad \varphi^2=e^3+ie^4
$$
and $\omega$ is a $(1,1)$-symplectic structure on $X$. Explicitly,  
$$
\omega=\frac12(\varphi^1\wedge\overline{\varphi^2}
+\overline{\varphi^1}\wedge\varphi^2),
$$
and the $(1,0)$-coframe $\{\varphi^1, \varphi^2\}$ satisfies
$$
\begin{cases}
d \varphi^1=0, \\
d\varphi^2=-\frac i2 \varphi^1\wedge \overline{\varphi^1}.
\end{cases}
$$
Put
$$
\xi^1:=\varphi^1+i\varphi^2, \ \ \xi^2:=\varphi^1-i\varphi^2,
$$
then we have
$$
\omega=\frac{i}{4}  \left(\xi^1\wedge \overline{\xi^1}-\xi^2\wedge \overline{\xi^2}\right).
$$
Thus the canonical admissible $J$-Hermitian metric associated to $\{\xi^j\}$ is
$$
\omega_g=\frac{i}{4}  \left(\xi^1\wedge \overline{\xi^1}+\xi^2\wedge \overline{\xi^2}\right)= \frac{i}2 (\varphi^1\wedge \overline{\varphi^1}+  \varphi^2\wedge \overline{\varphi^2}).
$$
We will compute the following complex-symplectic harmonic space
$$
\mathcal H^{p,q}(BC):=\ker \dbar\cap\ker \dbar^\Lambda \cap \ker (\dbar\dbar^\Lambda)^* \cap A^{p,q}.
$$
By Theorem \ref{BC-HLC}, it is enough to compute the primitive harmonic space, say $P$, in $\oplus \mathcal H^{p,q}(BC)$. It is clear that
$$
\mathcal H^{p,q}(BC)\cap P =\ker \dbar\cap \ker (\dbar\dbar^\Lambda)^* \cap P.
$$
We know that
$$
\dbar^*=i(-1)^{p+q}*_s^g \partial *_s^g, \ (\dbar^\Lambda)^*=(-i)*_s^g*_s\partial*_s*_s^g,
$$
on $A^{p,q}$. Thus
$$
\ker (\dbar\dbar^\Lambda)^*=\ker (\partial
*_s\partial*_s*_s^g)=\ker(\partial*_s\partial*_s^g).
$$
Now we can use the main result in \cite{RTW} to prove the following theorem:

\begin{theorem}\label{th:KT-BC} All  harmonic forms in $\mathcal H^{p,q}_{BC}$ are $G$-invariant. More precisely, we have
\begin{equation*}
\begin{cases}
\mathcal H^{0,0}_{BC} = {\rm Span}_{\mathbb C}\,\langle 1\rangle, \\

\mathcal H^{1,0}_{BC}= {\rm Span}_{\mathbb C}\,\langle \varphi^1\rangle, \\

\mathcal H^{0,1}_{BC} = {\rm Span}_{\mathbb C}\,\langle \overline{\varphi^1},\ \overline{\varphi^2}\rangle, \\

\mathcal H^{2,0}_{BC} = {\rm Span}_{\mathbb C}\,\langle \varphi^1 \wedge \varphi^2 \rangle,\\

\mathcal H^{1,1}_{BC} = {\rm Span}_{\mathbb C}\,\langle \overline{\varphi^1} \wedge \varphi^2,\ \varphi^1\wedge\overline{\varphi^2}, \varphi^1\wedge\overline{\varphi^1}\rangle, \\

\mathcal H^{0,2}_{BC} = {\rm Span}_{\mathbb C}\,\langle \overline{\varphi^1} \wedge \overline{\varphi^2}\rangle, \\

\mathcal H^{2,1}_{BC} = {\rm Span}_{\mathbb C}\,\langle \varphi^1 \wedge \varphi^2 \wedge \overline{\varphi^1}\rangle,\\

\mathcal H^{1,2}_{BC} = {\rm Span}_{\mathbb C}\,\langle  \varphi^2\wedge \overline{\varphi^1} \wedge \overline{\varphi^2}
,  \varphi^1\wedge \overline{\varphi^1} \wedge \overline{\varphi^2}\rangle,\\

\mathcal H^{2,2}_{BC} = {\rm Span}_{\mathbb C}\,\langle \varphi^1 \wedge\varphi^2\wedge\overline{\varphi^1} \wedge  \overline{\varphi^2} \rangle.
\end{cases}
\end{equation*}
\end{theorem}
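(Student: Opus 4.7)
The plan is to reduce the problem from smooth forms on $X=\Gamma\backslash G$ to the finite-dimensional space $A^{p,q}_\Psi$ of left-invariant (i.e.\ constant-coefficient in the frame $\Psi=\{\varphi^1,\varphi^2\}$) forms, and then to solve the three linear conditions $\dbar\psi=\dbar^\Lambda\psi=(\dbar\dbar^\Lambda)^*\psi=0$ by hand in each bidegree.

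First I would check that $\Psi$ (equivalently $\{\xi^1,\xi^2\}$) is complex nilpotent in the sense of \cite{RTW}. The structure equations give $\overline{\partial}\varphi^1=0$ and $\overline{\partial}\varphi^2=-\frac{i}{2}\varphi^1\wedge\overline{\varphi^1}$, so the filtration $\mathrm{span}(\varphi^1)\subset\mathrm{span}(\varphi^1,\varphi^2)$ of the holomorphic cotangent bundle is preserved by $\overline{\partial}$ in the required sense. Theorem B then applies and guarantees that $H^{p,q}_\sharp(X)$ is $\Psi$-reduced for every $\sharp\in\{\dbar,\dbar^\Lambda,BC,A\}$; in particular $\dim H^{p,q}_{BC}(X)=\dim H^{p,q}_{BC}(\Psi)$, where the right-hand side is the Bott--Chern cohomology of the invariant subcomplex $\bigoplus A^{p,q}_\Psi$.

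Next I would upgrade this cohomological equality to a statement about harmonic representatives. The canonical admissible metric $\omega_g=\frac{i}{2}(\varphi^1\wedge\overline{\varphi^1}+\varphi^2\wedge\overline{\varphi^2})$ is left-invariant, so $*_s$, $*_s^g$, $\dbar^*$, $(\dbar^\Lambda)^*$ and hence $\triangle_{BC}$ all preserve the finite-dimensional space $A^{p,q}_\Psi$. Running the Hodge argument of part III of the decomposition theorem inside $\bigoplus A^{p,q}_\Psi$ -- everything remains orthogonal for the invariant $L^2$ pairing -- produces an isomorphism $A^{p,q}_\Psi\cap\mathcal{H}^{p,q}_{BC}\simeq H^{p,q}_{BC}(\Psi)$. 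Combined with $\mathcal{H}^{p,q}_{BC}\simeq H^{p,q}_{BC}(X)$ and the $\Psi$-reducedness just established, the two finite dimensions agree, so the inclusion $A^{p,q}_\Psi\cap\mathcal{H}^{p,q}_{BC}\subset\mathcal{H}^{p,q}_{BC}$ is an equality and every $\triangle_{BC}$-harmonic form is $G$-invariant.

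It remains to enumerate, bidegree by bidegree, the invariant solutions of the three equations. Theorem \ref{BC-HLC} says $\bigoplus\mathcal{H}^{p,q}_{BC}$ is an $sl_2$-module under $L$, so it suffices to find the primitive part and then apply powers of $L$ to recover the rest. On primitive forms $\dbar^\Lambda=-\Lambda\dbar$ by the Jacobi identity, so $\dbar^\Lambda\psi=0$ is automatic once $\dbar\psi=0$ and only two conditions remain; together with the identity $\ker(\dbar\dbar^\Lambda)^*=\ker(\partial *_s\partial *_s^g)$ recorded in the excerpt, one is left with a small linear system inside each $A^{p,q}_\Psi$, which has dimension at most four. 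Solving these systems and transporting the answers by $L$ yields the stated bases. The main obstacle is the sign-sensitive bookkeeping in this last step: because $\omega=\frac{i}{4}(\xi^1\wedge\overline{\xi^1}-\xi^2\wedge\overline{\xi^2})$ has indefinite signature, $*_s$ and $*_s^g$ differ by a sign that depends on the number of $\xi^2$-type factors, and $(\dbar\dbar^\Lambda)^*$ involves both stars applied twice. A careful table of $*_s$ and $*_s^g$ on the monomial basis of $\bigoplus A^{p,q}_\Psi$ -- combined with the fact that the only nontrivial $\overline{\partial}$ action comes from $\overline{\partial}\varphi^2=-\frac{i}{2}\varphi^1\wedge\overline{\varphi^1}$ -- is where essentially all the calculation lives, and any bookkeeping error there would alter the listed spans in Theorem \ref{th:KT-BC}.
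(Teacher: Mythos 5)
Your route is viable and genuinely different in organization from the paper's. You first make a \emph{global} reduction to the invariant subcomplex: complex nilpotency of $\Psi$ plus Theorem B give $H^{p,q}_{BC}(X)\simeq H^{p,q}_{BC}(\Psi)$, and since $\omega$, the canonical admissible metric, and the structure equations are all invariant, $\triangle_{BC}$ preserves $A^{\bullet,\bullet}_\Psi$; finite-dimensional Hodge theory on that subcomplex plus a dimension count then forces every $\triangle_{BC}$-harmonic form to be $G$-invariant. That is a clean, uniform proof of the first sentence of the theorem, and it is a sound argument. The paper never makes this reduction for $\triangle_{BC}$ itself: it only imports from \cite{RTW} the invariance of the Dolbeault harmonic spaces $\mathcal H^{p,q}_{\dbar}$ and $\mathcal H^{p,q}_{\dbar^\Lambda}$, and then argues bidegree by bidegree — in $(1,0)$ and $(2,0)$ by the inclusion $\mathcal H^{p,q}_{BC}\subset\mathcal H^{p,q}_{\dbar}$, in $(0,1)$ by showing $(\dbar^\Lambda)^*\dbar^*u=0$ forces $\dbar^*u$ to be constant so that $\mathcal H^{0,1}_{BC}=\mathcal H^{0,1}_{\dbar}$, in $(0,2)$ by conjugating $\partial *_s\partial u=0$, and in $(1,1)$ by writing $u=u_0+\dbar v$ with $u_0\in\mathcal H^{1,1}_{\dbar}$ and chasing the primitive decomposition of $v$. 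Both proofs use Theorem \ref{BC-HLC} identically to dispose of the bidegrees above the middle, and your observation that on primitive forms $\dbar$-closedness implies $\dbar^\Lambda$-closedness is exactly the paper's. Your approach buys uniformity and an a priori invariance statement; the paper's buys the explicit spans with almost no $*_s$/$*_s^g$ bookkeeping, because most bidegrees are identified outright with known Dolbeault harmonic spaces.

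The one substantive shortfall is that you stop precisely where the content of the statement lies: the explicit bases. You reduce everything to solving $\dbar\psi=0$ and $\partial *_s\partial *_s^g\psi=0$ on primitive invariant forms, correctly flag that the indefinite signature of $\omega$ makes $*_s$ and $*_s^g$ differ by degree-dependent signs, and then assert that "solving these systems yields the stated bases" without doing so. As written, your argument establishes $G$-invariance and bounds each computation by a four-dimensional linear system, but it does not verify, for instance, that $\varphi^1\wedge\overline{\varphi^1}$ lies in $\mathcal H^{1,1}_{BC}$ while $\varphi^2\wedge\overline{\varphi^2}$ does not — and such membership facts are exactly what the paper later needs (e.g. to show $\mathcal H_{BC}(X)$ is not an algebra in Theorem A(2)). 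To finish, either tabulate $*_s$ and $*_s^g$ on the monomial basis as you propose, or shortcut the work as the paper does by collapsing the Bott--Chern conditions to Dolbeault harmonicity in bidegrees $(1,0)$, $(0,1)$, $(2,0)$, $(0,2)$, leaving only $(1,1)$ to be computed by hand.
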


\begin{proof}  $\mathcal H^{0,0}_{BC} = {\rm Span}_{\mathbb C}\,\langle 1\rangle$ is trivial.

\medskip

\emph{Degree $(1,0)$ case}: Notice that, by bidegree reasons, $\mathcal H^{1,0}_{BC}\subset\mathcal H^{1,0}_{\dbar}={\rm Span}_{\mathbb C}\,\langle \varphi^1\rangle$. By a direct computation, 
$\varphi^1\in\mathcal H^{1,0}_{BC}$, so that $\mathcal H^{1,0}_{BC}={\rm Span}_{\mathbb C}\,\langle \varphi^1\rangle$.

\medskip

\emph{Degree $(0,1)$ case}:  Let $u\in A^{0,1}(X)$. Then, by degree reasons,
$$
u\in\mathcal H^{0,1}_{BC};
u\in\mathcal H^{0,1}_{BC}
$$
if and only if 
$$
\dbar u=0,\qquad (\dbar^\Lambda)^*\dbar^* u=0.
$$
Notice that
$$
(\dbar^\Lambda)^*\dbar^* u=0 \iff 
\del*_s\del *_gu=0 \iff
*_s\del *_gu\,\, \hbox{\rm is a constant} \iff
\dbar^* u\,\, \hbox{\rm is a constant},
$$
which is equivalent to $\dbar\dbar^* u=0$. Thus we have $ \mathcal H^{0,1}_{BC}=\mathcal H^{0,1}_{BC}$.

\medskip

\emph{Degree $(2,0)$ case}:  Follows from 
$\mathcal H^{2,0}_{BC}=\mathcal H^{2,0}_{\dbar}$.

\medskip

\emph{Degree $(1,1)$ case}:  Let 
$u\in \mathcal H^{1,1}_{BC}$. We can write
\begin{equation}\label{11}
u=u_0+\dbar v, \qquad u_0\in \mathcal H^{1,1}(\dbar).
\end{equation}
We have:
$$
\mathcal H^{1,1}_{\dbar}={\rm Span}_{\mathbb C}\,\langle \varphi^1\wedge\overline{\varphi^2},\varphi^2\wedge\overline{\varphi^1}\rangle
$$
Then, it is easy to check that 
$$
\mathcal H^{1,1}_{\dbar} \subset \mathcal H^{1,1}_{BC}, 
$$
\underline{\em Claim}\hskip1.5truecm  $\dbar v\in P\cap \mathcal H^{1,1}_{BC}$.\smallskip\noindent

First of all, $\dbar v\in P$. Indeed, 
$$
\dbar v\in P\iff \Lambda\dbar v=0\iff -[\dbar,\Lambda]v=0\iff \dbar^\Lambda v=0
$$
Furthermore, by \eqref{11}, we get
$$
0=\dbar^\Lambda u=\dbar^\Lambda u_0+\dbar^\Lambda\dbar v=\dbar^\Lambda\dbar v=-\dbar\,\dbar^\Lambda v
$$
that is $\dbar^\Lambda v$ is a constant and, consequently, 
$$
\dbar^\Lambda u\in\hbox{\rm Im}\,\dbar^\Lambda\cap \mathcal{H}^{0,0}_{\dbar^\Lambda},
$$
which implies $\dbar^\Lambda v=0$, i.e., 

$\dbar v\in P\cap \mathcal H^{1,1}(BC)$, i.e. $\dbar^\Lambda\in P$. Moreover, by degree reasons, $\dbar^* v=0$, so that
$$
\dbar v\in\ker\dbar\cap\ker\,\dbar^\Lambda\cap\ker\,(\dbar^\Lambda)^*\dbar^*=\mathcal{H}_{BC}^{1,1}
$$
Now we can write
$$
v=v_0+\dbar^\Lambda f, \qquad v_0\in \mathcal H^{1,0}_{\dbar^\Lambda}.
$$
Since
$$
\mathcal H^{1,0}_{\dbar^\Lambda}=*_s \mathcal H^{2,1}_{\dbar}= {\rm Span}_{\mathbb C}\,\langle \varphi^1, \varphi^2\rangle,
$$
and
$$
\dbar \mathcal H^{1,0}_{\dbar^\Lambda}={\rm Span}_{\mathbb C}\,\langle \varphi^1\wedge\overline{ \varphi^1}\rangle \subset P\cap  \mathcal H^{1,1}_{BC},
$$
we have
$$
\dbar\,\dbar^\Lambda f \in P\cap  \mathcal H^{1,1}_{BC}. 
$$
Thus $\dbar\dbar^\Lambda f =0$ and our formula follows, that is 
$$
\mathcal H^{1,1}_{BC}={\rm Span}_{\mathbb C}\,\langle \varphi^1\wedge\overline{\varphi^1},\varphi^1\wedge\overline{\varphi^2},\varphi^2\wedge\overline{\varphi^1}\rangle
$$
\medskip

\emph{Degree $(0,2)$ case}:  Notice that $u\in
\mathcal H^{0,2}_{BC}$ if and only if
$$
\partial *_s \partial u=0.
$$
Taking the conjugate of the last equation, we obtain
$$
\dbar *_s \dbar\overline{u}=0.
$$
Thus, we have
$$
 *_s \dbar\overline{u}\in \mathcal H^{1,0}_{\dbar}={\rm Span}_{\mathbb C}\,\langle \varphi^1\rangle,
$$
which gives
$$
\dbar\overline{u} \in {\rm Span}_{\mathbb C}\,\langle \varphi^1\wedge\varphi^2 \wedge\overline{\varphi^1} \rangle.
$$
Thus $\dbar\overline{u}=0$, i.e.,
$$
\bar u\in \mathcal H^{2,0}_{\dbar}={\rm Span}_{\mathbb C}\,\langle \varphi^1 \wedge \varphi^2 \rangle.
$$
Therefore, 
$$
\mathcal{H}^{0,2}_{BC}={\rm Span}_{\mathbb C}\,\langle \overline{\varphi^1} \wedge\overline{\varphi^2} \rangle.
$$

\medskip

\emph{The remaining cases follow from the Hard Lefschetz property}.
\end{proof}

\subsection{Complex-symplectic Iwasawa manifold}\label{se:Iwa}

Consider the following three dimensional complex Heisenberg group 
\begin{equation}\label{eq:heisenberg}
\mathbb H(3, \mathbb C):=
\left\lbrace \begin{bmatrix}
1 & z_1 & z_3 \\
0 & 1 & z_2 \\
0 & 0 & 1
\end{bmatrix} 
: z_j\in \mathbb C,  \  j=1,2,3
\right\rbrace
\end{equation}
with the product induced by matrix multiplication. Identify an element in $\mathbb H(3, \mathbb C)$ by a vector, then one may write the product as
$$
(a,b, c) \cdot(z_1, z_2, z_3)=(z_1+a, z_2+c, z_3+az_2+b),
$$
from which we know that
$$
\psi^1:= d\bar z_1, \ \ \psi^2:=dz_2, \ \ \psi^3:=dz_3-z_1dz_2
$$
are left invariant one forms satisfying
\begin{equation}\label{eq:Iwa-eq}
\begin{cases}
d\psi^1=0 \\
d\psi^2=0 \\
d\psi^3=-\overline{\psi^1}\wedge \psi^2.
\end{cases}
\end{equation}
Let $J$ be the almost complex structure on $\mathbb H(3, \mathbb C)$ with global type $(1,0)$ frame $\{\psi^1, \psi^2,\psi^3\}$. Then the above equation implies that $J$ is integrable. Fix a lattice, say
$$
\Gamma:=\{(a,b, c)\in \mathbb H(3,\mathbb C): a, b, c\in \mathbb Z[i]\},
$$
in $\mathbb H(3,\mathbb C)$ and consider the left quotient
$$
X:=\Gamma \backslash \mathbb H(3,\mathbb C).
$$
Since $\{\psi^1, \psi^2,\psi^3\}$ is well defined on $X$, we know that $J$ induces a complex structure (still denoted by $J$) on $X$. Consider 
$$
\omega:=i\, \psi^2\wedge \overline{\psi^2}+ \psi^1\wedge \overline{\psi^3} - \psi^3\wedge \overline{\psi^1},
$$
we know that
$$
\omega^3= 6 i \, \psi^2\wedge \overline{\psi^2} \wedge \psi^1\wedge \overline{\psi^1} \wedge \psi^3\wedge \overline{\psi^3}\neq 0
$$
and
$$
d\omega=0.
$$
Thus $\omega$ is a type $(1,1)$ symplectic form on $X$. The canonical admissible $J$-Hermitian metric is
$$
\omega_g:=i\, \psi^2\wedge \overline{\psi^2}+ i\,\psi^1\wedge \overline{\psi^1} + i
\,\psi^3\wedge \overline{\psi^3}.
$$
Since $\Psi:=\{\psi^1, \psi^2,\psi^3\}$ is complex nilpotent,  theorem $B$ gives
$$
H_{BC}(X)\simeq H_{BC}(\Psi) , \ \  \ \ H_{\dbar}(X)\simeq H_{\dbar}(\Psi).
$$

\begin{theorem}\label{th:iwasawa} The above Iwasawa manifold does not satisfy the $\overline{\partial}\,\overline{\partial}^\Lambda$-Lemma.
\end{theorem}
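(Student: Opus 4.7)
The plan is to use Theorem \ref{th4.1}, the equivalence between the $\overline{\partial}\,\overline{\partial}^\Lambda$-Lemma and the Hard Lefschetz Condition on the Dolbeault cohomology of $X$, so it suffices to exhibit a failure of HLC for $L = \omega \wedge$ on $H_{\dbar}(X)$. The identifications $H^{p,q}_{\dbar}(X) \simeq H^{p,q}_{\dbar}(\Psi)$ recorded just above the theorem (which follow from Theorem B together with the complex nilpotency of $\Psi$) allow the computation to be carried out entirely on $\Psi$-invariant forms.

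From the structure equations \eqref{eq:Iwa-eq} one reads off $\dbar\psi^1 = \dbar\psi^2 = 0$, $\dbar\psi^3 = -\overline{\psi^1}\wedge\psi^2$, and $\dbar\overline{\psi^j} = 0$ for $j = 1,2,3$. Two bidegrees suffice. In bidegree $(0,2)$, the three invariant generators $\overline{\psi^i}\wedge\overline{\psi^j}$ with $1 \leq i < j \leq 3$ are all $\dbar$-closed, and the $\dbar$-image from $A^{0,1}(\Psi)$ is zero, so $\dim H^{0,2}_{\dbar}(\Psi) = 3$. In bidegree $(1,3)$, the three invariant generators $\psi^i \wedge \overline{\psi^1} \wedge \overline{\psi^2} \wedge \overline{\psi^3}$ are also $\dbar$-closed; by Leibniz together with $\dbar\psi^3 = -\overline{\psi^1}\wedge\psi^2$ one computes
$$
\dbar(\psi^3 \wedge \overline{\psi^2} \wedge \overline{\psi^3}) \;=\; \psi^2 \wedge \overline{\psi^1} \wedge \overline{\psi^2} \wedge \overline{\psi^3},
$$
whereas every other generator of $A^{1,2}(\Psi)$ maps to zero under $\dbar$ (either because $\overline{\psi^1}$ then appears twice in the product, or because the $(1,0)$ factor is already $\dbar$-closed). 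Thus $\dim H^{1,3}_{\dbar}(\Psi) = 3 - 1 = 2$, and Theorem B gives $\dim H^{0,2}_{\dbar}(X) = 3 > 2 = \dim H^{1,3}_{\dbar}(X)$.

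Because $L$ has pure bidegree $(1,1)$, the map $L : H^2_{\dbar}(X) \to H^4_{\dbar}(X)$ splits as a direct sum over bigraded pieces; in particular, its restriction $L : H^{0,2}_{\dbar}(X) \to H^{1,3}_{\dbar}(X)$ is a linear map from a $3$-dimensional to a $2$-dimensional vector space, which cannot be injective. Consequently $L : H^2_{\dbar}(X) \to H^4_{\dbar}(X)$ has nontrivial kernel, the Hard Lefschetz Condition on the Lefschetz complex $\bigl(\bigoplus_k H^k_{\dbar}(X), L, \dbar\bigr)$ fails, and Theorem \ref{th4.1} concludes that $X$ does not satisfy the $\overline{\partial}\,\overline{\partial}^\Lambda$-Lemma.

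The main obstacle is essentially bookkeeping: Theorem B takes care of the Nomizu-type reduction, and once that reduction is in place only two bidegrees require actual computation. The only subtlety is tracing the single nonzero image $\dbar(\psi^3 \wedge \overline{\psi^2} \wedge \overline{\psi^3})$ out of $A^{1,2}(\Psi)$; every other component vanishes for trivial reasons. The global argument then rests on the single observation that a dimension drop from $H^{p,q}_{\dbar}$ to $H^{p+1,q+1}_{\dbar}$ is incompatible with HLC.
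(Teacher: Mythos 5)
Your argument is correct, and it reaches the conclusion by the same overarching strategy as the paper (exhibit a failure of the Hard Lefschetz Condition on Dolbeault cohomology and invoke Theorem \ref{th4.1}), but the mechanism you use to detect that failure is different. The paper works at the level $L^2 : H^{0,1}_{\dbar}(X) \to H^{2,3}_{\dbar}(X)$: it checks directly that $\overline{\psi^1}$ is $\Box_{\dbar}$-harmonic with respect to the canonical admissible metric, and then writes $\omega^2 \wedge \overline{\psi^1} = \dbar\,(2i\,\overline{\psi^2}\wedge\psi^1\wedge\overline{\psi^3}\wedge\psi^3)$ explicitly, so a specific nonzero class is shown to lie in the kernel of $L^2$. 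You instead work at the level $L : H^{0,2}_{\dbar}(X)\to H^{1,3}_{\dbar}(X)$ and argue by pure dimension count after the Nomizu-type reduction of Theorem B: since the only nonvanishing invariant $\dbar$-image in bidegree $(1,3)$ is $\dbar(\psi^3\wedge\overline{\psi^2}\wedge\overline{\psi^3})=\psi^2\wedge\overline{\psi^1}\wedge\overline{\psi^2}\wedge\overline{\psi^3}$, you get $3=\dim H^{0,2}_{\dbar}>\dim H^{1,3}_{\dbar}=2$, and a bidegree-$(1,1)$ operator cannot map a $3$-dimensional block injectively into a $2$-dimensional one; your computations of both dimensions check out against the structure equations \eqref{eq:Iwa-eq}. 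Your route buys independence from any metric or harmonic-form considerations (only the vector-space isomorphisms $H^{p,q}_{\dbar}(X)\simeq H^{p,q}_{\dbar}(\Psi)$ are needed, not compatibility of $L$ with them), at the cost of leaning more heavily on Theorem B; the paper's route produces an explicit witness and needs only that $\overline{\psi^1}$ is harmonic, without computing any full cohomology group.
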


\begin{proof}  It suffices to show that $\overline{\psi^1}$ is $\Box_{\dbar}$-harmonic but $
\omega^2\wedge \overline{\psi^1} $ is $\dbar$-exact. To show that $\overline{\psi^1}$ is $\Box_{\dbar}$-harmonic,  it is enough to verify that
$$
\dbar \,\overline{\psi^1} =0, \ \  \dbar*_s^g  \overline{\psi^1}=0.
$$
The first identity follows directly from \eqref{eq:Iwa-eq}. For the second identity, notice that up to a constant $*_s^g  \overline{\psi^1}$ is equal to $\omega_g^2 \wedge \overline{\psi^1}$. Again, by \eqref{eq:Iwa-eq}, we know that
$$
\omega_g^2 \wedge \overline{\psi^1} =-2 \psi^2\wedge \overline{\psi^2}\wedge \psi^3\wedge \overline{\psi^3} \wedge \overline{\psi^1}
$$
is $\dbar$-closed, which implies that
$$
\dbar*_s^g  \overline{\psi^1}=0.
$$
Hence $\overline{\psi^1}$ is $\Box_{\dbar}$-harmonic. Moreover, we have 
$$
\omega^2\wedge \overline{\psi^1}=2i \, \psi^2\wedge \overline{\psi^2}\wedge \psi^1\wedge \overline{\psi^3} \wedge \overline{\psi^1}=\dbar (2i \, \overline{\psi^2}\wedge \psi^1\wedge \overline{\psi^3} \wedge \psi^3),
$$
thus $\omega^2\wedge \overline{\psi^1}$ is $\dbar$-exact, from which we know that the $H_{\dbar}$ does not satisfy the Hard Lefschetz Condition. Thus our theorem follows from Theorem \ref{th4.1}.
\end{proof}

\subsection{Proof of Theorem A}
(1) Follows from Theorem 3.3 in \cite{TW} (see \cite{TY} for the real case).

\medskip

(2) The first part follows from Theorem \ref{BC-HLC}. For the second part, by the previous computations collected in Theorem \ref{th:KT-BC}, we immediately obtain that
$$
(\varphi^1\wedge \varphi^2)\wedge \overline{\varphi^2}
$$
is not $\triangle_{BC}$-harmonic, but both $\varphi^1\wedge \varphi^2$ and $\overline{\varphi^2}$ 
are $\triangle_{BC}$-harmonic. Consequently, $\mathcal{H}_{BC}(X)$ is not an algebra. 

\medskip

(3)  By \cite[Theorem B (4)]{TW}, we know that the Kodaira--Thurston manifold does not satisfy the $\overline{\partial}\,\overline{\partial}^\Lambda$-Lemma. The Iwasawa case follows from Theorem \ref{th:iwasawa}.
\newline
The Proof of Theorem A is complete.\hfill$\Box$
\subsection{Proof of Theorem B}
Now it suffices to prove Theorem B. Assume that
$$
H_{\dbar}(X)\simeq H_{\dbar}(\Psi).
$$ 
Since $w\in A_{\Psi}^{1,1}$, we know that 
$$
*_s (A_\Psi)=A_{\Psi},
$$
which gives
$$
H_{\dbar^\Lambda}(X)\simeq *_s H_{\dbar}(X)\simeq *_s H_{\dbar}(\Psi) \simeq H_{\dbar^\Lambda}(\Psi).
$$
Moreover, there is a natural map  from $A$ to $A_\Psi$ defined by
$$
\mu:u\mapsto\sum \left(\int_X u_{j_1\cdots j_p \overline{k_1}\cdots \overline{k_q}} \, \frac{\omega^n}{\int_X \omega^n}\right) \xi^{j_1}\wedge\cdots \wedge \xi^{j_p} \wedge \overline{\xi^{k_1}} \wedge\cdots\wedge \overline{\xi^{k_q}},
$$
for
$$
u=\sum u_{j_1\cdots j_p \overline{k_1}\cdots \overline{k_q}} \, \xi^{j_1}\wedge\cdots \wedge \xi^{j_p} \wedge \overline{\xi^{k_1}} \wedge\cdots\wedge \overline{\xi^{k_q}} \in A^{p,q}(X).
$$
Denoting by $\iota$ the natural mapping 
$$
\iota: A^{\bullet,\bullet}(\psi)\hookrightarrow A^{\bullet,\bullet}(X),
$$
notice that $\mu$ satisfies
$$
(\mu\circ\iota) (u)=u, \ \ \forall \ u\in A_{\Psi}.
$$
Thus Corollary 1.3 in \cite{AK19} implies that $H_{\sharp}(X) \simeq H_{\sharp}(\Psi)$ also for all $\sharp\in \{BC, A\}$. Moreover, in case $\Psi$ is complex nilpotent, the main theorem in \cite{RTW} implies $H_{\dbar}(X)\simeq H_{\dbar}(\Psi)$. Thus the above argument gives
$H_{\sharp}(X) \simeq H_{\sharp}(\Psi)$ for all $\sharp\in \{\dbar, \dbar^\Lambda, BC, A\}$. The proof is complete.

\section{Deformations of Nakamura manifolds}\label{se:defor}
This section is devoted to the proof of Theorem C. First of all,  
we need to recall some definitions and facts from {\em Dolbeault formality} on complex manifolds. By definition, a complex manifold $(X,J)$ is said to be 
{\em Dolbeault formal} if the bi-differential, bi-graded algebra (shortly {\em bba}) $( A^{\bullet,\bullet}(X),\del,\dbar)$ is equivalent (in the category of bba) to a bba $(B,\del_B,0)$, which means that there exists a family of bba 
$\{(C_l,\del_l,\dbar_l)\}_{l\in \{0,1,\dots,2n+2\}}$ such that $(C_0,\del_0,\dbar_0)=(A^{\bullet,\bullet}(X),\del,\dbar)$, 
$(C_{2n+2},\del_{2n+2},\delbar_{2n+2})=(B,\del_B,0)$ and a family of bba-morphisms
$$ \xymatrix{
   & \left(C_{2j+1},\del_{2j+1},\delbar_{2j+1} \right) \ar[ld]_{f_j} \ar[rd]^{g_j} & \\
  \left(C_{2j},\del_{2j},\delbar_{2j}\right) & & \left(C_{2j+2},\del_{2j+2},\delbar_{2j+2} \right)
} $$
for $l\in\{0,1,\dots,n\}$, such that the morphisms induced in cohomology are bba-isomorphisms. A complex manifold $(X,J)$ is said to be 
{\em geometrically Dolbeault formal} if there is a Hermitian metric $g$ such that the harmonic space of the Dolbeault cohomology is an algebra with respect to the wedge product. In particular, any complex manifold geometrically Dolbeault formal is Dolbeault formal. We now recall shortly the construction of {\em Dolbeault-Massey triple products} on a complex manifold, which provide an obstruction to Dolbeault formality.
Let 
$$
\mathfrak{a}=[\alpha]\in H_{\dbar}^{p,q}(X),\quad \mathfrak{b}=[\beta]\in H_{\dbar}^{r,s}(X),\quad 
\mathfrak{c}=[\gamma]\in H_{\dbar}^{u,v}(X)
$$
such that
$$
\mathfrak{a}\cdot\mathfrak{b}=0\in H_{\dbar}^{p+r,q+s}(X),\quad \mathfrak{b}\cdot\mathfrak{c}=0\in H_{\dbar}^{r+u,s+v}(X).
$$
Then there exist $f\in \Lambda^{p+r,q+s-1}X$ and $g\in \Lambda^{r+u,s+v-1}X$ satisfying
$$
\alpha\wedge\beta=\delbar f,\quad \beta\wedge\gamma=\delbar g.
$$
The {\em Dolbeault-Massey triple product} of the cohomology classes $\mathfrak{a},\,\mathfrak{b},\,\mathfrak{c}$ is defined as 
$$
\begin{array}{ccl}
\left\langle\mathfrak{a},\mathfrak{b},\mathfrak{c}\right\rangle & := & [f\wedge\gamma +(-1)^{p+q+1}
\alpha\wedge g] \\[8pt]
 & \in & \displaystyle \frac{H_{\delbar}^{p+r+u,q+s+v-1}(X)}{H_{\delbar}^{p+r,q+s-1}(X)\cdot H_{\delbar}^{u,v}(X)+
 H_{\delbar}^{p,q}(X)\cdot 
 H_{\delbar}^{r+u,s+v-1}(X)}.\\[8pt]
\end{array}
$$
Finally, if $(X,J)$ is Dolbeault formal, in particular geometrically formal, then all the Dolbeault-Massey triple products vanish.

\subsection{Complex and symplectic structures on Nakamura manifolds}
We start by recalling the construction and the cohomology properties of the holomorphically parallelizable Nakamura manifold (see \cite[p.90]{N}).
On $\C^3$ with coordinates $(z_1,z_2,z_3)$ 
consider the following product $*$
$$
(w_1,w_2,w_3)*(z_1,z_2,z_3)=(w_1+z_1,e^{w_1}z_2+w_2,e^{-w_1}z_3+w_3).
$$
Then $G=(\C^3,*)$ is a solvable Lie group, which is the semidirect product $\C\ltimes\C^2$, admitting a uniform 
discrete subgroup $\Gamma=\Gamma'\ltimes\Gamma''$, where $\Gamma'\subset\C$ is given by $\Gamma'=\lambda\Z\oplus i2\pi\Z$ and  
$\Gamma''$ is a lattice in $\C^2$; thus $N:=\Gamma\backslash \C^3$ is a compact complex $3$-dimensional 
manifold, endowed with the complex structure $J_N$ induced by the standard complex structure on $\mathbb C^3$. It turns out that $h^{0,1}(N)=3$. It is immediate to check that 
$$
\varphi^1=dz_1,\qquad \varphi^2=e^{-z_1}dz_2,\qquad \varphi^3=e^{z_1}dz_3
$$
are $G$-invariant holomorphic $1$-forms on $\C^3$, so that they induce holomorphic $1$-forms on $N$, namely 
$\{\varphi^1,\varphi^2,\varphi^3\}$ is a global holomorphic co-frame on $N$ and the complex manifold $N$ is holomorphically parallelizable. 
We have
$$
d\varphi^1=0,\qquad d\varphi^2=-\varphi^1\wedge\varphi^2,\qquad d\varphi^3=\varphi^1\wedge\varphi^3. 
$$
By the construction of $N$, it follows that $e^{\frac{z_1-\overline{z}_1}{2}}$ is a well-defined complex-valued smooth function on $N$. 
Let 
$$
\omega_N=\frac{i}{2}\varphi^1\wedge\overline{\varphi^1}+\frac12 e^{-z_1+\overline{z}_1}\,\overline{\varphi^2}\wedge\varphi^3
+\frac12 e^{z_1-\overline{z}_1}\,\varphi^2\wedge\overline{\varphi^3}.
$$
Then  $$
\overline{\omega_N}=\omega_N,\qquad \omega_N^3=-\frac{3}{4} (idz_1\wedge d\bar z_1) \wedge (idz_2\wedge d\bar z_3)\wedge(idz_3\wedge d\bar z_3)<0,
$$ 
and explicitly,
$$
\omega_N=\frac{i}{2} dz_1\wedge d\overline{z}_1 + \frac12 d\overline{z}_2\wedge dz_3+
\frac12 d\overline{z}_3\wedge dz_2\,,
$$
so that $d\omega_N=0$ and the complex structure $J_N$ on $N$ is $\omega$-symmetric. Then, see \cite[Sec. 8.4]{TW}, $(N,J_N,\omega_N)$ satisifies 
the $\overline{\partial}\,\overline{\partial}^\Lambda$-Lemma. By \cite{K}, the Dolbeault cohomology $N$ can be computed by taking the finite 
dimensional subcomplex $(C_\Gamma,\delbar)\hookrightarrow (A^{\bullet,\bullet}(N),\delbar)$ given by
$$
C_\Gamma =\Lambda^{\bullet,\bullet}\left(\Span_\C\left\langle dz_1,e^{-z_1}dz_2,e^{z_1}dz_3 \right\rangle
\oplus \Span_\C\left\langle d\overline{z}_1, e^{-z_1}d\overline{z}_2,e^{z_1}d\overline{z}_3\right\rangle\right).
$$

Let $g$ be the Hermitian metric on $N$ defined by
$$
g=\sum_{j=1}^3\varphi^j\otimes\overline{\varphi^j}
$$
and denote by $\Box^g_{\delbar}$  the Dolbeault Laplacian associated to $g$.
Then, it turns out that 
$$
H^{\bullet,\bullet}_{\delbar}(N)\simeq \ker \Box^g_{\delbar}=C_\Gamma,
$$
and that $N$ is geometrically Dolbeault formal (i.e. the harmonic space of the Dolbeault cohomology is an algebra with respect to the wedge product).
Summing up, $(N,J_N,\omega_N)$ is a compact $3$-dimensional geometrically Dolbeault formal complex manifold satisfying the 
$\overline{\partial}\,\overline{\partial}^\Lambda$-Lemma. 
\subsection{Complex deformations of Nakamura manifolds which do not satisfy the $\dbar\dbar^\Lambda$-Lemma}
We will construct a $1$-parameter complex deformation $N_t=(N,J_t)$ of $N=(N,J_N)$, 
admitting a $J_t$-symmetric symplectic structure $\omega_t$, such that 
$N_t$ is not Dolbeault formal (see Lemma \ref{massey} and \cite{TT} for the Definition) and $(N,J_t,\omega_t)$ does not satisfy the $\overline{\partial_t}\,\overline{\partial_t}^{\Lambda}$-Lemma, for $t\neq 0$. \newline
Let $\{\zeta_1,\zeta_2,\zeta_3\}$ be the holomorphic global frame on $N$, dual to $\{\varphi^1,\varphi^2,\varphi^3\}$. Then
$$
\zeta_1=\frac{\partial}{\partial z_1},\qquad \zeta_2=e^{z_1}\frac{\partial}{\partial z_2},\qquad \zeta_3=e^{-z_1}\frac{\partial}{\partial z_3}.
$$
\begin{lemma}\label{deformed-complex-structure}
Let $\varphi_t=te^{\overline{z}_1-z_1}\overline{\varphi^2}\otimes\zeta_3 \in A^{0,1}(N,T^{1,0}N)$, $t\in \C, \vert t\vert<\varepsilon$. Then
$$\delbar\varphi_t +\frac12 [[\varphi_t,\varphi_t]]=0.$$
\end{lemma}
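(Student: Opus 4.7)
The plan is to verify the Maurer--Cartan equation by first reducing $\varphi_t$ to an extremely simple form in the standard coordinates. Using the explicit expressions $\overline{\varphi^2}=e^{-\overline{z}_1}d\overline{z}_2$ and $\zeta_3=e^{-z_1}\partial/\partial z_3$, the three exponential factors collapse:
$$\varphi_t \;=\; t\,e^{\overline{z}_1-z_1}\cdot e^{-\overline{z}_1}\,d\overline{z}_2 \otimes e^{-z_1}\frac{\partial}{\partial z_3} \;=\; t\,e^{-2z_1}\,d\overline{z}_2\otimes \frac{\partial}{\partial z_3}.$$
The decisive observation is that after this rewriting the coefficient $t\,e^{-2z_1}$ is \emph{holomorphic}, the form part is a constant multiple of $d\overline{z}_2$, and the vector part is the translation-invariant holomorphic vector field $\partial/\partial z_3$. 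This representation descends to $N$ because both factors transform consistently under $\Gamma$, which is exactly the point of introducing the prefactor $e^{\overline{z}_1-z_1}$ in the first place.

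Given this form, $\overline{\partial}\varphi_t=0$ is immediate: $\overline{\partial}(t\,e^{-2z_1})=0$ by holomorphicity and $\overline{\partial}(d\overline{z}_2)=0$, while $\partial/\partial z_3$ is a holomorphic vector field, so the $\overline{\partial}$-operator on $A^{0,1}(N,T^{1,0}N)$ annihilates $\varphi_t$.

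Next I would compute $[[\varphi_t,\varphi_t]]$. Writing $\varphi_t=f\otimes V$ with $f=t\,e^{-2z_1}d\overline{z}_2$ and $V=\partial/\partial z_3$, every term of the Schouten bracket contains one of the factors $f\wedge f$, $[V,V]$ or $L_V f$. The first vanishes because $d\overline{z}_2\wedge d\overline{z}_2=0$, the second is trivially zero, and the third vanishes because the coefficient $e^{-2z_1}$ is independent of $z_3$ and $\iota_{\partial/\partial z_3}d\overline{z}_2=0$. Hence $[[\varphi_t,\varphi_t]]=0$ and the Maurer--Cartan equation holds for trivial reasons.

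The only potentially delicate point is the sign and coefficient conventions in the Schouten--Fr\"olicher--Nijenhuis bracket on $A^{0,1}(N,T^{1,0}N)$; to avoid any ambiguity I would present, as a cross-check, the equivalent direct verification of integrability via the Newlander--Nirenberg theorem. The deformed $(0,1)$-tangent distribution is spanned by
$$\frac{\partial}{\partial \overline{z}_1},\qquad \frac{\partial}{\partial \overline{z}_2}+t\,e^{-2z_1}\frac{\partial}{\partial z_3},\qquad \frac{\partial}{\partial \overline{z}_3},$$
and each pairwise Lie bracket vanishes because $e^{-2z_1}$ is holomorphic and independent of $\overline{z}_1$ and $\overline{z}_3$. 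Integrability is equivalent to the Maurer--Cartan equation, so this confirms the lemma.
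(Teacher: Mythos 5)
Your proof is correct and follows essentially the same route as the paper: rewrite $\varphi_t$ in the flat coordinates as $t\,e^{-2z_1}\,d\overline{z}_2\otimes\partial/\partial z_3$, observe that the coefficient is holomorphic so $\delbar\varphi_t=0$, and note that $[[\varphi_t,\varphi_t]]=0$. The paper simply states these two facts without the justifications you supply for the vanishing of the bracket, and your Newlander--Nirenberg cross-check is a harmless (correct) addition.
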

\begin{proof}
By definition, $\varphi_t=t e^{-2z_1}d\overline{z}_2\otimes\frac{\partial}{\partial z_3}$. Therefore
$$
\delbar\varphi_t =\delbar (t e^{-2z_1}d\overline{z}_2)\otimes\frac{\partial}{\partial z_3} = 0
$$
and
$$
[[\varphi_t,\varphi_t]]=0.
$$
\end{proof}
According to Lemma \ref{deformed-complex-structure}, $\varphi_t$ determines an integrable complex structure $J_t$, for 
$t\in \mathbb{B}(0,\varepsilon)$. Denote by $N_t=(N,J_t)$.
\begin{lemma}
The following complex $1$-differential forms 
\begin{equation}\label{t-complex-forms}
\begin{array}{lll}
\Phi_1^{1,0}(t):=dz_1, &  \Phi_2^{1,0}(t):=e^{-z_1}dz_2, & \Phi_3^{1,0}(t):=e^{z_1}dz_3-te^{-z_1}d\overline{z}_2,\\[10pt]
\Phi_1^{0,1}(t):=d\overline{z}_1, &  \Phi_2^{0,1}(t):=e^{-z_1}d\overline{z}_2, & \Phi_3^{0,1}(t):=e^{z_1}d\overline{z}_3-
\overline{t}e^{z_1-2\overline{z_1}}dz_2,
\end{array}
\end{equation}
define a global coframe of $(1,0)$-forms, $(0,1)$-forms respectively on $N_t$. Furthermore,
\begin{equation}\label{dbart-complex-forms}
\begin{array}{lll}
\delbar_t\Phi_1^{1,0}(t)=0, &  \delbar_t\Phi_2^{1,0}(t)=0, & \delbar_t\Phi_3^{1,0}(t)=2t\Phi^{1,0}_1(t)\wedge\Phi^{0,1}_2(t),\\[10pt]
\delbar_t\Phi_1^{0,1}(t)=0, &  \delbar_t\Phi_2^{0,1}(t)=0, & \delbar_t\Phi_3^{0,1}(t)=0,
\end{array}
\end{equation}
\end{lemma}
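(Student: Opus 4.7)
The plan is to verify both the coframe assertion and the differential identities by reducing everything to computations in the ambient coordinates $(z_1,z_2,z_3)$ on $\mathbb{C}^3$, using the fact that the forms in \eqref{t-complex-forms} are built from $G$-invariant primitives and therefore descend to $N$.

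First I would make the deformed structure $J_t$ concrete. Since $\varphi_t = t e^{-2z_1}\,d\overline{z}_2\otimes \partial/\partial z_3$, the standard Kodaira--Spencer prescription tells us that
\begin{equation*}
T^{0,1}_t\;=\;\mathrm{Span}_{\mathbb{C}}\!\left\langle W_1=\tfrac{\partial}{\partial \overline{z}_1},\ \ W_2=\tfrac{\partial}{\partial \overline{z}_2}+t e^{-2z_1}\tfrac{\partial}{\partial z_3},\ \ W_3=\tfrac{\partial}{\partial \overline{z}_3}\right\rangle,
\end{equation*}
with $T^{1,0}_t$ spanned by the complex conjugates $\overline{W}_j$. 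The coframe property of \eqref{t-complex-forms} is then a three-line pairing check: each $\Phi^{1,0}_k(t)$ must annihilate $W_1,W_2,W_3$, and the only nontrivial evaluation is $\Phi^{1,0}_3(t)(W_2) = e^{z_1}\cdot t e^{-2z_1} - t e^{-z_1}\cdot 1 = 0$, which is precisely why the correction term $-t e^{-z_1}d\overline{z}_2$ appears. The conjugate check for the $\Phi^{0,1}_k(t)$ is analogous; linear independence and $G$-invariance are inherited from the classical Nakamura coframe.

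For the differential identities \eqref{dbart-complex-forms} I would compute $d\Phi^{1,0}_k(t)$ and $d\Phi^{0,1}_k(t)$ in $(z_j,\overline{z}_j)$-coordinates, then reexpress the resulting $2$-forms in the basis $\{\Phi^{1,0}_j(t),\Phi^{0,1}_j(t)\}$ and read off the $(1,1)_t$ and $(0,2)_t$ components. The cases $\Phi^{1,0}_1(t)$, $\Phi^{1,0}_2(t)$, $\Phi^{0,1}_1(t)$ are obvious. For $\Phi^{1,0}_3(t)$, using $e^{z_1}dz_3 = \Phi^{1,0}_3(t)+t e^{-z_1}d\overline{z}_2$ one finds
\begin{equation*}
d\Phi^{1,0}_3(t)=\Phi^{1,0}_1(t)\wedge\Phi^{1,0}_3(t)+2t\,\Phi^{1,0}_1(t)\wedge\Phi^{0,1}_2(t),
\end{equation*}
whose $(1,1)_t$-part yields the claimed formula. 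For $\Phi^{0,1}_2(t)$ and $\Phi^{0,1}_3(t)$ the same algebraic move (substitute $e^{z_1}d\overline{z}_3=\Phi^{0,1}_3(t)+\overline{t}e^{z_1-2\overline{z}_1}dz_2$) shows that $d\Phi^{0,1}_2(t)$ is purely of type $(1,1)_t$ and $d\Phi^{0,1}_3(t)$ has no $(0,2)_t$-component, so both $\overline{\partial}_t$-derivatives vanish.

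The only step that requires genuine care is the rewriting for $\Phi^{1,0}_3(t)$ and $\Phi^{0,1}_3(t)$: one must watch the signs and the exponential factors carefully, because the seemingly extra term $\overline{t}e^{z_1-2\overline{z}_1}dz_2$ inside $\Phi^{0,1}_3(t)$ contributes pieces that exactly cancel the $dz_1\wedge dz_2$ contribution from $d(e^{z_1}d\overline{z}_3)$, leaving only type $(1,1)_t$ terms. Everything else is bookkeeping, so this algebraic cancellation is the single substantive point in the proof.
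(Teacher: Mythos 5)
Your proposal is correct and follows essentially the same route as the paper: the paper obtains the coframe from the Kodaira--Spencer prescription $\varphi^j-\varphi_t(\varphi^j)$ (you verify the dual statement by pairing against the deformed $T^{0,1}_t$-frame, which is the same computation seen from the other side), and the differential identities are in both cases a direct computation of $d$ followed by re-expression in the deformed coframe. Your explicit checks, including $\Phi_3^{1,0}(t)(W_2)=0$ and the cancellation of the $dz_1\wedge dz_2$ terms in $d\Phi_3^{0,1}(t)$, are accurate.
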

\begin{proof} (I)
By the Kodaira and Spencer theory of small deformations of complex structures, 
$$
\{\varphi^j-\varphi_t(\varphi^j)\,\,\vert \,\,j=1,2,3\}
$$
is a 
coframe of 
$(1,0)$-forms on $N_t$, for $t\in \mathbb{B}(0,\varepsilon)$ (see e.g., \cite[p.75]{GHJ}). Therefore,
$$\begin{array}{l}
\varphi^1-\varphi_t(\varphi^1)= dz_1=:\Phi_1^{1,0}(t), \\[10pt]
\varphi^2-\varphi_t(\varphi^2)= e^{-z_1}dz_2=:\Phi_2^{1,0}(t),\\[10pt]
\varphi^3-\varphi_t(\varphi^3)= e^{z_1}dz_3-te^{-z_1}d\overline{z}_2=:\Phi_3^{1,0}(t)
\end{array}
$${t-complex-forms}
is a complex $(1,0)$-coframe on $N_t$. It is immediate to check that
$$
\Phi_1^{0,1}(t)=\overline{\Phi_1^{1,0}(t)},\quad\Phi_2^{0,1}(t)=e^{\overline{z}_1-z_1}\overline{\Phi_2^{1,0}(t)},\quad
\Phi_3^{0,1}(t)=e^{-\overline{z}_1+z_1}\overline{\Phi_3^{1,0}(t)}.
$$
(II) The proof of \ref{dbart-complex-forms} is a straightforward computation. 
\end{proof}
\begin{lemma}\label{symmetric-symplectic}
The following $2$-form on $N_t$
$$ 
\omega_t:=\frac{i}{2}\left(\Phi_1^{1,0}(t)\wedge\overline{\Phi_1^{1,0}(t)}\right)+\frac12\left(\Phi_2^{0,1}(t)\wedge \Phi_3^{1,0}(t)
+\overline{\Phi_2^{0,1}(t)}\wedge\overline{\Phi_3^{1,0}(t)}\right)
$$
defines a $J_t$-symmetric symplectic structure on $N_t$.
\end{lemma}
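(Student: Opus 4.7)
My plan is to exploit the fact that $\omega_t$, although written in the $\Phi$-frame adapted to $J_t$, coincides as a smooth real $2$-form with the original Nakamura symplectic form $\omega_N$ on the underlying manifold $N$. Closedness and non-degeneracy will then be inherited for free from $\omega_N$, while $J_t$-invariance is manifest from the defining $\Phi$-expression.

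First I would substitute the explicit formulas from \eqref{t-complex-forms} into the definition of $\omega_t$. The crucial observation is that the $t$-correction in $\Phi_3^{1,0}(t)=e^{z_1}dz_3-te^{-z_1}d\bar z_2$ is killed by the wedge with $\Phi_2^{0,1}(t)=e^{-z_1}d\bar z_2$ (since $d\bar z_2\wedge d\bar z_2=0$), so
\[
\Phi_2^{0,1}(t)\wedge \Phi_3^{1,0}(t) \;=\; d\bar z_2\wedge dz_3,
\]
and by conjugation $\overline{\Phi_2^{0,1}(t)}\wedge \overline{\Phi_3^{1,0}(t)} = dz_2\wedge d\bar z_3$. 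Combined with $\Phi_1^{1,0}(t)\wedge \overline{\Phi_1^{1,0}(t)}=dz_1\wedge d\bar z_1$, these identities collapse $\omega_t$ to a $t$-independent expression which coincides with $\omega_N$ on $N$. Since $\omega_N$ is real, $d$-closed, and satisfies $\omega_N^3\neq 0$ everywhere as recalled above, the same properties hold for $\omega_t$.

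For $J_t$-invariance I would observe that each of the three summands in the defining formula is visibly a wedge of a $(1,0)$-form with a $(0,1)$-form with respect to the $J_t$-decomposition: $\overline{\Phi_j^{1,0}(t)}$ is of type $(0,1)$ in $J_t$ and $\overline{\Phi_k^{0,1}(t)}$ is of type $(1,0)$ in $J_t$ by definition of the $J_t$-splitting. Hence $\omega_t$ lies in $A^{1,1}_{J_t}(N)$ and is $J_t$-symmetric.

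The only step requiring actual computation is the cancellation identity showing $\omega_t=\omega_N$ in standard coordinates; although completely elementary, it is the pivot of the argument, since it is precisely where the deformation parameter $t$ quietly disappears, and I would write it out carefully. Everything else is bookkeeping with bidegrees, and no new estimate on the size of $t$ is needed.
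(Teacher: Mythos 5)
Your proposal is correct and is essentially the paper's own proof: both substitute the explicit formulas from \eqref{t-complex-forms}, note that the $t$-corrections vanish against $d\overline{z}_2\wedge d\overline{z}_2=0$ so that $\omega_t$ collapses to the $t$-independent standard Nakamura form $\frac{i}{2}dz_1\wedge d\overline{z}_1+\frac12(d\overline{z}_2\wedge dz_3+dz_2\wedge d\overline{z}_3)$, whence $d\omega_t=0$ and $\omega_t^3\neq 0$, while reality and $J_t$-type $(1,1)$ are manifest from the defining $\Phi$-expression.
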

\begin{proof}
By definition, $\omega_t$ is $(1,1)$-form with respect to $J_t$ and real. We have
$$\begin{array}{ll}
\omega_t&=\frac{i}{2}(dz_1\wedge d\overline{z}_1)+\frac12e^{-z_1}d\overline{z}_2\wedge(e^{z_1}dz_3-te^{-z_1}d\overline{z}_2)+
\frac12e^{-\overline{z}_1}dz_2\wedge(e^{\overline{z}_1}d\overline{z}_3-\overline{t}e^{-\overline{z}_1}dz_2)\\[10pt]
&=\frac{i}{2}(dz_1\wedge d\overline{z}_1) +\frac12(d\overline{z}_2\wedge dz_3+ dz_2\wedge d\overline{z}_3).
\end{array}
$$
Hence,
$\omega_t^3\neq 0$ and $d\omega_t=0$. 
\end{proof}
By the previous Lemma, $\omega_t=\omega$.
\begin{lemma}\label{massey}
There exists a non vanishing Dolbeault Massey product on $N_t$, for $t\neq 0$, $t\in\mathbb{B}(0,\varepsilon)$. 
\end{lemma}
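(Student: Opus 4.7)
The plan is to exhibit an explicit non-vanishing Dolbeault-Massey triple product on $N_t$ for $t\neq 0$, exploiting the structure equation
$$
\delbar_t \Phi_3^{1,0}(t) = 2t\,\Phi_1^{1,0}(t)\wedge\Phi_2^{0,1}(t)
$$
from \eqref{dbart-complex-forms}. This equation says precisely that the cup product $[\Phi_1^{1,0}(t)]\cdot[\Phi_2^{0,1}(t)]$ vanishes in $H^{1,1}_{\delbar_t}(N_t)$, with $\frac{1}{2t}\Phi_3^{1,0}(t)$ providing a primitive --- the canonical set-up for a Massey product, and the deformation parameter $t$ is exactly what makes the primitive available.

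First I set $\mathfrak{a}:=[\Phi_1^{1,0}(t)]\in H^{1,0}_{\delbar_t}(N_t)$ and $\mathfrak{b}:=\mathfrak{c}:=[\Phi_2^{0,1}(t)]\in H^{0,1}_{\delbar_t}(N_t)$; all representatives are $\delbar_t$-closed by \eqref{dbart-complex-forms}. Since $\Phi_2^{0,1}(t)\wedge\Phi_2^{0,1}(t)=0$, the product $\mathfrak{b}\cdot\mathfrak{c}$ vanishes tautologically with primitive $g:=0$, while $\mathfrak{a}\cdot\mathfrak{b}=0$ with primitive $f:=\frac{1}{2t}\Phi_3^{1,0}(t)$. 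Substituting in the definition of the Dolbeault-Massey triple product (and noting that with bidegree $(p,q)=(1,0)$ for $\alpha$ the sign $(-1)^{p+q+1}$ is $+1$), one obtains
$$
\langle\mathfrak{a},\mathfrak{b},\mathfrak{c}\rangle \;=\; \Big[\tfrac{1}{2t}\,\Phi_3^{1,0}(t)\wedge\Phi_2^{0,1}(t)\Big]\;\in\; \frac{H^{1,1}_{\delbar_t}(N_t)}{H^{1,0}_{\delbar_t}(N_t)\cdot H^{0,1}_{\delbar_t}(N_t)},
$$
and using \eqref{dbart-complex-forms} again one checks directly that the representative is $\delbar_t$-closed. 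A short computation in the original coordinates identifies the representative as the constant-coefficient form $\frac{1}{2t}\,dz_3\wedge d\overline{z}_2$, which will be useful for the comparison with other classes.

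The real work lies in showing that this class is nonzero in the displayed quotient. This requires sufficient information on $H^{1,0}_{\delbar_t}(N_t)$, $H^{0,1}_{\delbar_t}(N_t)$ and $H^{1,1}_{\delbar_t}(N_t)$. Since $G=\C\ltimes\C^2$ is solvable but not nilpotent, the complex-nilpotent part of Theorem B does not apply directly, and the standard Nomizu reduction is unavailable. Instead I would adapt Kasuya's strategy --- the one used for the central fiber to identify $H^{\bullet,\bullet}_{\delbar}(N)$ with the finite-dimensional subcomplex $C_\Gamma$ --- to the deformed complex structure $J_t$: after passing, if necessary, to a finite-index sublattice of $\Gamma$, one builds a finite-dimensional subcomplex of semi-invariant forms (a $t$-deformation of $C_\Gamma$) whose cohomology computes $H^{\bullet,\bullet}_{\delbar_t}(N_t)$. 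With such a model in hand, one enumerates representatives of generators of the three cohomology groups above and verifies by a finite linear-algebra check that no linear combination of products $(1,0)\wedge(0,1)$ of these generators is $\delbar_t$-cohomologous to $\Phi_3^{1,0}(t)\wedge\Phi_2^{0,1}(t)$.

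The main obstacle is the identification of the correct Kasuya-type subcomplex for the family $\{J_t\}$ and the control of the Dolbeault cohomology across the deformation: once the finite-dimensional model is in place, the non-triviality of $\langle\mathfrak{a},\mathfrak{b},\mathfrak{c}\rangle$ reduces to a concrete, if tedious, linear-algebra verification, and the lemma follows.
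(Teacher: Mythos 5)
Your proposal is correct and follows essentially the same route as the paper: the same triple product $\langle[\Phi_1^{1,0}(t)],[\Phi_2^{0,1}(t)],[\Phi_2^{0,1}(t)]\rangle$ (up to the harmless scalar $2t$), the same primitives $f=\frac{1}{2t}\Phi_3^{1,0}(t)$, $g=0$, and the same plan of computing $H^{1,0}_{\delbar_t}$, $H^{0,1}_{\delbar_t}$, $H^{1,1}_{\delbar_t}$ via a finite-dimensional Kasuya-type subcomplex. The paper just makes your last step concrete: it takes $C_t^{\bullet,\bullet}=\Lambda^{\bullet,\bullet}$ of the span of the $\Phi^{1,0}_j(t),\Phi^{0,1}_j(t)$, invokes Angella--Kasuya (Theorem 1 of \cite{AK}) to get $H^{\bullet,\bullet}_{\delbar_t}(C_t^{\bullet,\bullet})\simeq H^{\bullet,\bullet}_{\delbar_t}(N_t)$, observes that $\Phi_3^{1,0}(t)\wedge\Phi_2^{0,1}(t)$ is $\Box^{g_t}_{\delbar_t}$-harmonic, and reads off $H^{1,0}\simeq\Span_\C\langle\Phi_1^{1,0}(t),\Phi_2^{1,0}(t)\rangle$ and $H^{0,1}\simeq\Span_\C\langle\Phi_1^{0,1}(t),\Phi_2^{0,1}(t),\Phi_3^{0,1}(t)\rangle$ to conclude the class survives the quotient.
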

\begin{proof}
Consider the following Dolbeault classes on $N_t$ defines respectively as
$$a=[2t\Phi^{1,0}_1(t)],\quad b=[\Phi^{0,1}_2(t)], \quad c=[\Phi^{0,1}_2(t)].
$$
Then, $a\cdot b=0, \,b\cdot c=0$. Indeed, 
$$
a\cdot b=[2t\Phi^{1,0}_1(t)\wedge\Phi^{0,1}_2(t)]=[\delbar_t\Phi^{1,0}_3(t)],\quad
b\cdot c=[\Phi^{0,1}_2(t)\wedge \Phi^{0,1}_2(t)]=[0].
$$
Therefore, the Dolbeault triple product $\langle a, b, c\rangle$ is given by
$$
\langle a, b, c\rangle =[\Phi^{1,0}_3(t)\wedge \Phi^{0,1}_2(t)]\in \frac{H^{1,1}_{\delbar_t}(N_t)}{H^{1,0}_{\delbar_t}(N_t)\cdot H^{0,1}_{\delbar_t}(N_t)}
$$
A direct computation shows that $\Phi^{1,0}_3(t)\wedge \Phi^{0,1}_2(t)$ is $\Box^{g_t}_{\delbar_t}$-harmonic, where 
$$
g_t=\sum_{j=1}^3 \Phi^{1,0}_j(t)\otimes \overline{\Phi^{1,0}_j(t)};
$$
consequently, the Dolbeault class $[\Phi^{1,0}_3(t)\wedge \Phi^{0,1}_2(t)]$ does not vanish in $H^{1,1}_{\delbar_t}(N_t)$. \newline 
Let us show that 
$[\Phi^{1,0}_3(t)\wedge \Phi^{0,1}_2(t)]\notin H^{1,0}_{\delbar_t}(N_t)\cdot H^{0,1}_{\delbar_t}(N_t)$.
Set
$$
C_t^{\bullet,\bullet}:=\Lambda^{\bullet,\bullet}(\Span_\C\langle \Phi^{1,0}_1(t),\Phi^{1,0}_2(t),\Phi^{1,0}_3(t)\rangle
\oplus \Span_\C\langle \Phi^{0,1}_1(t),\Phi^{0,1}_2(t),\Phi^{0,1}_3(t)\rangle);
$$
then $C_t^{\bullet,\bullet}$ satisfies the assumptions of \cite[Theorem 1]{AK}. Consequently,
$$
H^{\bullet,\bullet}_{\delbar_t}(C_t^{\bullet,\bullet})\simeq H^{\bullet,\bullet}_{\delbar_t}(N_t).
$$
Explicitly, 
$$
H^{1,0}_{\delbar_t}(N_t)\simeq \Span_\C\langle \Phi^{1,0}_1(t),\Phi^{1,0}_2(t)\rangle,\quad
H^{0,1}_{\delbar_t}(N_t)\simeq \Span_\C\langle \Phi^{0,1}_1(t),\Phi^{0,1}_2(t),\Phi^{0,1}_3(t)\rangle,
$$
and all the representatives are Dolbeault harmonic with respect to the Hermitian metric $g_t$. Therefore, 
$[\Phi^{1,0}_3(t)\wedge \Phi^{0,1}_2(t)]\notin H^{1,0}_{\delbar_t}(N_t)\cdot H^{0,1}_{\delbar_t}(N_t)$ and
$\langle a, b, c\rangle\neq 0$.
\end{proof}
\begin{lemma}\label{non-HLC}
If $t\neq 0$, $t\in\mathbb{B}(0,\varepsilon)$, then $(N,J_t,\omega_t)$ does not satisfy the $\overline{\partial}_t\,\overline{\partial}_t^{\Lambda}$-Lemma.
\end{lemma}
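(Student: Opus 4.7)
The plan is to invoke Theorem~\ref{th4.1}, which reduces the failure of the $\overline{\partial}_t\,\overline{\partial}_t^{\Lambda}$-Lemma to the failure of the Hard Lefschetz Condition on $\bigl(\bigoplus_k H_{\dbar_t}^k(N_t),L_{\omega_t},\dbar_t\bigr)$. Since $\dim_{\mathbb C} N_t=3$, it suffices to produce a nonzero class in $H^{1}_{\dbar_t}(N_t)$ whose image under $L^{2}_{\omega_t}$ vanishes in $H^{5}_{\dbar_t}(N_t)$.

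The first step is to rewrite $\omega_t$ in the deformed invariant coframe of Lemma~\ref{symmetric-symplectic}. A direct computation, using the relations $\overline{\Phi^{1,0}_2(t)}=e^{z_1-\overline z_1}\Phi^{0,1}_2(t)$ and $\overline{\Phi^{1,0}_3(t)}=e^{\overline z_1-z_1}\Phi^{0,1}_3(t)$, shows that the conformal factors cancel pairwise and, abbreviating $e_j:=\Phi^{1,0}_j(t)$, $f_j:=\Phi^{0,1}_j(t)$,
$$\omega_t=\tfrac{i}{2}\,e_1\wedge f_1+\tfrac12\bigl(f_2\wedge e_3+e_2\wedge f_3\bigr).$$
Thus $\omega_t$ has \emph{constant} coefficients in this frame and sits inside the finite-dimensional subcomplex $C_t^{\bullet,\bullet}$ introduced in the proof of Lemma~\ref{massey}, which by \cite[Theorem~1]{AK} computes $H^{\bullet,\bullet}_{\dbar_t}(N_t)$; this is what legitimates carrying out all subsequent Lefschetz computations inside $C_t^{\bullet,\bullet}$.

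The second step is to single out the class $[f_2]\in H^{0,1}_{\dbar_t}(N_t)$. By \eqref{dbart-complex-forms} every generator of $C_t^{\bullet,\bullet}$ is $\dbar_t$-closed except for $\dbar_t e_3=2t\,e_1\wedge f_2$, so $f_2$ is $\dbar_t$-closed and, there being no possible primitive in $C_t^{0,0}=\mathbb C$, the class $[f_2]$ is non-trivial. Expanding $\omega_t^{\,2}$ in the $\{e_j,f_j\}$-basis (each of $e_1\wedge f_1$, $f_2\wedge e_3$, $e_2\wedge f_3$ squares to zero, so only three cross-terms survive) a two-line calculation gives
$$\omega_t^{\,2}\wedge f_2=\tfrac{i}{2}\,e_1\wedge e_2\wedge f_1\wedge f_2\wedge f_3.$$
On the other hand, applying the Leibniz rule together with $\dbar_t e_3=2t\,e_1\wedge f_2$ yields $\dbar_t(e_2\wedge e_3\wedge f_1\wedge f_3)=-2t\,e_1\wedge e_2\wedge f_1\wedge f_2\wedge f_3$, and therefore
$$\omega_t^{\,2}\wedge f_2=-\tfrac{i}{4t}\,\dbar_t\bigl(e_2\wedge e_3\wedge f_1\wedge f_3\bigr)$$
is $\dbar_t$-exact for every $t\neq 0$. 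Hence $L^{2}_{\omega_t}[f_2]=0$ in $H^{2,3}_{\dbar_t}(N_t)$ while $[f_2]\neq 0$ in $H^{0,1}_{\dbar_t}(N_t)$, so $L^{2}_{\omega_t}\colon H^{1}_{\dbar_t}(N_t)\to H^{5}_{\dbar_t}(N_t)$ fails to be injective, the Hard Lefschetz Condition fails, and Theorem~\ref{th4.1} gives the conclusion.

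The only genuinely delicate step is the rewriting of $\omega_t$ with constant coefficients in the deformed invariant frame, which is what forces the whole cohomological computation to take place inside the finite-dimensional model $C_t^{\bullet,\bullet}$; once this is available, producing the explicit primitive for $\omega_t^{\,2}\wedge f_2$ (and observing that its denominator $t$ blows up as $t\to 0$, in line with the fact that HLC does hold on $N_0$) is purely a Leibniz-rule bookkeeping exercise.
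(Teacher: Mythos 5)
Your proof is correct and follows essentially the same route as the paper: both reduce to the failure of the Hard Lefschetz Condition via Theorem \ref{th4.1}, take the class of $\Phi^{0,1}_2(t)$, and exhibit the same $\dbar_t$-primitive $e_2\wedge e_3\wedge f_1\wedge f_3=\Phi_3^{1,0}(t)\wedge\Phi_1^{0,1}(t)\wedge\overline{\Phi_2^{0,1}(t)}\wedge\overline{\Phi_3^{1,0}(t)}$ for $\omega_t^2\wedge\Phi^{0,1}_2(t)$ (your constant $-\tfrac{i}{4t}$ is in fact the correct normalization; the paper's $-\tfrac{i}{2t}$ is off by a harmless factor of $2$). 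The only cosmetic difference is that you certify $[\Phi^{0,1}_2(t)]\neq 0$ via the finite-dimensional model $C_t^{\bullet,\bullet}$ while the paper cites harmonicity with respect to $g_t$; both justifications already appear elsewhere in the paper.
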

\begin{proof}
Let $\eta$ be the $J_t$-$(0,1)$-form on $N_t$ defined by $\eta:=\Phi^{0,1}_2(t)$. Then $\eta$ is $\Box^{g_t}_{\delbar_t}$-harmonic. Let us compute $\omega^2_t\wedge\eta$.
We immediately get:
\begin{eqnarray*}
\omega^2_t&=&\frac{i}{2}\left(\Phi_1^{1,0}(t)\wedge\overline{\Phi_1^{1,0}(t)}\wedge\Phi_2^{0,1}(t)
\wedge\Phi_3^{1,0}(t)+\Phi_1^{1,0}(t)\wedge\overline{\Phi_1^{1,0}(t)}\wedge\overline{\Phi_2^{0,1}(t)}\wedge
\overline{\Phi_3^{1,0}(t)}\,\right)\\[10pt]
&{}& + \ \frac12 \Phi_2^{0,1}(t)\wedge\Phi_3^{1,0}(t)\wedge\overline{\Phi_2^{0,1}(t)}\wedge
\overline{\Phi_3^{1,0}(t)}.
\end{eqnarray*}
Therefore,
$$
\omega^2_t\wedge\eta=\omega^2_t\wedge\Phi^{0,1}_2(t)=
-\frac{i}{2}\Phi_1^{1,0}(t)\wedge\Phi_2^{0,1}(t)\wedge\Phi_1^{0,1}(t)\wedge\overline{\Phi_2^{0,1}(t)}\wedge
\overline{\Phi_3^{1,0}(t)}
$$
For $t\neq 0$, in view of \eqref{dbart-complex-forms} and \eqref{t-complex-forms}, we get:
$$
\begin{array}{lll}
\delbar_t\overline{\Phi_3^{1,0}(t)}&=&\Phi_1^{0,1}(t)\wedge\overline{\Phi_3^{1,0}(t)},\qquad
\delbar_t\overline{\Phi_2^{0,1}(t)}=-\Phi_1^{0,1}(t)\wedge\overline{\Phi_2^{0,1}(t)}\\[10pt]
\frac{1}{2t}\delbar_t\Phi_3^{1,0}(t)&=&\Phi_1^{1,0}(t)\wedge\Phi_2^{0,1}(t).
\end{array}
$$
Thus:
$$
\omega^2_t\wedge\eta=-\frac{i}{2t}\delbar_t\left(\Phi_3^{1,0}(t)\wedge\Phi_1^{0,1}(t)\wedge\overline{\Phi_2^{0,1}(t)}\wedge
\overline{\Phi_3^{1,0}(t)}\,\right),
$$
that is the Dolbeault class $[\omega^2_t\wedge\eta]$ vanishes in $H^{2,3}_{\delbar_t}(N_t)$ for 
$t\neq 0$, $t\in\mathbb{B}(0,\varepsilon)$. Therefore $(N,\omega_t)$ does not satisfy HLC and consequently $(N, J_t,\omega_t)$ does not satisfy the $\overline{\partial}_t\,\overline{\partial}_t^{\Lambda}$-Lemma.
\end{proof}
Summimg up, we have proved the following:
\begin{theorem}\label{7.6}
Let $N$ be the be the differentiable manifold underlying the Nakamura manifold $\Gamma\backslash \C^3$. Then there exists a $1$-parameter complex family 
of complex structures $J_t$ on $N$ and a symplectic structure $\omega$, for $t\in\mathbb{B}(0,\varepsilon)$ such that, 
\begin{itemize}
 \item $J_0=J_M$.
 \item $(N,J_N,\omega)$ satifies the $\overline{\partial}\,\overline{\partial}^\Lambda$-Lemma and the complex manifold $(N,J_N)$ is geometrically Dolbeault 
 formal.
 \item For $t\in\mathbb{B}(0,\varepsilon), t\neq 0$, $(N,J_t,\omega_t)$ does not satifies the $\overline{\partial}_t\,\overline{\partial}_t^{\Lambda}$-Lemma and it is not Dolbeault 
 formal.
\end{itemize}
\end{theorem}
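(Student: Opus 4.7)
The plan is to treat Theorem~\ref{7.6} as a compilation of the preceding lemmas in Section~\ref{se:defor}, matching each bullet in the conclusion to a specific earlier result so that no new computation is needed here.

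First I would assemble the family $\{J_t\}_{t\in\mathbb{B}(0,\varepsilon)}$ together with the symplectic form. The integrability of $J_t$ is Lemma~\ref{deformed-complex-structure}, which checks the Maurer--Cartan equation for the Beltrami differential $\varphi_t=te^{\overline{z}_1-z_1}\overline{\varphi^2}\otimes\zeta_3$; the associated coframes $\{\Phi_j^{1,0}(t),\Phi_j^{0,1}(t)\}$ and their $\dbar_t$-differentials follow from Kodaira--Spencer theory, and Lemma~\ref{symmetric-symplectic} verifies that when $\omega_t$ is rewritten in the coordinates $(z_1,z_2,z_3)$ one recovers exactly the original $\omega_N$, so in fact $\omega_t=\omega_N$ for every $t$ and the form is simultaneously symplectic and of type $(1,1)$ with respect to every $J_t$.

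For $t=0$ the first bullet is immediate since $\varphi_0=0$ gives $J_0=J_N$. The second bullet follows from the preliminary discussion in Section~\ref{se:defor}: the Dolbeault cohomology of $N$ is computed through the Kasuya subcomplex $C_\Gamma$, one checks directly that $L^k:H_{\dbar}^{3-k}(N)\to H_{\dbar}^{3+k}(N)$ is an isomorphism for $0\le k\le 3$, whence Theorem~\ref{th4.1} supplies the $\dbar\,\dbar^\Lambda$-Lemma; moreover the Dolbeault harmonic space for $g=\sum\varphi^j\otimes\overline{\varphi^j}$ coincides with the algebra $C_\Gamma$, so $(N,J_N)$ is geometrically Dolbeault formal.

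For the third bullet the non-formality statement is Lemma~\ref{massey}: the triple product $\langle a,b,c\rangle$ is non-zero in the appropriate quotient of $H^{1,1}_{\dbar_t}(N_t)$, and the general principle that Dolbeault--Massey products vanish on any Dolbeault formal manifold then rules out formality of $N_t$ for $t\neq0$. The failure of the $\dbar_t\,\dbar_t^\Lambda$-Lemma is Lemma~\ref{non-HLC}: the $\dbar_t$-harmonic form $\eta=\Phi_2^{0,1}(t)$ satisfies $\omega_t^2\wedge\eta\in{\rm Im}\,\dbar_t$, so the Lefschetz map $L^2:H^{0,1}_{\dbar_t}(N_t)\to H^{2,3}_{\dbar_t}(N_t)$ is not injective, and Theorem~\ref{th4.1} closes the argument.

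The main obstacle is already dealt with inside those two lemmas: establishing non-triviality of the Massey class requires both non-vanishing of the representative in $H^{1,1}_{\dbar_t}$ (verified by harmonicity with respect to the deformed metric $g_t=\sum\Phi^{1,0}_j(t)\otimes\overline{\Phi^{1,0}_j(t)}$) and the indeterminacy check that $[\Phi^{1,0}_3(t)\wedge\Phi^{0,1}_2(t)]$ lies outside $H^{1,0}_{\dbar_t}(N_t)\cdot H^{0,1}_{\dbar_t}(N_t)$; this is handled through the explicit finite-dimensional subcomplex $C_t^{\bullet,\bullet}$ together with the injectivity result \cite[Theorem 1]{AK}. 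Once these are in place, Theorem~\ref{7.6} follows by bookkeeping and no further analysis is required.
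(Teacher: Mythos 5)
Your proposal is correct and follows essentially the same route as the paper: the paper's "proof" of Theorem~\ref{7.6} is literally the phrase "Summing up," i.e.\ a compilation of Lemma~\ref{deformed-complex-structure}, Lemma~\ref{symmetric-symplectic}, Lemma~\ref{massey} and Lemma~\ref{non-HLC}, together with the preliminary discussion establishing that $(N,J_N,\omega_N)$ satisfies the $\dbar\,\dbar^\Lambda$-Lemma and is geometrically Dolbeault formal. Your bookkeeping of which lemma supplies which bullet, including the role of Theorem~\ref{th4.1} and the indeterminacy check via $C_t^{\bullet,\bullet}$ and \cite[Theorem 1]{AK}, matches the paper's argument.
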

As a corollary, we obtain the following
\begin{theorem}\label{7.7}
The $\overline{\partial}\,\overline{\partial}^\Lambda$-Lemma is an unstable property under small deformations of the complex structure.
\end{theorem}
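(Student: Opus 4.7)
The plan is to derive Theorem 7.7 essentially as an immediate corollary of Theorem 7.6. The notion of stability under small deformations of the complex structure means: if $(X, J_0, \omega)$ satisfies the $\overline{\partial}\,\overline{\partial}^\Lambda$-Lemma and $\{J_t\}_{|t|<\varepsilon}$ is a smooth family of integrable complex structures with $J_0 = J_N$, together with a family $\omega_t$ of $J_t$-compatible symplectic $(1,1)$-forms with $\omega_0 = \omega$, then the $\overline{\partial}_t\,\overline{\partial}_t^\Lambda$-Lemma should persist for all sufficiently small $t$. To show instability, it suffices to exhibit one such family in which the Lemma holds at $t=0$ but fails for arbitrarily small $t \neq 0$.

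This is exactly the content of Theorem 7.6. The family $\{J_t\}_{t \in \mathbb{B}(0,\varepsilon)}$ constructed via the Beltrami differential $\varphi_t = t e^{\overline{z}_1 - z_1}\overline{\varphi^2}\otimes \zeta_3$ depends smoothly (in fact, holomorphically) on $t$, since $\varphi_t$ is linear in $t$; Lemma \ref{deformed-complex-structure} guarantees integrability for every $t$ in the disc. Moreover, by the explicit computation in Lemma \ref{symmetric-symplectic}, the family $\omega_t$ reduces to the single, $t$-independent symplectic form $\omega$, which is of type $(1,1)$ with respect to every $J_t$. Thus we have a genuine smooth family of triples $(N, J_t, \omega)$ where the complex structure is the only varying ingredient.

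I would then simply concatenate the outputs: at $t=0$ the $\overline{\partial}\,\overline{\partial}^\Lambda$-Lemma holds (established in Section 7.1 via the Hard Lefschetz Condition on $H_{\dbar}(N)$ using the subcomplex $C_\Gamma$ together with Theorem \ref{th4.1}), while Lemma \ref{non-HLC} exhibits, for every $t\neq 0$ in $\mathbb{B}(0,\varepsilon)$, a $\Box_{\dbar_t}^{g_t}$-harmonic class $[\eta] \in H^{0,1}_{\dbar_t}(N_t)$ whose image $[\omega^2 \wedge \eta]$ vanishes in $H^{2,3}_{\dbar_t}(N_t)$, so $L^2 \colon H^{0,1}_{\dbar_t} \to H^{2,3}_{\dbar_t}$ fails to be injective, thereby breaking the Hard Lefschetz Condition and (again by Theorem \ref{th4.1}) the $\overline{\partial}_t\,\overline{\partial}_t^\Lambda$-Lemma. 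Since every neighborhood of $0$ contains such $t$, stability fails.

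The main obstacle is entirely absorbed into Theorem 7.6: there is no additional analytic work to perform here. The only point worth verifying carefully is that the notion of ``small deformation'' in the statement matches the family $\{J_t\}$ produced, i.e.\ that $t \mapsto J_t$ is a smooth deformation in the Kodaira--Spencer sense with $J_0 = J_N$; this is immediate from $\varphi_t \to 0$ as $t \to 0$ and the standard construction recalled before Lemma \ref{deformed-complex-structure}.
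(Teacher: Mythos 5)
Your proposal is correct and follows exactly the paper's route: Theorem \ref{7.7} is stated there simply as a corollary of Theorem \ref{7.6}, which packages the Nakamura family $(N,J_t,\omega)$ with the $\overline{\partial}\,\overline{\partial}^\Lambda$-Lemma holding at $t=0$ (via the Hard Lefschetz Condition and Theorem \ref{th4.1}) and failing for $t\neq 0$ (Lemma \ref{non-HLC}). Your additional checks — smoothness of $t\mapsto J_t$ and the $t$-independence of $\omega_t$ — are the right points to verify and are consistent with Lemmas \ref{deformed-complex-structure} and \ref{symmetric-symplectic}.
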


\subsection{Proof of Theorem C}

Theorems \ref{7.6} and \ref{7.7} give the proof of (3). Now it is enough to prove (1) and (2). 

\begin{proof}[Proof of Theorem C $(1)$] Recall that, $\omega$ satisfies the Hard Lefschetz Condition if and only if for every $0\leq k\leq n$,
$$
[\omega^k] : H^{n-k}_{\dbar}(X) \to H^{n+k}_{\dbar}(X)
$$
is an isomorphism, which is equivalent to that the determinant, say $\det\, [\omega^k]$, of the above map is non-zero. Since $\det\, [\omega^k] $ depends smoothly on $\omega$, Theorem C $(1)$ follows.
\end{proof}

\begin{proof}[Proof of Theorem C $(2)$] Let $\dbar_t$ be a smooth family of complex structures on $X_t:=(X,J_t)$. By \cite[Theorem A, B]{AT13}, we know that if $X=(X,J_0)$ satisfies the $\partial_0\dbar_0$-Lemma then
$$
\sum_{p+q=k} {\rm dim}\, H^{p,q}_{\dbar_0}(X) ={\rm dim}\, H^k_{d}(X).
$$
On the other hand, by Fr\"olicher's theorem, we always have
$$
\sum_{p+q=k} {\rm dim}\, H^{p,q}_{\dbar_t}(X_t) \geq {\rm dim}\, H^k_{d}(X).
$$
But notice that ${\rm dim}\, H^k_{d}(X)$ does not depend on $t$ and ${\rm dim}\, H^{p,q}_{\dbar_t}(X_t)$ is upper semi-continuous,  so $ {\rm dim}\, H^{p,q}_{\dbar_t}(X_t)$ does not depend on $t$, which also implies that
$$
{\rm dim}\, H^{p,q}_{\dbar_t}(X_t)={\rm dim}\, H^{n-q,n-p}_{\dbar^\Lambda_t}(X_t)
$$
does not depend on $t$. Assume further that $(X, \omega)$ satisfies the $\dbar_0\dbar_0^\Lambda$-Lemma, then Theorem \ref{d-dbar} gives
$$
\dim H^{p,q}_{BC}(X)+ \dim H^{p,q}_{A}(X) = \dim H_{\dbar_0}^{p,q}(X)+ \dim H_{\dbar_0^\Lambda}^{p,q}(X)
$$
and, 
$$
\dim H^{p,q}_{BC}(X_t)+ \dim H^{p,q}_{A}(X_t) \geq \dim H_{\dbar_t}^{p,q}(X_t)+ \dim H_{\dbar_t^\Lambda}^{p,q}(X_t).
$$
Therefore, by the upper semicontinuity of $t\mapsto\dim H^{p,q}_{BC}(X_t)$ and $t\mapsto\dim H^{p,q}_{A}(X_t)$, we obtain
\begin{eqnarray*}
 \dim H^{p,q}_{BC}(X_0)+ \dim H^{p,q}_{A}(X_0)&\geq& \dim H^{p,q}_{BC}(X_t)+ \dim H^{p,q}_{A}(X_t)\\
                                              &\geq& \dim H^{p,q}_{\dbar}(X_t)+ \dim H^{p,q}_{\dbar^\Lambda}(X_t)\\
                                              &=&\dim H^{p,q}_{\dbar}(X_0)+ \dim H^{p,q}_{\dbar^\Lambda}(X_0)\\
                                              &=& \dim H^{p,q}_{BC}(X_0)+ \dim H^{p,q}_{A}(X_0),
\end{eqnarray*}
that is 
$$
\dim H^{p,q}_{BC}(X_t)+ \dim H^{p,q}_{A}(X_t)= \dim H^{p,q}_{\dbar}(X_t)+ \dim H^{p,q}_{\dbar^\Lambda}(X_t).
$$
Hence, by Theorem \ref{d-dbar}, $(X, J_t, \omega)$ also satisfies the $\dbar_t\dbar_t^\Lambda$-Lemma. By \cite[Corollary 3.7]{AT13},  we also know that $X$ satisfies the   $\partial_t\dbar_t$-Lemma. Thus satisfying both the $\partial\dbar$-Lemma and the $\dbar\,\dbar^\Lambda$-Lemma is a stable property under small deformations of the complex structure.
\end{proof}


\begin{thebibliography}{99}

\bibitem{AK} D. Angella, H. Kasuya, Cohomologies of deformations of solvmanifolds and closedness of some 
properties, {\em North-West. Eur. J. Math.} {\bf 3} (2017), 75--105.

\bibitem{AK19} D. Angella, H. Kasuya, Symplectic Bott--Chern cohomology of solvmanifolds, {\em Journal of Symplectic Geometry} {\bf 17} (2019), 41--91.

\bibitem{AT13} D. Angella, A. Tomassini,  On the $\partial\overline {\partial} $-Lemma and Bott--Chern cohomology, {\it  Invent. math. } {\bf 192} (2013) pp. 71--81.

\bibitem{AT} D. Angella, A. Tomassini, Inequalities \`{a} la Fr\"olicher and cohomological decompositions, {\em J. Noncommut. Geom.} {\bf 9} (2015) pp. 505--542.

\bibitem{Br} J.-L. Brylinski, A differential complex for Poisson manifolds,
{\em J. Differential Geom.} {\bf 28} (1988), no.~1, 93--114.

\bibitem{C} G. R. Cavalcanti, The decomposition of forms and cohomology of generalized complex manifolds, {\em J. Geom. Phys.} {\bf 57} (2006), no. 1, pp. 121--132.

\bibitem{cho} Y. Cho, Hard Lefschetz property of symplectic structures on compact K\"ahler manifolds, {\em Trans. Am. Math.
Soc.} {\bf 368} (2016) pp. 8223--8248.

\bibitem{CF} S. Console, A. Fino, On the de Rham cohomology of solvmanifolds,  {\em Ann. Scuola Norm. Sup.}, {\bf 10} (2011), 801--818.


\bibitem{CFK} S. Console, A. Fino, H. Kasuya,  On de Rham and Dolbeault cohomology of solvmanifolds, {\em Transf. Groups}, {\bf 21} (2016).pp. 653--680.

\bibitem{CFGU2} L. A. Cordero, M. Fern\'andez, A. Gray, L. Ugarte, Nilpotent complex structures on compact nilmanifolds, {\em Rend. Circolo Mat. Palermo}, {\bf 49} (1997), 83--100.

\bibitem{CFGU} L. A. Cordero, M. Fern\'andez, A. Gray, L. Ugarte, Compact nilmanifolds with nilpotent complex structures: Dolbeault cohomology, 
{\em Trans. Amer. Math. Soc.}, {\bf 352} (2000) pp. 5405--5433.

\bibitem{DGMS} P. Deligne, P. Griffiths,
J. Morgan, D. Sullivan, Real Homotopy Theory of K\"ahler
Manifolds, {\em Inventiones Math.} {\bf 29} (1975) pp. 245--274.

\bibitem{Fr} A. Fr\"olicher,  Relations between the cohomology groups of Dolbeault and topological invariants, {\em Proc. Nat.
Acad. Sci. U.S.A.} {\bf 41} (1955), no. 9, pp. 641--644.

\bibitem{FRR}  A. Fino, S. Rollenske, J. Ruppenthal, Dolbeault cohomology of complex nilmanifolds foliated in toroidal groups, 	arXiv:1808.08090 [math.DG]

\bibitem{GHJ} M. Gross, D. Huybrechts, D. Joyce, {\em Calabi-Yau Manifolds and Related Geometries}, Lectures at a Summer School in 
Nordfjordeid Norway, Springer (2001).

\bibitem{K} H. Kasuya, Techniques of computations of Dolbeault cohomology of solvmanifolds, {\em Math. Z.} {\bf 273} (2013), 437--447.

\bibitem{Ma} Y. I. Manin, Three constructions of Frobenius manifolds: a comparative study, {\em Asian J. Math.} {\bf 3} (1999) pp. 179--220.

\bibitem{Merkulov} S. A. Merkulov, Formality of canonical symplectic complexes and Frobenius manifolds, {\em 
Internat. Math. Res. Notices.} {\bf 14} (1998), 727--733.

\bibitem{N} I. Nakamura, Complex parallelisable manifolds and their small deformations, {\em J. Differential Geom.} \textbf{10} (1975), 85--112.

\bibitem{Nomizu54} K. Nomizu, On the cohomology of compact homogeneous spaces of nilpotent Lie groups,  {\em
Ann. of Math.} {\bf 59} (1954), pp. 531--538.

\bibitem{rollenske09b}
S. Rollenske.
\newblock Geometry of nilmanifolds with left-invariant complex structure and
  deformations in the large.
\newblock {\em Proc. Lond. Math. Soc. (3)}, 99(2):425--460, 2009.

\bibitem{rollenske10}
S. Rollenske.
\newblock Dolbeault cohomology of nilmanifolds with left-invariant complex
  structure.
\newblock In {\em Complex and differential geometry}, volume~8 of {\em Springer
  Proc. Math.}, pages 369--392. Springer, Heidelberg, 2011.

\bibitem{RTW} S. Rollenske, A. Tomassini and X. Wang, Vertical--horizontal decomposition of Laplacians and cohomologies of manifolds with trivial tangent bundles,{\em Annali di Matematica Pura ed Applicata} (2019), 1--30.

\bibitem{Sa} Y. Sakane, {\em On compact complex  parallelisable solvmanifolds}, {\em Osaka J. Math.} {\bf 13} (1976), pp. 187--212,

\bibitem{Thurston76} W. P. Thurston,  Some simple examples of symplectic manifolds, {\em Proc. Amer. Math.
Soc.} {\bf 55} (1976), pp. 467--468.

\bibitem{TT}
A. Tomassini, S. Torelli, On Dolbeault formality and small deformations. {\it  International Journal of Mathematics} {\bf 25} (2014) p.1450111 (9 pages).

\bibitem{TY} L.-S. Tseng, S.-T. Yau, Cohomology and Hodge Theory on Symplectic manifolds: I, {\em J. Differ. Geom.} {\bf 91} (2012),
no. 3, pp. 383--416.

\bibitem{TW} A. Tomassini, X. Wang, Some results on the Hard Lefschetz Condition, {\em Internat. J. Math.}  {\bf 29} (2018),  119--149.

\bibitem{Wang17} X. Wang, Notes on variation of Lefschetz star operator and T-Hodge theory, {\tt arXiv:1708.07332}.


\bibitem{Yan} D. Yan, Hodge structure on symplectic manifolds,
{\em Adv. Math.} {\bf 120} (1996), no.~1, 143--154.







\end{thebibliography}
\end{document}